\documentclass[reqno]{amsart}
\usepackage{amssymb, graphicx, cite, url,color}


\vfuzz2pt 
\hfuzz2pt 
\newtheorem{thm}{Theorem}[section]
\newtheorem{cor}[thm]{Corollary}
\newtheorem{lem}[thm]{Lemma}
\newtheorem{prop}[thm]{Proposition}
\theoremstyle{definition}

\theoremstyle{remark}
\newtheorem{rem}[thm]{Remark}
\numberwithin{equation}{section}

\newcommand{\Real}{\mathbb R}
\newcommand{\eps}{\varepsilon}

\renewcommand{\kappa}{\varkappa}
\newcommand{\cA}{\mathcal{A}}
\newcommand{\cL}{\mathcal{L}}
\newcommand{\bV}{\mathbf{V}}
\newcommand{\bW}{\mathbf{W}}

\newcommand{\dom}{\mathop{\rm dom}}
\newcommand{\cP}{\mathcal{P}}
\newcommand{\rR}{R_\mu}
\newcommand{\ess}[1]{\sigma_{\kern-0.08em  e\kern-0.05em s\kern-0.06em s}(#1)}
\newcommand\rep{\textstyle\frac r\eps}

\newcommand{\rRz}{\mathrm{R}_z}

\begin{document}

\title{Membranes with thin and heavy inclusions: asymptotics of spectra}%
\author{Yuriy Golovaty}%

\begin{abstract}
We study the asymptotic behaviour of eigenvalues  of  2D vibrating systems with  mass density perturbed in a vicinity of closed curves. The threshold case in which the resonance frequencies of the  membrane and the frequencies of thin inclusion coincide is investigated. The perturbed eigenvalue problem can be realized as a family of self-adjoint operators acting on varying Hilbert spaces. However the so-called limit operator is non-self-adjoint and possesses the Jordan chains of length $2$. Apart from the lack of self-adjointness, the operator has non-compact resolvent. As a consequence, its spectrum  has a complicated structure, for instance, the spectrum contains a countable set of eigenvalues with infinite multiplicity. The complete asymptotic analysis of  eigenvalues   has been  carried out.
\end{abstract}

\address{Dept. of Mechanics and Mathematics, Ivan Franko National University of Lviv, Universytetska str.1, Lviv, 79000, Ukraine}%
%
\subjclass{35B25, 35P05, 74K15}%
\keywords{Asymptotics of eigenvalues, eigenvalue of infinite multiplicity, quasimode, non-self-adjoint operator, concentrated mass, singular perturbation}%

\maketitle

\section{Introduction and Statement of  Problem}

The mechanical systems with strongly inhomogeneous mass distributions  have become the subject of intensive experimental and theoretical studies
since the time of Poisson and Bessel  \cite[Ch.2]{Sarpkaya2010}, and  a lot of research has been devoted to the analysis of vibrating systems with so-called added masses.
Historically, the ﬁrst relevant mathematical models in classical mechanics go back to the first half of the 20th century (see e.g. \cite{GantmakherKrein1950} and the references given there). Many authors have investigated  properties of strings and rods with the mass densities perturbed by  finite or infinite
sums $\sum M_k\delta(x-x_k)$, where $\delta$ is Dirac's delta-function and $M_k$ is an added mass at the point $x_k$.
Recently,  such models in dimensions two and three with heavy inclusions of a different geometry are widely used not only in mechanics, but in various fields of science and technology such as physics of liquid crystals, physical chemistry of polymers, micelles and microemulsions, molecular theory, cell membrane theory  \cite{BatesFredrickson1990, SensTurner1997, PratibhaParkSmalyukh2010}.  For instance,  cell membranes are known to contain  embedded proteins and various colloidal particles \cite{LadbrookeChapman1969}.

In higher dimensions, the perturbation of mass densities by the $\delta$-functions often leads to incorrect mathematical models, because the formal differential equations which appear have no mathematical meaning. As an example of such ill-posed problem we can  consider the eigenvalue problem for the Laplace operator
\begin{equation*}
  -\Delta u=\lambda (1+M\delta(x))u \text{ \ in }\Omega, \qquad u=0 \text{ \ on }\partial\Omega,
\end{equation*}
where $\Omega$ is a bounded domain in $\Real^3$ containing the origin. The equation has no non-trivial solution, because any such solution $u$ has a singularity at $x=0$ and therefore the product $\delta(x)u(x)=u(0)\delta(x)$ is not defined. The new and at the same time obvious idea was instead to replace the $\delta$-function with its regularization $\eps^{-3}q(x/\eps)$, where $q$ is a  function of compact support, and study the asymptotic behaviour of eigenvalues and eigenfunctions as $\eps\to 0$.
The problem was first investigated by E. S\'{a}nchez-Palencia \cite{MyFirstPaperOfSP, PalenciaHubertBook}, who proved the existence of the so-called local eigenvibrations: the eigenfunctions are significant in a small neighborhood of the origin only.

The model in which the density is  perturbed by $\sum M_k\delta(x-x_k)$  is  not adequate even in the one-dimensional case, when dealing with the large
masses $M_k$. The very heavy inclusions cause a strong local reaction of  vibrating system, but  this phenomenon  can not be described on the discrete set which is a support of the sum of Dirac's functions. The geometry of  small domains where the large  masses are loaded should also have an effect on the form of  eigenvibrations. In \cite{GolovatyNazarovOleinikSoboleva1988},   asymptotic analysis was applied to a spectral problem for the Sturm-Liouville operator with  weight function  of the form
$\rho_\eps(x)=\rho(x)+\eps^{-m}q(x/\eps)$,
where $q$ is a function of compact support and $m\in\Real$. For the case $m=1$ the perturbation is a $\delta$-like sequence, but the most interesting cases of the limit behaviour of eigenfunctions as $\eps\to 0$ are those when the power $m$ is  greater than $1$.

These advanced  models have attracted considerable attention in  the mathema\-ti\-cal literature over the last three  decades (see e.g. \cite{LoboPerez2003} for a review).
The spectral properties of differential operators  with  weight functions having the form
\begin{equation*}
  \rho_\eps(x)=\rho(x)+\sum\eps^{-m_k}q_{k,\eps}(x),
\end{equation*}
where $q_{k,\eps}$ are compactly supported in vicinity of different sets,
have been investigated in numerous articles. We mention here
\cite{GolovatyjNazarovOleinik1990} for the Laplace operator in dimension $3$, \cite{GolovatyTrudy1992, GolovatyLavrenyuk2000} for ordinary differential operators of the fourth order and the biharmonic operator, \cite{MelnikNazarov2001, Melnik2001, ChechkinMelnyk2012} for boundary value problems on junctions of a very complicated geometry,  \cite{GolovatyHrabchak2007, GolovatyHrabchak2010} for the Sturm-Liouville operators on metric graphs.
The spectral properties of strings with rapidly oscillating and periodic densities have been treated in \cite{OleinikShamaevYosifianBook, CastroZuazuaSIAM2000, CastroZuazuaEJAM2000} in the framework of homogenization theory. Another model in which the heavy inclusions were regarded as rigid ones has been studied in \cite{Rybalko2002}.
Since the $90$s of the last century, a series of papers  was  published  concerning  2D and 3D elastic systems with many concentrated masses near the boundary
\cite{LoboPerez1993, LoboPerezSA1995, LoboPerez1995, Chechkin2004, Chechkin2005, NazarovPerez2009, ChechkinPerezYablokova2005, Yablokova2005}. New asymptotic results
for the spectral problems in domains surrounded by thin stiff and heavy bands, when the mass density and  stiffness are simultaneously perturbed in a neighbourhood of the boundary,
were obtained in  \cite{GomezNazarovPerez2006, GomezNazarovPerez2006.1, NazarovPerez2018, GomezNazarovPerez2021}. The Neumann eigenvalue problem for a membrane, almost the entire mass of which is concentrated around the boundary, was studied in \cite{RivaProvenzano2018}.

\begin{figure}[t]
  \centering
  \includegraphics[scale=0.5]{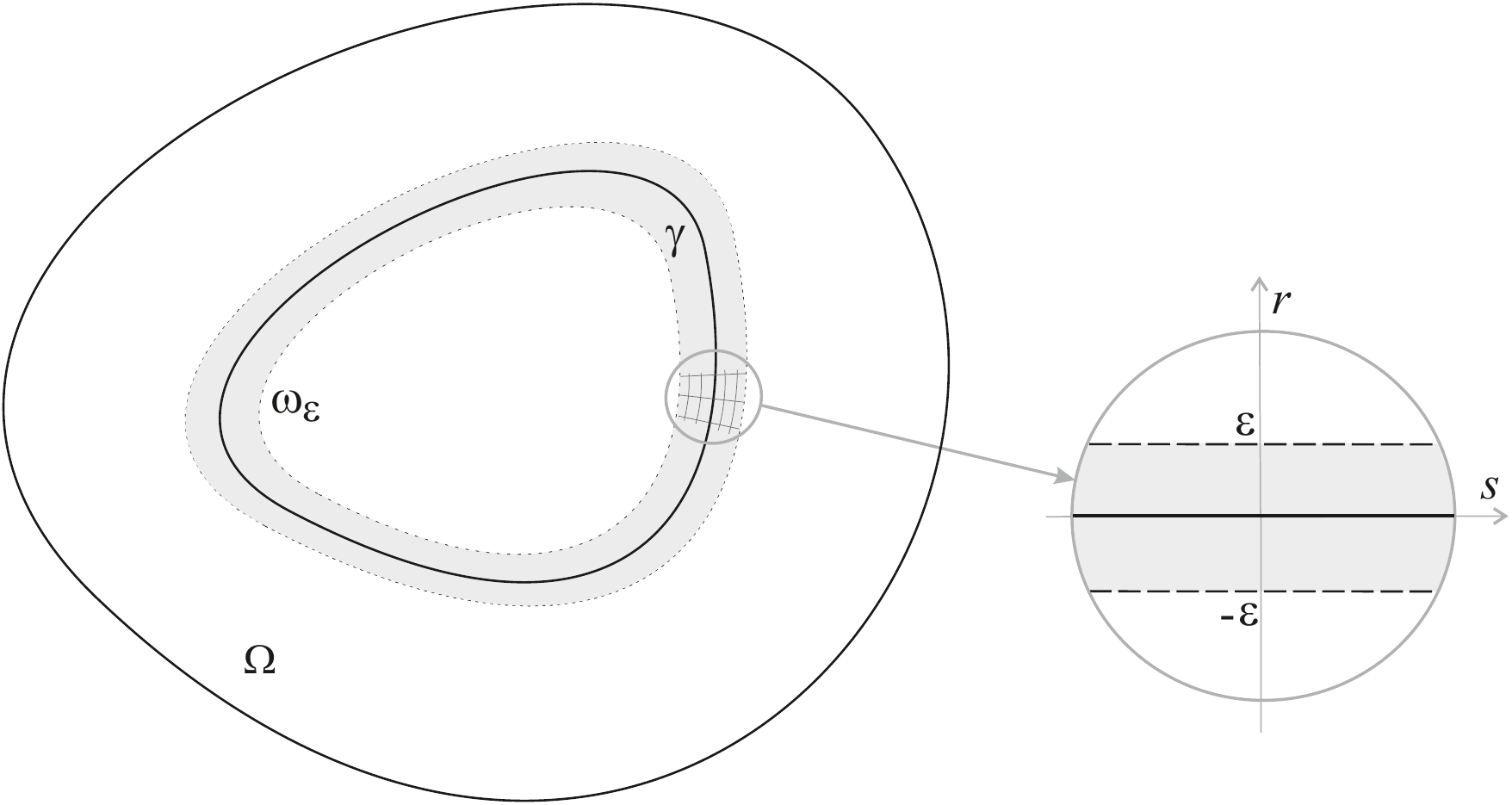}\\
  \caption{Membrane with  heavy and thin inclusion}\label{FigLocalCoords}
\end{figure}

In this paper we study the eigenvibration characteristics of a membrane with  heavy and thin inclusion inside.
Let $\Omega$ be a smooth bounded domain of $\mathbb{R}^2$, and let
$\gamma\subset\Omega$ be a smooth closed curve.  We will denote by $\omega_\eps$ the $\eps$-neighborhood of $\gamma$
(i.e., the union of all open balls of radius $\eps$ around a point on $\gamma$).
Consider $\eps>0$ sufficiently small such that $\omega_\eps\subset\Omega$ and the boundary $\partial \omega_\eps$ is smooth.  Assume  $\rho$ is a smooth uniformly positive functions in $\Omega$, and
\begin{equation}\label{DensityP}
  \rho_\eps(x)=
  \begin{cases}
    \rho(x), & \text{if } x\in \Omega\setminus \omega_\eps, \\
    \eps^{-2}q_\eps(x), & \text{if } x\in \omega_\eps.
  \end{cases}
\end{equation}
To specify   explicit dependence of $q_\eps$ on  $\eps$ we introduce  the Fermi normal coordinates  in $\omega_\eps$  (see Fig.~\ref{FigLocalCoords}).
Let $\alpha\colon [0,|\gamma|)\to \Real^2$ be the unit-speed smooth parametrization of $\gamma$ with the natural parameter $s$, and $|\gamma|$ is the length of $\gamma$. Then the vector $\nu=(-\dot{\alpha}_2, \dot{\alpha}_1)$ is a unit normal on $\gamma$. Set $ x=\alpha(s)+r\nu(s)$ for $(s,r)\in [0,|\gamma|)\times (-\eps, \eps)$,
where $r$ is the signed distance from $x$ to $\gamma$. Suppose that
\begin{equation*}
q_\eps(x)=q({\textstyle\frac r\eps}),
\end{equation*}
where $q$ is a smooth positive function in  $[-1,1]$. We see that the perturbation of mass density varies on the normal direction to $\gamma$ only.
Let us consider the spectral problem
\begin{equation}\label{PEproblem}
  - \Delta u_\eps+au_\eps = \lambda^\eps \rho_\eps u_\eps\quad\text{in }\Omega, \qquad \ell u_\eps=0 \quad \text{on } \partial\Omega,
\end{equation}
where $a\in C^\infty(\overline{\Omega})$ and $\ell v=0$ denotes the Dirichlet, Neumann or Robin boun\-da\-ry conditions on $\partial\Omega$. Our goal is to describe the asymptotic behaviour of the eigenvalues $\lambda^\eps$  of  \eqref{PEproblem} as $\eps\to 0$.

This paper is a continuation of \cite{GolovatyGomezPerezLoboCR2002, GolovatyGomezLoboPerez2004}, where the problem with the Dirichlet boundary condition and more general perturbation $\eps^{-m}q_\eps$ of  mass density has been treated using the variational approach  and the case $m=3$ has been completely investigated by asymptotic methods.
Also, it has been shown that there exist five different limit behaviours for the spectrum and eigenspaces depending on $m$: $m<1$, $m=1$, $1<m<2$, $m=2$ and $m>2$. These cases differ by the form of eigenvibrations and  place on the membrane  where the main part of their ``energy'' is concentrated in the limit.
From the mathematical viewpoint, the difference between the cases is that the spectral parameter in the limit eigenvalue problems appears alternately both in a differential equation on $\Omega$  and in an equation on the strip $\gamma\times (-1,1)$, which is a dilatation of $\omega_\eps$,  and even in coupling conditions on  $\gamma$.

The threshold case $m=2$ is the most difficult and interesting one to analyse, because the spectral parameter is included simultaneously in two differential equations, and the limit spectral problem is associated with  a non-self-adjoint operator.
The main insight of the present work is to exhibit this non-self-adjoint operator,  its spectrum and generalized eigenspaces.

To better understand what happens in dimension $2$, we briefly discuss the similar model for a string. Recently, we have revised some results from \cite{GolovatyNazarovOleinikSoboleva1988} concerning the case $m=2$.  In \cite{Golovaty2020}, we have studied the limiting behavior, as $\eps\to 0$, of eigenvalues
and eigenfunctions  of  the  problem
\begin{gather}\label{PertPrbEq}
        -y_\eps''+Qy_\eps=\lambda^\eps \rho_\eps y_\eps,\quad x\in (a,b),
        \\ \label{PertPrbCondAB}
        y_\eps(a)\cos\alpha+y_\eps'(a)\sin\alpha=0,\quad y_\eps(b)\cos\beta+y_\eps'(b)\sin\beta=0
\end{gather}
with the weight function $\rho_\eps$ given by \eqref{DensityP}, where $\Omega=(a,b)$ is an interval containing the origin, and $\omega_\eps=(-\eps,\eps)$. For each real $\alpha$ and $\beta$ the problem can be associated with  a family of self-adjoint operators $S_\eps$ in the weighted  space $L_2(\rho_\eps, \Omega)$. The operators $S_\eps$ are defined by $S_\eps\phi=\rho_\eps^{-1}(-\phi''+Q\phi)$ on functions $\phi\in W_2^2(\Omega)$ obeying boundary conditions \eqref{PertPrbCondAB}. The spectra of $S_\eps$ are real, discrete and simple.
It is worth noting that the study of operator families acting on  varying  spaces entails some mathematical difficulties. First of all, the question arises how to understand the convergence of such families. Next, if these operators do converge in some sense, does this convergence implies the convergence of their spectra (see, e.g.,  \cite[III.1]{OleinikShamaevYosifianBook}, \cite{MelnikConvergence2001, MugnoloNittkaPost2010}).
By abandoning the self-adjointness, we have realized \eqref{PertPrbEq}, \eqref{PertPrbCondAB} as a family of non-self-adjoint matrix operators $\cA_\eps$ acting on the fixed Hilbert space $L_2(\rho,(a,b))\times L_2( q,(-1,1))$ (see details in \cite{Golovaty2020}). The operators are certainly similar to self-adjoint ones and their  resolvents are compact. Moreover  the spectra of $\cA_\eps$ and $S_\eps$ coincide and the corresponding eigenspaces are isomorphic. It has been proved that $\cA_\eps$ converge in the norm resolvent sense (the resolvents of $\cA_\eps$ converge in the uniform norm as $\eps\to 0$) to the matrix operator $\cA$ associated with  the eigenvalue problem
\begin{align*}
        &-v''+Qv=\lambda \rho v\;\;\text{in }(a,0)\cup(0,b),
        \quad -w''=\lambda  qw\;\; \text{in } (-1,1),
        \\
        &\quad w'(-1)=0,\quad w'(1)=0,\quad v(-0)=w(-1),\quad v(+0)=w(1),
         \\
         &\quad v(a)\cos\alpha+v'(a)\sin\alpha=0,\quad v(b)\cos\beta+v'(b)\sin\beta=0.
\end{align*}
Surprisingly enough,  $\cA$ is not similar to a self-adjoint one, because it possesses multiple eigenvalues with the Jordan chains of length $2$. In view of   \cite[Ch.1]{GohbergKrein1978},\cite{BrambleOsborn1973}, the norm resolvent convergence  $\cA_\eps\to \cA$ implies the ``number-by-number'' convergence of the corresponding eigenvalues and some results on the convergence of eigenspaces.

Problem \eqref{PEproblem} can also be associated with a family of matrix operators acting on the same Hilbert space. However, in two dimensions there is no uniform convergence of resolvents, because the limit operator $\cP$ which can be obtained in the strong resolvent topology has  non-compact resolvent~\cite[Theorem~IV.2.26]{Kato}. It is known that the strong resolvent convergence of operators does not imply the convergence of their spectra. For this reason, we study the convergence of eigenvalues  of \eqref{PEproblem}  without touching on the convergence of corresponding operators. The operator $\cP$ is non-self-adjoint  and possesses nontrivial Jordan chains. In Sec.~\ref{Sect3}, we describe the spectrum of $\cP$  and the generalized eigenspaces. The spectrum is discrete and real; it consists of eigenvalues of three types: \textit{(a)} eigenvalues of finite multiplicity with generalized eigenspaces generated by eigenvectors only; \textit{(b)} infinite-fold eigenvalues  with generalized eigenspaces generated by eigenvectors only; \textit{(c)} infinite-fold eigenvalues  with generalized eigenspaces containing both eigenvectors and generalized eigenvectors.
In fact, this spectrum is a union of  spectra of two operators that correspond to  two different parts of the vibrating system, namely, the membrane clamped along the curve $\gamma$ and the thin heavy inclusion. Using the  method of quasimodes  we describe the asymptotic behaviour of eigenvalues of \eqref{PEproblem} as perturbation of $\sigma(\cP)$. In Sec.~\ref{SectCaseII} and \ref{SectCaseIII}, we prove the existence of a countable number of eigenvalues with the asymptotics $\lambda_{\eps,k}=\lambda+\eps\lambda_1^{(k)}+O(\eps^2)$ as $\eps\to 0$, $k\in \mathbb{N}$, where $\lambda$ is an eigenvalue of type \textit{(b)} or \textit{(c)}. The set of correctors $\{\lambda_1^{(k)}\}_{k=1}^\infty$ is a spectrum of some pseudodifferential operator $N_{\lambda}$ acting in $L_2(\gamma)$. In addition, if $\lambda$ is an  eigenvalue of type (c), there also exists $2d$ eigenvalues with the half-integer power asymptotics $\mu_{\eps,j}^\pm=\lambda\pm\eps^{1/2}\lambda_{1/2,j}+\eps\lambda_{1,j}^\pm+O(\eps^{3/2})$, as $\eps\to 0$, where $d$ is a number of Jordan chains of length $2$  corresponding to $\lambda$. In Sec.~\ref{SectCaseI} we prove that there exist $K$ eigenvalues of \eqref{PEproblem} with the asymptotics  $\lambda_{\eps,k}=\lambda+\eps\lambda_1^{(k)}+O(\eps^2)$, where $\lambda$ is an  eigenvalue of type \textit{(a)} with multiplicity $K$.

\section{Formal construction of the limit operator}

It will be convenient to parameterize the curve $\gamma$ by  points of a circle. It  will allow us not to indicate every time that  functions on $\gamma$  are periodic on $s$.
Let $S$ be the circle of the  length $|\gamma|$. Then $\omega_\eps$ is diffeomorphic to the cylinder $S\times (-\eps, \eps)$. We also set $\omega=S\times (-1, 1)$. Here and subsequently, writing  ``in $\omega$'' after an equation, we mean that the equation is considered in the rectangle  $(0, |\gamma|)\times(-1,1)$ and the corresponding solution is a $|\gamma|$-periodic function on $s$.

Let us denote by $\gamma_t$ the  curve that is obtained from $\gamma$ by flowing for ``time'' $t$ along the normal vector field, i.e.,
  $\gamma_t=\{x\in\Real^2\colon\; x=\alpha(s)+t\nu(s), \; s\in S\}$.
Then the boundary of $\omega_\eps$ consists of two curves $\gamma_{-\eps}$ and $\gamma_{\eps}$.
The domain $\Omega$ is divided by  $\gamma$ into subdomains $\Omega_-$ and $\Omega_+$. Suppose that
$\partial\Omega_-=\partial\Omega\cup\gamma_-$ and $\partial\Omega_+=\gamma_+$, where $\gamma_-$ and $\gamma_+$ are two edges of the cut $\gamma$.
In the sequel,  the  coordinate $r$ increases in the direction from $\Omega^-$ to $\Omega^+$.

For any  $\eps>0$,  problem \eqref{PEproblem} admits a self-adjoint operator rea\-li\-zation in the weighted Lebesgue  space $L^2(\rho_\eps, \Omega)$.
We introduce  operator
$T_\eps$ defined by
\begin{equation*}
T_\eps\phi=\rho_\eps^{-1}(- \Delta\phi+a\phi)
\end{equation*}
on functions $\phi\in W_2^2(\Omega)$ obeying the boundary condition $\ell \phi=0$ on $\partial\Omega$.
Obviously, the spectrum of $T_\eps$ is real and discrete.

We  look for the  approximation to the eigenvalue $\lambda^\eps$ and the corresponding eigenfunction $u_\eps$ of \eqref{PEproblem} in the form
\begin{equation}\label{AsymptExps}
\lambda^\eps\sim\lambda+\cdots,\qquad
u_\eps(x)\sim
    \begin{cases}
        v(x)+\cdots & \text{if } x\in \Omega\setminus \omega_\eps,\\
        w(s,\rep)+\cdots & \text{if } x\in \omega_\eps.
    \end{cases}
\end{equation}
Since  $u_\eps$ solves \eqref{PEproblem} and the domain $\omega_\eps$ shrinks to $\gamma$ as $\eps\to 0$, the function $v$  must be a solution of the equation
$-\Delta v+av=\lambda \rho v$ in $\Omega\setminus\gamma$ that satisfies the boundary condition $\ell v=0$ on $\partial\Omega$. Of course, $v$ must also satisfy  appropriate transmission conditions on $\gamma$.
To find these conditions, we must examine more closely the equation in \eqref{PEproblem} in a vicinity of $\gamma$.

Returning  to the local coordinates $(s,r)$,
we see that the vectors
$ \alpha=(\dot{\alpha}_1, \dot{\alpha}_2)$, $\nu=(-\dot{\alpha}_2, \dot{\alpha}_1)$
give the Frenet frame for $\gamma$.
The Jacobian of transformation
\begin{equation*}
x_1=\dot{\alpha}_1(s)-r\dot{\alpha}_2(s),\quad x_2=\dot{\alpha}_2(s)+r\dot{\alpha}_1(s)
\end{equation*}
 has the form $J(s,r)=1-r \kappa(s)$, where $\kappa=\det(\dot{\alpha},\ddot{\alpha})$ is the signed curvature of $\gamma$.
We see that $J$ is positive for sufficiently small $r$, because  the curvature $\kappa$  is  bounded on $\gamma$.
In addition,   the  Laplace-Beltrami operator becomes
\begin{equation*}
\Delta \phi=J^{-1}\left(\partial_s(J^{-1}\partial_s \phi)+ \partial_r(J\partial_r \phi)\right).
\end{equation*}
In the local coordinates $(s,n)$, where $n=r/\eps$, the Laplacian can be written as
\begin{equation*}
\Delta =(1-\eps n\kappa)^{-1}\left( \eps^{-2}\partial_n
  (1-\eps n\kappa)\partial_n +\partial_s
  \big((1-\eps n\kappa)^{-1}\partial_s\big)\right).
\end{equation*}
From this we readily deduce the  representation
\begin{equation}\label{LaplaceExpansion}
\Delta= \eps^{-2}\partial^2_n-\eps^{-1}\kappa\partial_n
-n\kappa^2\partial_n+\partial^2_s+\eps (2n\kappa\partial^2_s+n\kappa'\partial_s-n^2\kappa^3\partial_n)+\eps^2 P_\eps,
\end{equation}
where $P_\eps$ is a PDE of the second order on $s$ and the first one on $n$ whose coefficients  are uniformly bounded in $\omega$ with respect to $\eps$.
Then using this representation and \eqref{AsymptExps} we obtain the equation $-\partial^2_n w=\lambda qw$ in $\omega$.
Next, the eigenfunction $u_\eps$ as an element of $\dom T_\eps$ satisfies the conditions
\begin{equation}\label{PECoupling}
  [u_\eps]_{-\eps}=0, \quad [u_\eps]_{\eps}=0, \quad [\partial_r u_\eps]_{-\eps}=0, \quad [\partial_r u_\eps]_{\eps}=0,
\end{equation}
where  $[\,\cdot\,]_t$ stands for  the jump of a function across $\gamma_t$. These conditions  imply $v^\pm=w(\,\cdot\,,\pm1)$ and $\partial_nw(\,\cdot\,,\pm1)=0$, where $v^\pm$ denote the one-side traces of $v$ on $\gamma$, i.e., $v^\pm=v|_{\gamma_\pm}$. Combining the latter equalities, we can formally deduce that the pair $(v,w)$ must be an eigenvector of  the spectral problem
\begin{align}\label{P0EqV}
        -&\Delta v+av=\lambda \rho v \quad\text{in }\Omega\setminus\gamma,\qquad \ell v=0\quad\text{on } \partial\Omega,\\\label{P0EqW}
         -&\partial^2_n w=\lambda q w   \:\text{ in } \omega,\quad
        \partial_n w(s,-1)=0,\;\: \partial_n w(s,1)=0,\\\label{P0Coupling}
         &v^-=w(s,-1),\quad v^+=w(s,1),\quad s\in S
\end{align}
with the spectral parameter $\lambda$; \eqref{P0EqV}-\eqref{P0Coupling} will be regarded as the \textit{limit problem}.

We will show that the  problem can be associated with a non-self-adjoint matrix operator. Moreover this operator is not similar to  a self-adjoint one, because it possesses generalized eigenvectors. It means that the limit problem admits no self-adjoint realization. It it worth mentioning that the similar non-self-adjoint problem appeared in \cite{GomezNazarovPerez2006}, where the spectral problem for a membrane surrounded by a thin stiff band was studied. Actually  perturbed  spectral problem \eqref{PEproblem} and the problem in \cite{GomezNazarovPerez2006} give us nontrivial examples of self-adjoint operators acting on varying Hilbert spaces  that converge in some sense to a non-self-adjoint operator.

\section{Properties of the limit operator}\label{Sect3}
We use the following notation. The spectrum and resolvent set of a linear operator $T$ are
denoted by $\sigma(T)$ and $\varrho(T)$, respectively.  Let $T^*$ denote   the adjoint operator of $T$.  For any  $z\in \varrho(T)$, $\rRz(T)=(T-zI)^{-1}$ is the resolvent operator. Here and subsequently, $I$ is an identity operator.

\subsection{Spectrum of the limit operator}
We introduce the  operators
\begin{align*}
&\mathring{A}=\rho ^{-1}(-\Delta+a)\quad \text{in } L_2(\rho,\Omega), \quad
\dom\mathring{A}=\{f\in W_2^2(\Omega\setminus\gamma)\colon \ell f=0\;\; \text{on }\partial\Omega\},\\
&
B=-q^{-1}\partial^2_n \quad \text{in }L_2(q,\omega), \quad
\dom B=\left\{g\in W_2^{2,0}(\omega)\colon \partial_n g(\,\cdot\,,-1)=\partial_n g(\,\cdot\,,1)=0\right\},
\end{align*}
where $W_2^{2,0}(\omega)$ is the anisotropic Sobolev space
\begin{equation*}
  W_2^{2,0}(\omega)=\left\{g\in L_2(\omega)\colon \partial^k_n g\in L_2(\omega) \text{ for } k=1,2\right\}.
\end{equation*}
In the space $\cL=L_2(\rho,\Omega)\times L_2(q,\omega)$  we consider the matrix operator
\begin{multline*}
    \cP=
    \begin{pmatrix}
        \mathring{A} & 0 \\
        0 & B
    \end{pmatrix},\quad
    \dom\cP=\bigl\{(f, g)\in \dom\mathring{A}\times \dom B\colon\\
               f^-=g(\,\cdot\,,-1),\:\; f^+=g(\,\cdot\,,1)\bigr\}.
\end{multline*}
Now \eqref{P0EqV}--\eqref{P0Coupling}  can be written as $\cP u=\lambda u$ with the notation $u=(v,w)^\top$. For simplicity of notation we often write $(v,w)$ instead of $(v,w)^\top$.
Note that the operator $\cP$ is non-self-adjoint. Direct computations show that
\begin{multline*}
    \cP^*=
    \begin{pmatrix}
        A & 0\\
        0 & \mathring{B}
    \end{pmatrix},\quad
    \dom \cP^*=\bigl\{(f, g)\in \dom A\times W_2^{2,0}(\omega)\colon \\
               \partial_r f^-=-\partial_n g(\,\cdot\,,-1),\:\; \partial_r f^+=\partial_n g(\,\cdot\,,1)\bigr\},
\end{multline*}
where $A$ is the restrictions of $\mathring{A}$ to $\dom A=\{f\in \dom\mathring{A}\colon  f=0\; \text{ on } \gamma\}$
and $\mathring{B}$ is the extension of $B$ to the whole space $W_2^{2,0}(\omega)$. Here $ \partial_r f^\pm$ stand for  the one-side traces of the normal derivative of $f$ on $\gamma$.

\begin{lem}\label{LemSpectrumB}
 The spectrum of  $B$ consists of a countable set of real eigenvalues of  infinite multiplicity. Moreover, $\lambda$ belongs to $\sigma(B)$ if and only if  $\lambda$ is an eigenvalue of the Sturm-Liouville problem
  \begin{equation}\label{SturmLiouville}
   y''+\lambda q(n) y=0,\;\; n\in(-1,1), \quad y'(-1)=0,\;\; y'(1)=0.
 \end{equation}
\end{lem}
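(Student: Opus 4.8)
The plan is to exploit the fact that the coefficient $q$ depends on $n$ alone and that $B$ involves only the derivative $\partial_n^2$, so that the variables $s$ and $n$ separate completely. Concretely, I would expand every $g\in L_2(q,\omega)$ in the Fourier basis $\{e_k\}_{k\in\mathbb Z}$ of $L_2(S)$, writing $g(s,n)=\sum_{k\in\mathbb Z}e_k(s)\,g_k(n)$ with coefficients $g_k\in L_2(q,(-1,1))$, the latter space carrying the weight $q(n)\,dn$. Because no $s$-derivatives enter $B$, the membership $g\in W_2^{2,0}(\omega)$ together with the boundary conditions $\partial_n g(\,\cdot\,,\pm1)=0$ translates mode-by-mode into $g_k\in W_2^2(-1,1)$, $g_k'(\pm1)=0$, plus the summability $\sum_k(\norm{g_k}^2+\norm{g_k''}^2)<\infty$; here I would invoke the one-dimensional embedding $W_2^2(-1,1)\hookrightarrow C^1[-1,1]$ to give each trace $g_k'(\pm1)$ a pointwise meaning. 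Under this decomposition $B$ acts diagonally, $Bg=\sum_k e_k\,(Lg_k)$, where $L=-q^{-1}\,d^2/dn^2$ is the one-dimensional Sturm--Liouville operator on $L_2(q,(-1,1))$ with domain $\{y\in W_2^2(-1,1)\colon y'(\pm1)=0\}$. In other words $B=I\otimes L$ on $L_2(S)\otimes L_2(q,(-1,1))\cong L_2(q,\omega)$.

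Next I would analyse the single operator $L$. A routine integration by parts, in which both boundary terms drop out thanks to the Neumann conditions $y'(\pm1)=0$, shows that $L$ is symmetric, and its Neumann realization is in fact self-adjoint by standard Sturm--Liouville theory. Since the interval $(-1,1)$ is bounded and $q$ is smooth and uniformly positive, $L$ has compact resolvent and hence a purely discrete, real spectrum $\lambda_1<\lambda_2<\cdots\to+\infty$ consisting of (simple) eigenvalues. The eigenvalue equation $Ly=\lambda y$ is precisely $y''+\lambda q y=0$ with $y'(\pm1)=0$, so $\sigma(L)$ coincides exactly with the set of eigenvalues of \eqref{SturmLiouville}.

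Finally I would transfer this information back to $B$. For each $z\notin\sigma(L)$ the diagonal action shows that $B-zI$ is boundedly invertible, the resolvent acting mode-by-mode as $\rRz(L)$; conversely, if $z=\lambda_j\in\sigma(L)$ then $B-zI$ fails to be injective. Hence $\sigma(B)=\sigma(L)$, a countable real set of eigenvalues. Moreover, if $\psi_j$ is an eigenfunction of $L$ for $\lambda_j$, then every product $e_k\,\psi_j$, $k\in\mathbb Z$, is an eigenfunction of $B$ for the same $\lambda_j$; these are linearly independent, so each eigenvalue of $B$ has infinite multiplicity --- which is precisely the source of the non-compactness of the resolvent announced in the introduction. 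The step demanding the most care is the mode-by-mode characterisation of $\dom B$: one must verify that the anisotropic regularity defining $W_2^{2,0}(\omega)$ and the boundary traces pass correctly through the Fourier series, and in particular that \emph{no} regularity in $s$ is imposed, since it is exactly the absence of any $s$-smoothing that produces the infinite multiplicities.
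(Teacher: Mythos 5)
Your argument is correct, and it reaches the same conclusion by a genuinely different decomposition. The paper treats $s$ as a frozen parameter: it writes the resolvent equation $(B-\lambda I)\phi=g$ as a one-dimensional boundary value problem in $n$ for almost every $s$, solves it via the Green function of the Sturm--Liouville problem when $\lambda$ is not one of its eigenvalues, and exhibits the infinite-dimensional kernel $\{b(s)y(n)\colon b\in L_2(\gamma)\}$ when it is. You instead Fourier-expand in $s$ and realize $B$ as the tensor product $I\otimes L$ on $L_2(S)\otimes L_2(q,(-1,1))$, reducing everything to the spectral theory of the single regular Sturm--Liouville operator $L$. The two routes rest on the same separation-of-variables observation, but the trade-off is real: the paper's parametric/Green-function argument gives the boundedness of the resolvent off $\sigma(L)$ almost for free and never has to discuss how $\dom B$ decomposes, whereas your tensor-product argument packages the structure more cleanly (making the infinite multiplicity and the non-compactness of $\rRz(B)$ transparent as $I\otimes\rRz(L)$) at the cost of the domain identification $\dom B=\dom(I\otimes L)$, which you correctly single out as the delicate step and which does go through because $W_2^{2,0}(\omega)$ imposes no $s$-regularity and the interpolation bound $\|g_k'\|^2\le C(\|g_k\|^2+\|g_k''\|^2)$ holds uniformly in $k$. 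Either proof establishes the lemma in full.
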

\begin{proof}
Obviously, the operator $B$ is  self-adjoint.
For given $\lambda\in \mathbb{C}$ and $g\in L_2(q,\omega)$, the  equation $(B-\lambda I)\phi=g$ can be treated as the boundary value problem for the ordinary differential equation
\begin{equation*}
         -(\partial^2_n+\lambda q(n)) \phi= q(n)g(s,n)
      \; \text{ in } \omega,\quad
        \partial_n\phi(s,-1)=0,\;\:\partial_n\phi(s,1)=0
\end{equation*}
with the parameter $s\in S$.  If $\lambda$ is not an eigenvalue of \eqref{SturmLiouville},  then the problem has a unique solution $\phi(s,\,\cdot\,)$ for each $g(s,\,\cdot\,)\in L_2(-1,1)$ and almost all $s\in S$. Moreover $\phi$ belongs to $L_2(q,\omega)$  in view of its integral representation via the Green function.
Otherwise, if $\lambda$ is  an eigenvalue of \eqref{SturmLiouville} with eigenfunction $y$,   the problem  is unsolvable for some right-hand sides $g$, because the corresponding homogeneous problem has infinitely many linearly independent  solutions of the form $b(s)y(n)$, where $b\in L_2(\gamma)$.
Therefore the spectrum of $B$ consists of all eigenvalues  of  \eqref{SturmLiouville}, and  the corresponding eigenspaces  are infinite-dimen\-sional.
\end{proof}

\begin{thm}
The operator $\cP$ has real discrete  spectrum. Moreover $$\sigma(\cP)=\sigma(A)\cup\sigma(B).$$
\end{thm}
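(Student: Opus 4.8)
The plan is to prove the two inclusions of the set equality separately, exploiting the one-directional coupling built into $\dom\cP$: in the eigenvalue equation $\cP(f,g)=\lambda(f,g)$ the $B$-component $Bg=\lambda g$ decouples entirely, while the $\mathring A$-component is slaved to it only through the Dirichlet traces $f^\pm=g(\,\cdot\,,\pm1)$. Beforehand I record that $\sigma(A)$ and $\sigma(B)$ are real and discrete. Indeed $A$ is self-adjoint in $L_2(\rho,\Omega)$ and, being the realization of a uniformly elliptic operator on the bounded domain $\Omega\setminus\gamma$ with the Dirichlet condition on $\gamma$ and $\ell f=0$ on $\partial\Omega$, it has compact resolvent; thus $\sigma(A)$ is a real discrete set. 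By Lemma~\ref{LemSpectrumB}, $\sigma(B)$ consists of the eigenvalues of the Sturm--Liouville problem \eqref{SturmLiouville}, which are real, isolated, and accumulate only at $+\infty$. Hence $\sigma(A)\cup\sigma(B)$ is a discrete subset of $\Real$, and the first two assertions of the theorem will follow once the set equality is established.

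For the inclusion $\sigma(A)\cup\sigma(B)\subseteq\sigma(\cP)$ I exhibit eigenvectors. If $\lambda\in\sigma(A)$ with eigenfunction $f_0$, then $f_0=0$ on $\gamma$, so $(f_0,0)\in\dom\cP$ and $\cP(f_0,0)=\lambda(f_0,0)$. If $\lambda\in\sigma(B)$, let $y$ solve \eqref{SturmLiouville}; note $y(\pm1)\neq0$, since $y'(\pm1)=0$ together with $y(\pm1)=0$ would force $y\equiv0$. For a regular function $b$ on $\gamma$ put $g=b(s)\,y(n)$ and let $f$ solve $(\mathring A-\lambda)f=0$ with the Dirichlet data $f^\pm=b\,y(\pm1)$; as $\sigma(A)$ is exactly the union of the Dirichlet spectra on $\Omega_-$ and $\Omega_+$, this inhomogeneous Dirichlet problem is solvable (uniquely when $\lambda\notin\sigma(A)$, while the case $\lambda\in\sigma(A)$ is already covered above). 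Then $(f,g)\in\dom\cP$ is an eigenvector, and letting $b$ range over an infinite-dimensional space produces infinitely many independent ones, so $\lambda\in\sigma(\cP)$ with infinite multiplicity. This is also the mechanism behind the eigenvalue types (a)--(c) announced in the introduction.

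The reverse inclusion $\sigma(\cP)\subseteq\sigma(A)\cup\sigma(B)$ is the core of the proof, and I expect the main obstacle to be a mismatch of trace regularity: a generic $g\in\dom B$ has traces only in $L_2(\gamma)$, whereas the coupling in $\dom\cP$ forces $g(\,\cdot\,,\pm1)=f^\pm\in W_2^{3/2}(\gamma)$, so the naive scheme ``solve $g$ from $G$, then feed its traces into the $f$-equation'' cannot yield an $f\in W_2^2(\Omega\setminus\gamma)$ for general data. I would circumvent this by building the resolvent of the adjoint $\cP^*$, whose coupling is through normal derivatives and is compatible with the anisotropy of $W_2^{2,0}(\omega)$. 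Fix $\lambda\notin\sigma(A)\cup\sigma(B)$ and $(F,G)\in\cL$. Since $\lambda\in\varrho(A)$, solve $(A-\lambda)f=F$ uniquely; the normal traces satisfy $\partial_r f^\pm\in W_2^{1/2}(\gamma)\subset L_2(\gamma)$. Then solve, for almost every $s$, the inhomogeneous Neumann problem $-(\partial_n^2+\lambda q)g=qG$ with $\partial_n g(\,\cdot\,,-1)=-\partial_r f^-$ and $\partial_n g(\,\cdot\,,1)=\partial_r f^+$, which is uniquely solvable precisely because $\lambda\notin\sigma(B)$. As $W_2^{2,0}(\omega)$ demands no regularity in $s$, the $L_2(\gamma)$ data suffice to place $g$ in $W_2^{2,0}(\omega)$, so $(f,g)\in\dom\cP^*$ depends boundedly on $(F,G)$ and $\lambda\in\varrho(\cP^*)$. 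Thus $\mathbb{C}\setminus(\sigma(A)\cup\sigma(B))\subseteq\varrho(\cP^*)$; since $\sigma(A)\cup\sigma(B)\subset\Real$, the standard duality between the spectra of a closed densely defined operator and its adjoint gives $\sigma(\cP)\subseteq\sigma(A)\cup\sigma(B)$. Combined with the second paragraph this yields $\sigma(\cP)=\sigma(A)\cup\sigma(B)$, a real discrete set. The delicate point is exactly this passage through $\cP^*$: it is the favourable normal-derivative coupling of the adjoint, matched to the anisotropic space $W_2^{2,0}(\omega)$, that makes the resolvent construction go through, whereas the direct construction for $\cP$ founders on the trace-regularity gap noted above.
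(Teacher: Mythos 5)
Your proof takes a genuinely different route from the paper's. The paper handles the hard inclusion by constructing a triangular resolvent for $\cP$ itself: it solves $(B-\mu I)w=g$ first, feeds the Dirichlet traces $w(\,\cdot\,,\pm1)$ into the membrane problem through a solution operator $T(\mu)$, and reads off $\sigma(\cP)\subseteq\sigma(A)\cup\sigma(B)$ from boundedness of the resulting operator matrix; the inclusion $\sigma(A)\cup\sigma(B)\subseteq\sigma(\cP)$ is declared obvious. You instead exhibit eigenvectors for the easy inclusion (essentially what the paper does later when describing the eigenspaces) and pass through $\cP^*$ and spectral duality for the hard one. Your diagnosis of the trace-regularity mismatch is a real observation about the paper's argument: $T(\mu)$ is defined there on $W_2^2(\omega)$, yet it is applied to $\rR(B)g$, which lies only in the anisotropic space $W_2^{2,0}(\omega)$ and whose traces on $\gamma$ are merely $L_2$; the pair so produced need not lie in $\dom\cP$ as literally defined. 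Your adjoint construction avoids this, since the Neumann data $\partial_r f^\pm$ obtained from $(A-\bar\lambda)f=F$ need only be in $L_2(\gamma)$ for the fiber-wise Neumann problem to produce $g\in W_2^{2,0}(\omega)$.

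There is, however, a gap at the final duality step, and it interacts with your own observation. The equivalence $\lambda\in\varrho(\cP)\Leftrightarrow\bar\lambda\in\varrho(\cP^*)$ requires $\cP$ to be closed, which you assume but do not verify. In fact, the very obstruction you point out shows that $\cP-\mu$ is \emph{not} surjective on the stated domain for $\mu\in\varrho(A)\cap\varrho(B)$: any preimage of $(F,G)$ must have $g=\rR(B)G$, whose traces are generically not in $W_2^{3/2}(\gamma)$, so no $f\in W_2^2(\Omega\setminus\gamma)$ can match them. If $\cP$ were closed, this would contradict the conclusion you draw from $\bar\mu\in\varrho(\cP^*)$. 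The resolution is that $\cP$ is not closed ($\mathring{A}$ carries no boundary condition on $\gamma$, and $L_2(\gamma)$-convergence of the coupled Dirichlet data does not yield $W_2^2$-control), so what your argument actually establishes, via $\cP^{**}=\overline{\cP}$, is that $\sigma(\overline{\cP})=\sigma(A)\cup\sigma(B)$. That is almost certainly the intended reading of the theorem, and the paper's own proof suffers from the same unacknowledged issue, but your write-up should either state that the conclusion concerns the closure (or a suitably enlarged realization of $\dom\cP$) or supply the missing closedness argument. The remaining ingredients --- the eigenvector constructions, the fiber-wise inversion with its bounds, and the reality and discreteness inherited from the self-adjoint operators $A$ and $B$ --- are correct.
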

\begin{proof}
Let us construct the resolvent of $\cP$ in an explicit form. Given $\mu\in \mathbb{C}$, $f\in L_2(\rho,\Omega)$ and $g\in L_2(q,\omega)$, we write $(\mathring{A}-\mu I)v=f$,  $(B-\mu I)w=g$.
The second equation  admits a unique solution $w=\rR(B)g$ if $\mu\in \varrho(B)$, and then
 $v$ is a solution of the problem
\begin{equation}\label{NonhomogEqU1}
\begin{aligned}
  -&\Delta v+a v-\mu \rho v=\rho f \quad\text{in }\Omega\setminus\gamma,\quad \ell v=0\quad\text{on } \partial\Omega,\\
  &v^-=w(\,\cdot\,,-1), \quad v^+=w(\,\cdot\,,1).
\end{aligned}
\end{equation}
Suppose the  operator
$T(\mu) \colon W_2^2(\omega)\to L_2(\rho,\Omega)$  solves the problem
\begin{equation}\label{HomogenPrU1}
\begin{aligned}
   -&\Delta \phi+a\phi-\mu \rho \phi=0 \quad\text{in }\Omega\setminus\gamma,\quad \ell \phi=0\text{ on } \partial\Omega,\\
  &\phi^-=\psi(\,\cdot\,,-1), \quad \phi^+=\psi(\,\cdot\,,1)
\end{aligned}
\end{equation}
for a given function $\psi\in W_2^2(\omega)$. If $\mu\in \varrho(A)$, then the problem has a unique solution $\phi=T(\mu)\psi$ and hence $T(\mu)$ is bounded.
Next, $v$ can be represented as the sum of a solution of  \eqref{NonhomogEqU1} subject to the homogeneous  boundary  conditions on $\gamma_\pm$ and a solution of \eqref{HomogenPrU1} with $\psi=w$.
Therefore
\begin{equation*}
v=\rR(A)f+T(\mu)w=\rR(A)f+T(\mu)\rR(B)g,
\end{equation*}
provided $\mu\in \varrho(A)\cap \varrho(B)$.
Then the resolvent of $\cP$ can be written in the form
\begin{equation*}
    \rR(\cP)=
    \begin{pmatrix}
        \rR(A) & T(\mu)\rR(B)\\
              0        & \rR(B)
    \end{pmatrix}.
\end{equation*}
The equality $\sigma(\cP)=\sigma(A)\cup\sigma(B)$ follows directly from this representation and the fact that $T(\mu)$ is bounded for $\mu\in \varrho(A)$. Obviously, $\sigma(A)\cup\sigma(B)\subset\sigma(\cP)$.  Suppose, contrary to our claim, that  $\sigma(\cP)$ is not contained in $\sigma(A)\cup\sigma(B)$. Then there exists $\mu\in \sigma(\cP)$ such that $\mu\in \rho(A)\cap\rho(B)$. Hence, the operators $\rR(A)$,  $\rR(B)$ and $T(\mu)$ are bounded, and therefore, the operator $\rR(\cP)$ is also bounded, a contradiction. Clearly,  $\sigma(\cP)$ is real and discrete, because $A$ and $B$ are self-adjoint operators with discrete spectra.
\end{proof}

\begin{cor}
 The operator $\cP$ has non-compact resolvent.
\end{cor}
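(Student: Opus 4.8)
The plan is to deduce the non-compactness of $\rR(\cP)$ directly from the structure of $\sigma(\cP)$ established in the preceding theorem, using the general principle that a compact operator on an infinite-dimensional Hilbert space can have $0$ as its only accumulation point of eigenvalues, and each nonzero eigenvalue must have finite multiplicity. Since $\rR(\cP)=(\cP-zI)^{-1}$ for $z\in\varrho(\cP)$, an eigenvalue $\lambda$ of $\cP$ corresponds to the eigenvalue $(\lambda-z)^{-1}$ of $\rR(\cP)$, and the multiplicities coincide. Therefore it suffices to exhibit a single eigenvalue of $\cP$ with infinite multiplicity.

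The key step is to invoke Lemma~\ref{LemSpectrumB}, which shows that $\sigma(B)$ consists of eigenvalues of infinite multiplicity, together with the theorem's identity $\sigma(\cP)=\sigma(A)\cup\sigma(B)$. First I would pick any $\lambda_0\in\sigma(B)$; by the lemma its eigenspace in $L_2(q,\omega)$ is infinite-dimensional, spanned by functions of the form $b(s)y(n)$ with $b\in L_2(\gamma)$ arbitrary and $y$ a fixed eigenfunction of the Sturm-Liouville problem \eqref{SturmLiouville}. I would then explain that each such eigenvector of $B$ lifts to an eigenvector of $\cP$: one solves the membrane problem \eqref{P0EqV} with boundary data $v^\mp=b(s)y(\mp1)$ to produce $v$, obtaining a genuine eigenvector $(v,w)\in\dom\cP$ at eigenvalue $\lambda_0$, provided $\lambda_0\in\varrho(A)$ (which holds for all but finitely many choices, and one may simply select $\lambda_0\in\sigma(B)\setminus\sigma(A)$). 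This gives $\lambda_0$ infinite geometric multiplicity as an eigenvalue of $\cP$.

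Fixing such a $z\in\varrho(\cP)$, the resolvent $\rR(\cP)$ then has $(\lambda_0-z)^{-1}\neq 0$ as an eigenvalue of infinite multiplicity. A nonzero eigenvalue of infinite multiplicity is incompatible with compactness, since a compact operator restricted to the infinite-dimensional eigenspace would act as a nonzero scalar multiple of the identity, and the identity on an infinite-dimensional space is not compact. This contradiction shows $\rR(\cP)$ is not compact.

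I expect the only genuine subtlety to be the lifting argument in the second step, namely confirming that the infinite-dimensional family of eigenvectors of $B$ actually yields an infinite-dimensional eigenspace inside $\dom\cP$, rather than collapsing under the coupling conditions $f^\mp=g(\,\cdot\,,\mp1)$. This is handled cleanly by choosing $\lambda_0\in\sigma(B)\setminus\sigma(A)$ so that the auxiliary operator $T(\lambda_0)$ from the theorem's proof is bounded and the membrane data $w\mapsto v=T(\lambda_0)w$ determines $v$ uniquely; the map $b\mapsto(v,w)$ is then injective because $w=b(s)y(n)$ already depends injectively on $b$. Everything else is an application of the spectral characterization just proved, so the corollary follows with essentially no further computation.
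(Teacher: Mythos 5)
Your argument is correct in substance but takes a different route from the paper. The paper's proof is a one-line observation on the block structure of the resolvent: since
$\rRz(\cP)$ is upper triangular with $(2,2)$-entry $\rRz(B)$, and $\rRz(B)$ is non-compact by Lemma~\ref{LemSpectrumB} (the operator $-\partial_n^2$ is non-elliptic on the two-dimensional strip $\omega$, so each eigenvalue of $B$ carries an infinite-dimensional eigenspace), the full matrix cannot be compact. You instead work with $\cP$ itself: you lift the infinite-dimensional eigenspace of $B$ at some $\lambda_0$ to an infinite-dimensional eigenspace of $\cP$ via the solvability of \eqref{ProblemVz}, and then invoke the Riesz theory to rule out a nonzero eigenvalue of infinite multiplicity for a compact resolvent. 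Your version is slightly longer but has the merit of anticipating the eigenspace analysis that the paper only carries out in the following subsection; the paper's version avoids the lifting entirely by never leaving the second component.

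One step you should tighten: you justify the choice of $\lambda_0\in\sigma(B)\setminus\sigma(A)$ by saying the condition $\lambda_0\in\varrho(A)$ "holds for all but finitely many choices." That is not obvious: $\sigma(A)$ is an infinite discrete set, and nothing a priori prevents $\sigma(B)\subseteq\sigma(A)$ for particular data $(q;\Omega,\gamma,\rho,a)$. The gap is harmless and easily closed in either of two ways: (i) even when $\lambda_0\in\sigma(A)\cap\sigma(B)$, the eigenspace of $\cP$ is still infinite-dimensional, since only the finitely many directions spanned by $\Psi_1,\dots,\Psi_K$ in $L_2(\gamma)$ obstruct the lifting (this is exactly the content of the basis \eqref{BasisCaseIII}); or (ii) bypass the lifting altogether and argue, as the paper does, that compactness of $\rRz(\cP)$ would force compactness of its diagonal entry $\rRz(B)$, contradicting Lemma~\ref{LemSpectrumB}.
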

\begin{proof}
  The entry  $\rR(B)$ of the matrix $\rR(\cP)$ is a non-compact operator, which is clear from Lemma~\ref{LemSpectrumB}. The main reason for which $\rR(B)$ is not compact is that the second derivative $\partial^2_n$ is a non-elliptic PDE in the two-dimensional domain $\omega$.
\end{proof}

\begin{rem}
The operator $A$  can be represent as the direct sum   $A_-\oplus A_+$, where
\begin{align*}
  &A_- = \rho^{-1}(-\Delta+a),\quad  \dom A_-=\{f\in W_2^2(\Omega_-)\colon \ell f=0\; \text{ on }\partial\Omega,\; f=0\; \text{ on } \gamma\},\\
  &A_+ = \rho^{-1}(-\Delta+a),\quad  \dom A_+=\{f\in W_2^2(\Omega_+)\colon  f=0 \text{ on } \gamma\}.
\end{align*}
Hence
$\sigma(\cP)=\sigma(A_-)\cup\sigma(A_+)\cup\sigma(B)$.
\end{rem}

\subsection{Structure of  generalized eigenspaces}

Let $X_\mu$ be the generalized eigenspa\-ce corresponding to $\mu\in \sigma(\cP)$, i.e., $ X_\mu=\{h\in\cL\colon h\in
\ker(\cP-\mu I)^k \text{ for some } k\in\mathbb{N}\}$. If a vector $h$ belongs to $\ker(\cP-\mu I)^j$ and $(\cP-\mu I)^{j-1}h\neq 0$, one says that $h$ is a generalized eigenvector of rank $j$. The eigenspace $X_\mu^{0}=\ker(\cP-\mu I)$ is a subspace of $X_\mu$; eigenvectors are precisely the generalized eigenvectors of rank $1$.

\subsubsection{Case $\lambda\in \sigma(A)\setminus\sigma(B)$.}
We look first for non-trivial solutions $u=(v,w)$ of  $\cP u=\lambda u$.
Since $\lambda$ does not belong to $\sigma(B)$,  we have  $w=0$.
This in turn implies $v|_\gamma=0$, by \eqref{P0Coupling}. Therefore $v$ must be a eigenvector of $A$ corresponding to $\lambda$. All eigenvalues of $A$ have finite multiplicities.  Assume that $\lambda$ is an eigenvalue of  multiplicity $K$ and  $V_1$,\dots,$V_K$ are the eigenfunctions of $A$ such that
\begin{equation}\label{NormalizeCondVj}
  \int_{\Omega}\rho V_iV_j\,dx=\delta_{i,j},\qquad i,j=1,\ldots,K.
\end{equation}
Here  $\delta_{i,j}$ is the Kronecker symbol. Then $X_\lambda^{0}$ is spanned by
\begin{equation}\label{BasisCaseI}
         u_1=(V_1,0),\;\;u_2=(V_2,0),\;\;\dots\;\;,u_K=(V_K,0).
\end{equation}
The generalized eigenvectors of rank $2$ satisfy  the equation
$(\cP-\lambda I)u_*=u$, where $u$ is an eigenvector of $\cP$. There is no such vectors $u_*=(v_*, w_*)$ in this case, because the second components of all the eigenvectors $u_k$ are zero. In fact,  $B w_*=\lambda w_*$ implies  $w_*=0$. Therefore $(A-\lambda I)v_*=v$, where $v$ is a linear combination of $V_j$. The last equation is unsolvable, since $A$ is self-adjoint.

\subsubsection{Case $\lambda\in \sigma(B)\setminus\sigma(A)$.}
Suppose that $y$ is an eigenfunction of \eqref{SturmLiouville} corresponding to  $\lambda$.   Then $B$ has a countable collection of linearly independent eigenfunctions
$\big\{b_j(s) y(n)\big\}_{j=1}^\infty$, where $\{b_j\}_{j=1}^\infty$ is a basis in $L_2(\gamma)$ consisting of smooth functions. On the other hand, the problem
\begin{equation}\label{ProblemVz}
        -\Delta v+av=\lambda \rho  v \:\text{ in }\Omega\setminus\gamma,\quad \ell v=0\: \text{ on } \partial\Omega,
         \quad v^-=y(-1)b ,\quad v^+= y(1)b
\end{equation}
is uniquely solvable for any $b\in C^\infty(\gamma)$, since $\lambda$ does not belong to $\sigma(A)$. Therefore
$\cP$ possesses the countable set of linearly independent eigenvectors
\begin{equation}\label{BasisCaseII}
u_1=(v_1, b_1 y),\; u_2=(v_2, b_2 y),\;\dots\;,u_j=(v_j, b_j y),\;\dots,
\end{equation}
where $v_j$ is a solution of \eqref{ProblemVz} with $b_j$ in place of $b$. Note that the values $y(\pm1)$  are different from zero and hence all $v_j$ are  non-trivial solutions.
In this case, finding generalized eigenvectors of rank $2$  leads to the  equation $(B-\lambda I)w_*=by$, which is unsolvable for $b\neq 0$.  Hence $X_\lambda= X^0_\lambda$ and $\dim X_\lambda=\infty$.

\subsubsection{Case $\lambda\in \sigma(A)\cap\sigma(B)$.}
Since $\lambda$ belongs to $\sigma(B)$,  any eigenvector of $\cP$ has the form $(v(x), b(s) y(n))$, where $v$ is a solution of \eqref{ProblemVz}. But now we can not solve problem \eqref{ProblemVz} for any function $b$, since $\lambda$ is a point of $\sigma(A)$.

\begin{prop}\label{PropSolvabilityV*}
Let $f\in L_2(\Omega)$ and $b_\pm\in W_2^{3/2}(\gamma)$.
  Assume that $\lambda$ is an eigenvalue of $A$ of  multiplicity $K$ and the corresponding  eigenfunctions $V_1,\dots,V_K$ satisfy \eqref{NormalizeCondVj}. Then the problem
  \begin{equation}\label{ProblemVbF}
        -\Delta v+av=\lambda \rho  v +f\:\text{ in }\Omega\setminus\gamma,\quad \ell v=0\: \text{ on } \partial\Omega,
         \quad v^-=b_-,\quad v^+=b_+
\end{equation}
admits a solution $v\in W_2^2(\Omega\setminus \gamma)$ if and only if
\begin{equation}\label{SolvabilityVbF}
  \int_\gamma \big( b_+\partial_r V_k^+ - b_-\partial_r V_k^-\big)\,d\gamma+\int_\Omega fV_k\,dx=0
\end{equation}
for all $k=1,\dots,K$.
\end{prop}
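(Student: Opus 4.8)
The plan is to strip off the inhomogeneous boundary data on $\gamma$ by a lifting, thereby reducing \eqref{ProblemVbF} to a Fredholm problem for the self-adjoint operator $A$, and then to identify the resulting solvability condition with \eqref{SolvabilityVbF} via Green's formula. Since $W_2^{3/2}(\gamma)$ is exactly the trace space of $W_2^2(\Omega\setminus\gamma)$ on $\gamma$, I first choose a lifting $\Phi\in W_2^2(\Omega\setminus\gamma)$ with $\ell\Phi=0$ on $\partial\Omega$ together with $\Phi^-=b_-$ and $\Phi^+=b_+$ on $\gamma$. Writing $v=\Phi+\tilde v$, problem \eqref{ProblemVbF} becomes equivalent to finding $\tilde v\in\dom A$ satisfying
\[
  (A-\lambda I)\tilde v=\rho^{-1}\tilde f,\qquad \tilde f:=f+\Delta\Phi-a\Phi+\lambda\rho\Phi,
\]
that is, a problem with homogeneous conditions on both $\partial\Omega$ and $\gamma$.

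Because $A$ is self-adjoint with $\lambda$ an eigenvalue of multiplicity $K$ and eigenspace $\spn\{V_1,\dots,V_K\}$, the Fredholm alternative shows that this equation is solvable in $L_2(\rho,\Omega)$ if and only if $\rho^{-1}\tilde f$ is orthogonal in $L_2(\rho,\Omega)$ to each $V_k$. As the inner product carries the weight $\rho$, this condition simplifies to $\int_\Omega\tilde f\,V_k\,dx=0$ for $k=1,\dots,K$; here the normalization \eqref{NormalizeCondVj} ensures that orthogonality to the eigenspace is equivalent to orthogonality to each $V_k$ separately.

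It thus remains to verify that $\int_\Omega\tilde f\,V_k\,dx$ equals the left-hand side of \eqref{SolvabilityVbF}. To this end I apply Green's second identity to the pair $(\Phi,V_k)$ on each subdomain $\Omega_-$ and $\Omega_+$, using the eigenfunction relation $-\Delta V_k+aV_k=\lambda\rho V_k$. The interior contributions $\int_{\Omega\setminus\gamma}\Phi(a-\lambda\rho)V_k\,dx$ cancel exactly against the zeroth-order part of $\tilde f$. The boundary integrals over $\partial\Omega$ vanish, since $\Phi$ and $V_k$ obey the same homogeneous condition $\ell(\,\cdot\,)=0$ there; this is immediate in the Dirichlet and Neumann cases and follows from cancellation of the Robin terms in the mixed case. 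Finally, since $V_k=0$ on $\gamma$, only the terms $-\Phi^\pm\partial_r V_k^\pm$ survive on $\gamma_\pm$, and keeping track of the outward normals (which point along $+\partial_r$ on $\gamma_-$ relative to $\Omega_-$ and along $-\partial_r$ on $\gamma_+$ relative to $\Omega_+$) assembles these into $\int_\gamma\big(b_+\partial_r V_k^+-b_-\partial_r V_k^-\big)\,d\gamma$. Consequently $\int_\Omega\tilde f\,V_k\,dx$ coincides with the left side of \eqref{SolvabilityVbF}, so the Fredholm condition is precisely \eqref{SolvabilityVbF}, and both necessity and sufficiency follow at once.

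I expect the only genuine obstacles to be bookkeeping rather than conceptual: correctly orienting the normal derivatives on the two faces $\gamma_\pm$ of the cut, and checking that the $\partial\Omega$ boundary terms vanish uniformly across the Dirichlet, Neumann and Robin cases. A secondary technical point is the existence of the lifting $\Phi$ with the stated $W_2^2$ regularity, which rests on the hypothesis $b_\pm\in W_2^{3/2}(\gamma)$ together with the smoothness of $\gamma$ and $\partial\Omega$.
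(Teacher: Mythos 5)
Your proof is correct and follows essentially the same route as the paper, which simply invokes the Fredholm alternative for the self-adjoint operator $A$ and obtains \eqref{SolvabilityVbF} by multiplying the equation by $V_k$ and integrating by parts twice; your lifting $\Phi$ and the Green's-identity computation are just the standard way of writing out that one-sentence argument in full, and your sign bookkeeping on $\gamma_\pm$ matches the paper's orientation convention. (The only cosmetic remark: the normalization \eqref{NormalizeCondVj} is not actually needed for the equivalence of orthogonality to the eigenspace with orthogonality to each $V_k$ — any basis would do.)
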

\begin{proof}
This statement is a simple consequence of  the Fredholm alternative for the self-adjoint operator $A$ with compact resolvent.
Conditions \eqref{SolvabilityVbF} can be  obtained
by multiplying the equation in \eqref{ProblemVbF} by $V_1,\dots,V_K$ in turn and then integrating by parts
twice in view of the boundary conditions.
\end{proof}

In view of Proposition~\ref{PropSolvabilityV*}, problem \eqref{ProblemVz} is solvable if and only if $(b,\Psi_k)_{L_2(\gamma)}=0$
for all $k=1,\dots,K$, where
\begin{equation}\label{PsiKset}
\Psi_k=y(1)\,\partial_r V_k^+-y(-1)\,\partial_r V_k^-.
\end{equation}
Let $H_\lambda$ be the subspace in $L_2(\gamma)$ spanned by $\Psi_1,\dots,\Psi_K$. Hence the solvability of \eqref{ProblemVz} is equivalent to the orthogonality of $b$ to $H_\lambda$.

\begin{prop}
Assume $\lambda$ is an eigenvalue of $A=A_-\oplus A_+$  and $k_\pm$ is the multiplicity of $\lambda$ in the spectrum  of $A_\pm$. Then $\dim H_\lambda\geq \max(k_+,k_-)$.
\end{prop}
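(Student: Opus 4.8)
The plan is to exploit the block structure $A=A_-\oplus A_+$ to choose a basis of the $\lambda$-eigenspace adapted to the two subdomains, and then to reduce the assertion to the linear independence of the normal traces of eigenfunctions on $\gamma$, which will come from unique continuation.

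First I would choose the eigenfunctions $V_1,\dots,V_K$ in a special way. Since $A=A_-\oplus A_+$ acts on the orthogonal decomposition $L_2(\rho,\Omega)=L_2(\rho,\Omega_-)\oplus L_2(\rho,\Omega_+)$, the $\lambda$-eigenspace of $A$ is the orthogonal direct sum of the $\lambda$-eigenspaces of $A_-$ and $A_+$; in particular $K=k_-+k_+$. Hence I take $V_1,\dots,V_{k_-}$ to be eigenfunctions of $A_-$ (extended by zero to $\Omega_+$) and $V_{k_-+1},\dots,V_K$ to be eigenfunctions of $A_+$ (extended by zero to $\Omega_-$); orthogonality across the two groups is automatic and within each group I orthonormalize, so \eqref{NormalizeCondVj} holds. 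For $k\le k_-$ the function $V_k$ vanishes on $\Omega_+$, so $\partial_r V_k^+=0$ and \eqref{PsiKset} reduces to $\Psi_k=-y(-1)\,\partial_r V_k^-$; symmetrically, for $k>k_-$ one has $\partial_r V_k^-=0$ and $\Psi_k=y(1)\,\partial_r V_k^+$.

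The heart of the argument is to show that $\{\partial_r V_k^-\}_{k=1}^{k_-}$ are linearly independent on $\gamma$ (and, symmetrically, $\{\partial_r V_k^+\}_{k>k_-}$). Suppose $\sum_{k\le k_-}c_k\,\partial_r V_k^-=0$ on $\gamma$. Then $W=\sum_{k\le k_-}c_k V_k$ solves $-\Delta W+aW=\lambda\rho W$ in $\Omega_-$ and has both vanishing Dirichlet data ($W=0$ on $\gamma$, since $W\in\dom A_-$) and vanishing Neumann data ($\partial_r W=0$ on $\gamma$). By the unique continuation principle for second-order elliptic equations, zero Cauchy data on the smooth curve $\gamma$ forces $W$ to vanish near $\gamma$, and hence $W\equiv 0$ on the connected set $\Omega_-$; linear independence of the $V_k$ then gives $c_k=0$ for all $k$. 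Because $y(-1)\neq 0$ (otherwise $y(-1)=y'(-1)=0$ would force $y\equiv 0$ by ODE uniqueness), the functions $\Psi_1,\dots,\Psi_{k_-}$ are likewise independent, so $\spn\{\Psi_1,\dots,\Psi_{k_-}\}$ has dimension $k_-$; the same reasoning with $y(1)\neq 0$ yields $\dim\spn\{\Psi_{k_-+1},\dots,\Psi_K\}=k_+$. Since $H_\lambda$ contains both subspaces, $\dim H_\lambda\ge\max(k_+,k_-)$.

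The main obstacle is the unique continuation step: one must invoke a genuine unique continuation theorem (of Aronszajn--Cordes type, valid for the Laplacian principal part with bounded lower-order coefficients), since the coefficients $a,\rho$ are only smooth and not assumed analytic, so Holmgren's theorem does not apply directly. A secondary point worth stressing is why only the bound $\max(k_+,k_-)$, rather than $k_-+k_+=K$, can be asserted: the two families $\{\partial_r V_k^-\}$ and $\{\partial_r V_k^+\}$ are each independent, but both live on the same curve $\gamma$ and may satisfy linear relations across the two groups, so independence of their union is not available in general.
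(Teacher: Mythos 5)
Your proof is correct and follows essentially the same route as the paper: choose a basis of the $\lambda$-eigenspace adapted to the splitting $A=A_-\oplus A_+$, observe that each $\Psi_k$ then reduces to a single one-sided normal derivative, and deduce independence of each group of normal traces from the impossibility of an eigenfunction with vanishing Cauchy data on $\gamma$. The only difference is that you spell out the unique continuation argument (and the nonvanishing of $y(\pm1)$) that the paper dismisses with ``but this is impossible,'' which is a welcome elaboration rather than a deviation.
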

\begin{proof}
  Since the operator $A$ is a direct sum of $A_+$ and $A_-$, we can choose a basis in the  eigenspace of $A$ such that $V_1,\dots,V_{k_+}$ are identically equal to zero in $\Omega_-$ and the rest of $k_-$ eigenfunctions $V_{k_++1},\dots,V_{K}$ are identically equal to zero in $\Omega_+$. Note that $k_++k_-=K$.
Then $H_\lambda$ is the linear span of
\begin{equation}\label{BaseInHlambda}
\partial_r V_1^+,\,\ldots\,, \partial_r V_{k_+}^+,\,  \partial_r V_{k_++1}^-,\,\ldots\,,  \partial_r V_{K}^-.
\end{equation}
In general, these normal derivatives are  linearly dependent in $L_2(\gamma)$, but the first of $k_+$ derivatives are always linearly independent as well as the last of $k_-$ ones.
Suppose, contrary to our claim, that the functions $\partial_r V_1^+,\,\ldots\,, \partial_r V_{k_+}^+$ are  linearly dependent in $L_2(\gamma)$. Then there exists an eigenfunction $v$ of the Dirichlet type problem $-\Delta v+av=\lambda \rho  v$ in $\Omega_+$, $v=0$ on $\partial\Omega_+$ such that $\partial_r v=0$ on the whole boundary of $\Omega_+$, but this is impossible. The same conclusion can be drawn for the second part of the normal derivatives.
\end{proof}

Assume $\dim H_\lambda=d$ and choose a basis $\{b_j\}_{j=1}^\infty$ in $L_2(\gamma)$ such that $b_1$,\dots,$b_d$ belong to $H_\lambda$, while $b_k$ for $j>d$ are elements of $H^\perp_\lambda$, and $b_j\in C^\infty(\gamma)$.
Then
\begin{equation}\label{BasisCaseIII}
  (V_1,0),\,\dots,\,(V_K,0),\,(v_{d+1}, b_{d+1}y),\,(v_{d+2}, b_{d+2}y),\dots
\end{equation}
is a countable set of linearly independent eigenvectors of  $\cP$.
Here $v_j$ is a solution of \eqref{ProblemVz} for $b=b_j$ which is orthogonal to the span of $V_1,\dots,V_K$ in $L_2(\rho, \Omega)$.  So $X_\lambda^{0}$ is infinite-dimensional.

In this case, we can also find the generalized eigenvectors $u_*=(v_*, w_*)$ of rank~$2$. They  satisfy  the equation
$(\cP-\lambda)u_*=u$,  where $u=(v, w)$ is an eigenvector of $\cP$. Hence $v_*$ and $w_*$  solve the problem
\begin{align}\label{RootEqV*}
-&\Delta v_*+a v_*=\lambda \rho v_*+\rho v \quad\text{in }\Omega\setminus\gamma,\quad \ell v_*=0\quad\text{on } \partial\Omega,\\\label{RootEqW*}
-&\partial^2_n w_*=\lambda qw_*+qw  \quad\text{in } \omega,
         \qquad\partial_n w_*(\,\cdot\,,-1)=0,\quad\partial_n w_*(\,\cdot\,,1)=0,
\\\label{RootCoupling}
&v_*^-=w_*(\,\cdot\,,-1),\quad v_*^+=w_*(\,\cdot\,,1).
\end{align}
It is easily seen that  \eqref{RootEqV*}--\eqref{RootCoupling} is unsolvable if $w\neq 0$.
Therefore the ge\-ne\-ra\-li\-zed eigenvectors can be  associated  with some non-trivial linear combinations of the  eigenvectors $(V_1,0),\dots,(V_K,0)$ only.
We set $w=0$ and $v=c_1V_1+\cdots+c_KV_K$. Then $w_*(s,n)=b_*(s)y(n)$ and
\begin{align}\label{RootEqVStar}
       -&\Delta v_*+av_*=\lambda \rho v_*+\rho\sum_{j=1}^K c_j V_j\quad\text{in }\Omega\setminus\gamma,\quad \ell v_*=0\quad\text{on } \partial\Omega,\\\label{RootCouplingVStar}
        &v_*^-=y(-1)\,b_*,\quad v_*^+=y(1)\,b_*.
\end{align}
By Proposition~\ref{PropSolvabilityV*} and  \eqref{NormalizeCondVj}, the  problem admits a solution if and only if
\begin{equation*}
c_1=-(b_*,\Psi_1)_{L_2(\gamma)},\,\dots\,, c_K=-(b_*,\Psi_K)_{L_2(\gamma)}.
\end{equation*}
Since $v$ is non-zero, $b_*$ must have a non-trivial projection onto the subspace $H_\lambda$. But $b_1$,\dots,$b_d$ have this property.
Putting $b_*=b_j$ for $j=1,\dots,d$ into \eqref{RootEqVStar}, \eqref{RootCouplingVStar}, we obtain the Jordan chains of length $2$
\begin{equation*}
  (V^{(j)},0)\;\;\mapsto\;\; (v^{(j)}_{*},b_j y),\quad j=1,\ldots,d,
\end{equation*}
where $V^{(j)}=c_1^{(j)}V_1+\cdots+c_K^{(j)}V_j$ with  $c_i^{(j)}=-(b_j,\Psi_i)_{L_2(\gamma)}$.

There are no ge\-ne\-ra\-li\-zed eigenvectors of  rank  $3$, because all the eigenvectors of  rank  $2$ have  nonzero second components and the equation $(B-\lambda I)y_*=b_jy$ is unsolvable for $b_j\neq 0$. Hence the space $X_\lambda$  has the basis consisting of:
\begin{itemize}
  \item[$\circ$] $d$ Jordan chains of length $2$
  \begin{equation}\label{Xlambdabasis1}
    (V^{(1)},0)\longmapsto (v^{(1)}_{*},b_1 y  ),\,\ldots\,, (V^{(d)},0)\longmapsto (v^{(d)}_{*},b_d y  ),
  \end{equation}
  \item[$\circ$]  the series of eigenvectors of the form
  \begin{equation}\label{Xlambdabasis2}
  (v_{d+1}, b_{d+1} y ),\, (v_{d+2}, b_{d+2} y ),\,\dots\,,(v_j, b_j y ),\,\dots\,,
  \end{equation}
  \item[$\circ$]  $K-d$ eigenvectors of the form
  \begin{equation}\label{Xlambdabasis3}
  (V^{(d+1)},0),\,\dots\,,(V^{(K)},0).
  \end{equation}
\end{itemize}
Here $V^{(1)},\dots,V^{(K)}$ are linearly independent eigenfunctions of $A$ corresponding to eigenvalue $\lambda$.

We summarize the information about the spectrum and generalized eigenspaces of the limit operator $\cP$ that we have obtained.

\begin{thm}\label{TheoremSpectrumStructure}
Let $X_\lambda$ be the generalized eigenspace corresponding to $\lambda\in \sigma(\cP)$ and $X_\lambda^{0}$ be the eigenspace.

 \textit{(i)} If $\lambda\in \sigma(A)\setminus \sigma(B)$, then $X_\lambda$ is finite dimensional,  $X_\lambda=X_\lambda^{0}$, and the  basis in $X_\lambda$ is given by \eqref{BasisCaseI}.

\textit{(ii)} The part $\sigma(B)\setminus \sigma(A)$ of $\sigma(\cP)$ consists of eigenvalues $\lambda$ of infinite multiplicity with eigenspaces $X_\lambda=X_\lambda^{0}$ generated by vectors \eqref{BasisCaseII}.

 \textit{(iii)} The part $\sigma(A)\cap\sigma(B)$ is also consists of eigenvalues $\lambda$ of infinite multiplicity, but $X_\lambda\neq X_\lambda^{0}$. Apart from the eigenvectors,
      there exist the generalized  eigenvectors of rank $2$, and the basis in $X_\lambda$ is given by \eqref{Xlambdabasis1}--\eqref{Xlambdabasis3}. The dimension of  factor space $X_\lambda/X_\lambda^{0}$
      does not exceed the multiplicity of $\lambda$ in the spectrum of $A$, namely
      \begin{equation*}
       \max(k_-,k_+)\leq \dim X_\lambda/X_\lambda^{0}\leq k_-+k_+,
      \end{equation*}
 where $k_-$ and $k_+$ are the multiplicity of $\lambda$ in the spectra  of $A_-$ and $A_+$ respectively.
\end{thm}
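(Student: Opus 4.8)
The plan is to assemble the theorem from the three case analyses carried out above, one for each piece of the disjoint partition $\sigma(\cP)=\bigl(\sigma(A)\setminus\sigma(B)\bigr)\cup\bigl(\sigma(B)\setminus\sigma(A)\bigr)\cup\bigl(\sigma(A)\cap\sigma(B)\bigr)$. In every case the strategy is the same: since $\cP$ is block-diagonal, I first determine the eigenvectors by solving $\cP u=\lambda u$ componentwise, and then look for generalized eigenvectors by solving $(\cP-\lambda I)u_*=u$, deciding solvability through the Fredholm alternative for the self-adjoint blocks $A$ and $B$.

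For parts (i) and (ii) I would only need to record the conclusions already obtained. In case (i), $\lambda\notin\sigma(B)$ forces the second component of any eigenvector to vanish, so $v|_\gamma=0$ and $v$ is an eigenvector of $A$; the finite multiplicity of $\lambda$ in $\sigma(A)$ yields the basis \eqref{BasisCaseI}, and no rank-$2$ vector arises because $(A-\lambda I)v_*=v$ is unsolvable for self-adjoint $A$. In case (ii), $\lambda\notin\sigma(A)$ makes the transmission problem \eqref{ProblemVz} uniquely solvable for each $b_j$, producing the countable family \eqref{BasisCaseII}, while $(B-\lambda I)w_*=by$ has no solution for $b\neq0$, so $X_\lambda=X_\lambda^{0}$. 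In both parts the single obstruction comes from the one self-adjoint block whose spectrum contains $\lambda$.

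The substance lies in part (iii). I would first observe that a rank-$2$ generalized eigenvector can only be generated by an eigenvector $u=(v,w)$ with $w=0$: the equation $(B-\lambda I)w_*=w$ requires $w$ to lie in the range of $B-\lambda I$, yet $w=by\in\ker(B-\lambda I)$, so self-adjointness of $B$ forces $w=0$. The source is therefore a combination $v=\sum_{j=1}^{K}c_jV_j$ and $w_*=b_*y$; solvability of \eqref{RootEqVStar}--\eqref{RootCouplingVStar} is governed by Proposition~\ref{PropSolvabilityV*}, which pins down $c_k=-(b_*,\Psi_k)_{L_2(\gamma)}$. A nontrivial source requires $b_*$ to have nonzero projection onto $H_\lambda$, and running $b_*$ through the basis $b_1,\dots,b_d$ of $H_\lambda$ produces exactly $d=\dim H_\lambda$ independent Jordan chains of length $2$. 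No rank-$3$ vector exists, since each rank-$2$ vector again has second component $b_jy\in\ker(B-\lambda I)$, blocking solvability.

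The dimension estimate then follows because the $d$ generalized eigenvectors $(v_*^{(j)},b_jy)$ descend to a basis of $X_\lambda/X_\lambda^{0}$, whence $\dim X_\lambda/X_\lambda^{0}=\dim H_\lambda$; since $H_\lambda=\spn(\Psi_1,\dots,\Psi_K)$ we get $\dim H_\lambda\leq K=k_-+k_+$, while the preceding proposition gives $\dim H_\lambda\geq\max(k_-,k_+)$. I expect the main obstacle to be the bookkeeping in part (iii): checking that the class of a rank-$2$ vector modulo $X_\lambda^{0}$ depends only on the $H_\lambda$-projection of $b_*$, so that the number of independent Jordan chains equals $\dim H_\lambda$ exactly rather than merely being bounded by it, and that \eqref{Xlambdabasis1}--\eqref{Xlambdabasis3} are jointly independent and span $X_\lambda$.
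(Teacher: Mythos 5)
Your proposal is correct and follows essentially the same route as the paper: the theorem is proved there exactly by the preceding case-by-case analysis (componentwise solution of $\cP u=\lambda u$, Proposition~\ref{PropSolvabilityV*} for solvability of the transmission problem, the subspace $H_\lambda$ and the resulting $d$ Jordan chains of length $2$, and the exclusion of rank-$3$ vectors via self-adjointness of $B$). The bookkeeping you flag at the end — that the class of a rank-$2$ vector modulo $X_\lambda^{0}$ depends only on the $H_\lambda$-projection of $b_*$, so that $\dim X_\lambda/X_\lambda^{0}$ equals $\dim H_\lambda$ exactly — is left implicit in the paper as well, so you are not missing anything the paper supplies.
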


\begin{rem}\label{remZeroP}
  The operator $\cP$ has always the eigenvalue $\lambda=0$ of  infinite multiplicity, since $0\in \sigma(B)$.
  The zero eigenvalue is the smallest infinite-fold one, because $B$ is non-negative. All negative eigenvalues, if they exist, have finite multiplicities.
\end{rem}

\section{Asymptotics of eigenvalues  in the case $\lambda_0\in \sigma(B)\setminus \sigma(A)$ }\label{SectCaseII}

We will first focus our attention on perturbations of infinite-fold eigenvalues. In this section, we  construct the asymptotics of countable set of eigenvalues $\lambda_\eps$ of \eqref{PEproblem} that converge to an infinite-fold eigenvalue $\lambda_0$ of  $\cP$ when $\lambda_0\not\in \sigma(A)$.  We look for the asymptotics in the form
\begin{equation}\label{AsympExUI}
\lambda^\eps\sim\lambda_0+\lambda_1\eps+\cdots,\quad u_\eps(x)\sim
    \begin{cases}
        v_0(x)+\eps v_1(x)+\cdots & \text{if } x\in \Omega\setminus \omega_\eps,\\
        w_0(s,\rep)+\eps w_1(s,\rep)+\cdots & \text{if } x\in \omega_\eps,
    \end{cases}
\end{equation}
where $(v_0,w_0)$ is an non-zero element of $X_{\lambda_0}$.
To match the expansions on $\gamma_{\pm\eps}$, we write  $v_k$ in the local coordinates $(s,n)$. Then \eqref{PECoupling} becomes
\begin{gather*}
  w_0(s,\pm1)+\eps w_1(s,\pm1)-
    v_0(s,\pm\eps)-\eps v_1(s,\pm\eps)+\cdots\sim 0,
\\
  \eps^{-1}\partial_n w_0(s,\pm1)+ \partial_n w_1(s,\pm1)-\partial_r v_0(s,\pm\eps)-\eps \partial_r v_1(s,\pm\eps)+\cdots\sim 0.
\end{gather*}
After expanding   $v_k(s,\pm\eps)$ and their derivatives into the Taylor series about $n=\pm 0$ for fixed $s$,  we in particular derive $ w_1(\,\cdot\,,\pm1)=v_1^\pm\pm\partial_r v_0^\pm$ and $\partial_n w_1(\,\cdot\,,\pm1)=\partial_r v_0^\pm$.
Of course, $w_0(\,\cdot\,,\pm1)=v_0^\pm$ and   $\partial_n w_0(\,\cdot\,,\pm1)=0$, since $(v_0,w_0)\in \dom \cP$.

Substituting \eqref{AsympExUI} into  \eqref{PEproblem}, and taking into account  representation \eqref{LaplaceExpansion}, we obtain that the pair $(v_1,w_1)$  solves the problem
\begin{align}\label{EqV1}
        &-\Delta v_1+av_1=\lambda_0 \rho v_1+\lambda_1 \rho v_0\quad\text{in }\Omega\setminus\gamma,\qquad \ell v_1=0\quad\text{on } \partial\Omega,\\\label{EqW1new}
         &-\partial^2_n w_1=\lambda_0 qw_1-\kappa \partial_nw_0+\lambda_1 qw_0
         \quad\text{in } \omega,\\\label{NeumannCondW1}
        &\phantom{-}\partial_n w_1(\,\cdot\,,-1)=\partial_r v_0^-,\quad
        \partial_n w_1(\,\cdot\,,1)=\partial_r v_0^+,\\\label{CouplingV1W1}
         &\phantom{-}v_1^-=w_1(\,\cdot\,,-1)+\partial_r v_0^-,\quad
          v_1^+=w_1(\,\cdot\,,1)-\partial_r v_0^+.
\end{align}
The compatibility condition for \eqref{EqW1new}, \eqref{NeumannCondW1} reads
\begin{equation}\label{CompactibilityW1}
y(-1)\,\partial_r v_0^--y(1)\,\partial_r v_0^+
+   \kappa\int_{-1}^1\partial_n w_0\, y\,dn=   \lambda_1 \int_{-1}^1q w_0y\,dn\quad\text{on }\gamma,
\end{equation}
where $y$ is an eigenfunction of \eqref{SturmLiouville} corresponding to $\lambda_0$.
We have applied the Fredholm alternative for the operator with compact resolvent associated with the Sturm-Liouville problem \eqref{SturmLiouville}. The relation \eqref{CompactibilityW1} can be treated as a spectral equation on $\lambda_1$ for a  pseudodifferential  operator on~$\gamma$.
We introduce the Dirichlet-to-Neumann maps $N^\pm_\lambda$ in $L_2(\gamma)$ as follows. Let $z$ be the solution of  problem
 \begin{equation}\label{DtoNProblemMinus}
   -\Delta z+az=\lambda \rho z\;\;\text{in }\Omega_-,\quad z=\varphi\quad\text{on }\gamma,\;\;\ell z=0\quad\text{on }\partial\Omega
 \end{equation}
for given $\varphi$.  Set $N^-_\lambda\varphi=\partial_r z|_{\gamma}$. We follow \cite{ArendtMazzeo} in assuming that
\begin{equation*}
\dom N^-_\lambda=\big\{\varphi\in L_2(\gamma)\colon z\in W_2^1(\Omega_-) \text{ and } \partial_r z|_{\gamma}\in L_2(\gamma)\big\}.
\end{equation*}
Likewise, we set $N^+_\lambda\varphi=-\partial_r z|_{\gamma}$, where $z$ is a solution of the problem
\begin{equation}\label{DtoNProblemPlus}
  -\Delta z+az=\lambda \rho z\;\;\text{in }\Omega_+,\quad z=\varphi\quad\text{on }\gamma,
\end{equation}
and $\dom N^+_\lambda=\big\{\varphi\in L_2(\gamma)\colon z\in W_2^1(\Omega_+) \text{ and } \partial_r z|_{\gamma}\in L_2(\gamma)\big\}$.
The operators $N^\pm_\lambda$ transform the Dirichlet data on $\gamma$ for solutions of the corresponding boundary value problems into the Neumann ones. Both operators are  well-defined if $\lambda\not\in \sigma(A)$.
The minus sing in definition of $N^+_\lambda$ indicates that the direction of axis $r$  coincides with the inward normal on
$\gamma=\partial \Omega_+$.

It follows  from \eqref{BasisCaseII} that $w_0(s,n)=b_0(s)y(n)$  and $v_0$ is a solution of \eqref{ProblemVz} for $b=b_0$.
Then $\partial_r v_0^-=y(-1)\,N^-_{\lambda_0} b_0$ and $\partial_r v_0^+=-y(1)\,N^+_{\lambda_0} b_0$. Consequently, condition \eqref{CompactibilityW1} reads
\begin{equation*}
y^2(-1)\,N^-_{\lambda_0} b_0+y^2(1)\,N^+_{\lambda_0} b_0
+   \kappa b_0\int_{-1}^1y y'\,dn=   \lambda_1 b_0 \int_{-1}^1q y^2\,dn.
\end{equation*}
Suppose  $\|y\|_{L_2(q,(-1,1) )}=1$
and write $\theta_{\lambda_0}^\pm=y^2(\pm1)$.
Since
\begin{equation*}
 \int_{-1}^1y y'\,dn=\tfrac12\int_{-1}^1 (y^2)'\, dn=\tfrac12\left(\theta_{\lambda_0}^{+}-\theta_{\lambda_0}^{-}\right),
\end{equation*}
we can finally rewrite \eqref{CompactibilityW1}  in the form $ N_{\lambda_0} b_0=\lambda_1 b_0$,
where
\begin{equation}\label{OperatorNlambda}
N_\lambda= \theta_\lambda^{-}N^-_\lambda+\theta_\lambda^{+}N^+_\lambda +\tfrac12\,(\theta_\lambda^{+}-\theta_\lambda^{-})\kappa.
\end{equation}
Note that the values $\theta_{\lambda_0}^-$ and $\theta_{\lambda_0}^+$ do not depend on our choice of  $y$, because all eigenvalues of  the Sturm-Liouville problem are simple.

\begin{prop}
  The operator  $N_\lambda$ is self-adjoint, bounded below and has compact resolvent for all $\lambda\not\in \sigma(A)$.
\end{prop}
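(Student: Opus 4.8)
The plan is to reduce everything to the two Dirichlet-to-Neumann maps $N^-_\lambda$ and $N^+_\lambda$ and to treat them through their quadratic forms. Since the coefficients $a,\rho$ and the parameter $\lambda$ are real (this is the only relevant range, as $\sigma(B)\subset\Real$), and since $\theta_\lambda^\pm=y^2(\pm1)>0$ while $\tfrac12(\theta_\lambda^+-\theta_\lambda^-)\kappa$ is a bounded real multiplication operator, the operator $N_\lambda$ in \eqref{OperatorNlambda} is a strictly positive combination of $N^\pm_\lambda$ modulo a bounded symmetric perturbation. Thus it suffices to show that each $N^\pm_\lambda$ is self-adjoint, bounded below and has compact resolvent, with common form domain $W_2^{1/2}(\gamma)$; these three properties are then inherited by any positive linear combination and are stable under adding a bounded symmetric operator. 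First I would record the form representation: if $z$ solves \eqref{DtoNProblemMinus} with Dirichlet datum $\varphi$, then Green's identity in $\Omega_-$ (whose outward normal on $\gamma$ is $+\partial_r$) gives
\[
  (N^-_\lambda\varphi,\varphi)_{L_2(\gamma)}=\int_{\Omega_-}\big(\abs{\nabla z}^2+a\abs{z}^2-\lambda\rho\abs{z}^2\big)\,dx+R[\varphi],
\]
where $R[\varphi]\ge 0$ is the boundary contribution of the Robin condition on $\partial\Omega$ and vanishes in the Dirichlet and Neumann cases; the analogous identity for $N^+_\lambda$ on $\Omega_+$ carries no $\partial\Omega$-term because $\partial\Omega_+=\gamma$. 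In particular both forms are real-valued, so $N^\pm_\lambda$ are symmetric.

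Next I would fix a real $\lambda_*$ that is sufficiently negative, so that $\lambda_*\notin\sigma(A)$ and the forms $\int_{\Omega_\pm}(\abs{\nabla z}^2+a\abs{z}^2-\lambda_*\rho\abs{z}^2)$ are coercive on $W_2^1(\Omega_\pm)$. For such $\lambda_*$ one has $(N^\pm_{\lambda_*}\varphi,\varphi)\ge c\,\norm{z}^2_{W_2^1(\Omega_\pm)}\ge c'\norm{\varphi}^2_{W_2^{1/2}(\gamma)}$, the second inequality by the trace theorem together with the bounded solvability of the Dirichlet problem. Hence each form is closed, bounded below and strictly positive on $W_2^{1/2}(\gamma)$, and the associated self-adjoint operator is precisely $N^\pm_{\lambda_*}$. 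Since $\gamma$ is a compact one-dimensional manifold, the embedding $W_2^{1/2}(\gamma)\hookrightarrow L_2(\gamma)$ is compact (Rellich), and a semibounded self-adjoint operator whose form domain embeds compactly into the underlying space automatically has compact resolvent. This settles the claim at $\lambda=\lambda_*$.

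To pass to an arbitrary real $\lambda\notin\sigma(A)$ I would argue perturbatively. Writing $\zeta=z_\lambda-z_{\lambda_*}$ for the solutions of \eqref{DtoNProblemMinus} with the same datum $\varphi$, the difference $\zeta$ solves the inhomogeneous problem with zero Dirichlet datum on $\gamma$ and right-hand side $(\lambda-\lambda_*)\rho z_{\lambda_*}\in L_2(\Omega_-)$; since $\partial\Omega_-=\partial\Omega\cup\gamma$ consists of disjoint smooth curves, elliptic regularity yields $\zeta\in W_2^2(\Omega_-)$ with $\norm{\zeta}_{W_2^2(\Omega_-)}\le C\norm{\varphi}_{W_2^{1/2}(\gamma)}$. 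Consequently
\[
  (N^-_\lambda-N^-_{\lambda_*})\varphi=\partial_r\zeta|_\gamma
\]
maps $W_2^{1/2}(\gamma)$ boundedly into $W_2^{1/2}(\gamma)$, and likewise for the $+$ map. Composing with the compact embedding, the associated symmetric form is weakly continuous on $W_2^{1/2}(\gamma)$, i.e.\ relatively form-compact with respect to $N^\pm_{\lambda_*}$. By the KLMN theorem such a relatively form-compact (hence infinitesimally form-bounded) perturbation preserves the form domain, self-adjointness and lower boundedness, while it alters the resolvent only by a compact operator, so compactness of the resolvent persists. Therefore $N^\pm_\lambda$ retain all three properties for every real $\lambda\notin\sigma(A)$, and assembling them as in \eqref{OperatorNlambda} — a strictly positive combination plus the bounded symmetric multiplication by $\tfrac12(\theta_\lambda^+-\theta_\lambda^-)\kappa$ — yields the assertion for $N_\lambda$.

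I expect the main obstacle to be lower boundedness (and hence self-adjointness via KLMN) when $\lambda$ lies above $\inf\sigma(A)$: there the Dirichlet form $\int_{\Omega_\pm}(\abs{\nabla z}^2+a\abs{z}^2-\lambda\rho\abs{z}^2)$ need no longer be positive, so semiboundedness cannot be read off directly and must be extracted from the comparison with the coercive reference $N^\pm_{\lambda_*}$. The delicate point in that comparison is the half-derivative regularity gain for $\zeta$ up to $\gamma$, namely that the Dirichlet problem with an $L_2$ right-hand side and vanishing boundary datum produces a normal trace $\partial_r\zeta|_\gamma$ that is genuinely smoother than a generic element of the form domain; this smoothing is exactly what makes the $\lambda$-dependent part of $N^\pm_\lambda$ relatively form-compact rather than merely bounded.
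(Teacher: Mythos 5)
Your proof is correct and follows the same overall decomposition as the paper: reduce everything to the two Dirichlet-to-Neumann maps, observe that a positive linear combination of operators with the three stated properties retains them, and that the multiplication by the bounded real function $\tfrac12(\theta_\lambda^+-\theta_\lambda^-)\kappa$ is a harmless symmetric perturbation. The difference is in how the key input is obtained: the paper simply cites Arendt--Mazzeo \cite[Th.~3.1]{ArendtMazzeo} for the fact that $N^\pm_\lambda$ are self-adjoint, bounded below and have compact resolvent, whereas you reprove that result from scratch via the quadratic-form representation, coercivity at a sufficiently negative reference point $\lambda_*$, the compact embedding $W_2^{1/2}(\gamma)\hookrightarrow L_2(\gamma)$, and a relatively form-compact perturbation argument to reach general $\lambda\notin\sigma(A)$. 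What your route buys is self-containedness and an explicit identification of the form domain $W_2^{1/2}(\gamma)$, which makes the compactness of the resolvent transparent; what it costs is length and a couple of points that would need care in a full write-up: the Robin boundary term $R[\varphi]$ need not be nonnegative (its sign depends on the Robin coefficient), though this is immaterial since it is infinitesimally form-bounded relative to the $W_2^1(\Omega_-)$ part and so does not affect coercivity at $\lambda_*$; and the operator associated with your closed form must still be matched with $N^\pm_\lambda$ as the paper defines them through the explicit domain in \eqref{DtoNProblemMinus} and its analogue, which is a standard but nontrivial verification (it is essentially the content of the cited theorem). Neither point is a gap in the argument's substance.
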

\begin{proof}
The operators $N^-_\lambda$ and $N^+_\lambda$ are self-adjoint in $L_2(\gamma)$, bounded below and have compact
  resolvents \cite[Th.3.1]{ArendtMazzeo}. The linear combination $\theta_\lambda^{-}N^-_\lambda+\theta_\lambda^{+}N^+_\lambda$ has the same properties, since $\theta_\lambda^{-}$ and $\theta_\lambda^{+}$ are positive.
  Finally, $N_\lambda$ is a perturbation of this linear combination by the operator of multiplication by the bounded function $\tfrac12\,(\theta_\lambda^{+}-\theta_\lambda^{-})\kappa(s)$, which completes the proof.
\end{proof}

Denote by $\{\lambda_1^{(k)}\}_{k=1}^\infty$ the eigenvalues of $N_{\lambda_0}$. So we have calculated  the countable set of correctors $\lambda_1^{(k)}$ in asymptotics \eqref{AsympExUI}. To keep the mathematics rather simple we  suppose that the spectrum of $N_{\lambda_0}$ is simple. It means that the infinite-fold eigenvalue $\lambda_0$ of $\cP$
asymptotically splits into an infinite number of simple eigenvalues of $T_\eps$ under perturbation.
Let $\{b^{(k)}\}_{k=1}^\infty$ be the collection of orthonormal eigenfunctions of $N_{\lambda_0}$.

Let us fix the corrector $\lambda_1^{(k)}$ and  set $w_0=w^{(k)}=b^{(k)}y$. Then $v_0=v^{(k)}$ is a unique solution of \eqref{ProblemVz} with $\lambda_0$ and $b^{(k)}$ in place of $\lambda$ and $b$ respectively.
Such a choice of $(v^{(k)}, w^{(k)})$ ensures that problem \eqref{EqW1new}, \eqref{NeumannCondW1} is solvable, since the compatibility condition
$N_{\lambda_0} b^{(k)}=\lambda_1^{(k)} b^{(k)}$ holds. The problem admits the family of solutions
\begin{equation*}
w_1(s,n)=\hat{w}_1^{(k)}(s,n)+b_1(s)y(n),
\end{equation*}
where $b_1$ is an arbitrary $L_2(\gamma)$-function and $\hat{w}_1^{(k)}$ is the partial solution  subject to the condition
\begin{equation}\label{OrthoCondW1}
\int_{-1}^1 qy \hat{w}_1^{(k)}\,dn=0 \quad\text{ for all } s\in S.
\end{equation}
According to \eqref{EqV1} and \eqref{CouplingV1W1}, with this choice of $w_1$ the function $v_1$ can be written as $v_1=\hat{v}_1^{(k)}+\tilde{v}_1$,
where $\hat{v}_1^{(k)}$ (resp. $\tilde{v}_1$) is a solution of the problem
\begin{equation}\label{U1problemI}
        -\Delta \phi+a \phi=\lambda \rho\phi+\lambda_1^{(k)} \rho v^{(k)}\text{ in }\Omega\setminus\gamma,\quad \ell \phi=0\text{ on } \partial\Omega,
         \quad\phi^-=g_-,\;\; \phi^+=g_+
\end{equation}
with the Dirichlet data $g_\pm=\hat{w}_1^{(k)}(\,\cdot\,,\pm 1)\mp \partial_n v^{(k)}|_{\gamma_\pm}$
(resp. $g_\pm=y(\pm 1) b_1$).
The term $\hat{v}_1^{(k)}$ is uniquely defined, while $\tilde{v}_1$ along with $b_1$ will be fixed below.

Set $a_0(s)=a(s,0)$. The pair $(v_2, w_2)$ in the asymptotics of $u_\eps$  solves the problem
\begin{align}
\label{EqV2}
  &-\Delta v_2+av_2=\lambda_0 \rho v_2+\lambda_1 \rho v_1+\lambda_2 \rho v_0\quad\text{in }\Omega\setminus\gamma,\quad \ell v_2=0\quad \text{on }\partial\Omega,
\\\label{EqW2}
  & -\partial^2_n w_2=\lambda_0 qw_2-(\kappa \partial_n-\lambda_1 q)w_1-
        (\kappa^2 n \partial_n-\partial_s^2+a_0-\lambda_2 q)w_0
         \quad\text{in } \omega,
\\\label{NeumannCondW2}
  &\phantom{-}\partial_n w_2(\,\cdot\,,-1)=\partial_r v_1^--\partial^2_r v_0^-,\quad \partial_n w_2(\,\cdot\,,1)=\partial_r v_1^++\partial^2_r v_0^+,
\\\label{CouplingV2W2}
  &\phantom{-}v_2^-=w_2(\,\cdot\,,-1)+\partial_r v_1^--\partial^2_r v_0^-, \quad v_2^+=w_2(\,\cdot\,,1)-\partial_r v_1^+-\partial^2_r v_0^+.
\end{align}
By reasoning as above, we obtain that the solution $w_2$ exists if and only if
\begin{equation}\label{Nz1-lz1=f1}
  (N_{\lambda_0}-\lambda_1^{(k)}I) b_1=\lambda_2 b^{(k)}+h_1^{(k)},
\end{equation}
where
\begin{multline}\label{H1}
  h_1^{(k)}=
  y(1)\big(\partial_r \hat{v}_1^{(k)}|_{\gamma_+}+\partial^2_r v^{(k)}|_{\gamma_+}\big)-y(-1)
  \big(\partial_r \hat{v}_1^{(k)}|_{\gamma_-}-\partial^2_r v^{(k)}|_{\gamma_-}\big)\\
 -  \int_{-1}^1(\kappa \partial_n-\lambda_1^{(k)} q) y \hat{w}_1^{(k)}\,dn -
  \int_{-1}^1 \big(\kappa^2n y y'\, b^{(k)}-y^2(\partial_s^2b^{(k)}-a_0b^{(k)}) \big)\,dn.
\end{multline}
Since $\lambda_1^{(k)}$ is a simple eigenvalue of $N_{\lambda_0}$, the second corrector $\lambda_2=\lambda_2^{(k)}$ can be uniquely calculated from the solvability condition for equation \eqref{Nz1-lz1=f1}
\begin{equation*}
  \lambda_2^{(k)}=-(h_1^{(k)},b^{(k)})_{L_2(\gamma)}.
\end{equation*}
Moreover, there exists a unique solution $b_1^{(k)}$ of \eqref{Nz1-lz1=f1} satisfying the condition $(b_1^{(k)},b^{(k)})_{L_2(\gamma)}=0$.
Hence $v_1^{(k)}$ is now uniquely defined by choosing $\tilde{v}_1$ to be a solution of \eqref{U1problemI} with the Dirichlet data $g_\pm=y(\pm1) b_1^{(k)}$.
The compatibility condition \eqref{Nz1-lz1=f1} allows us to solve problems \eqref{EqW2}, \eqref{NeumannCondW2} and \eqref{EqV2}, \eqref{CouplingV2W2} one after another and to find solutions $w_2^{(k)}$ and $v_2^{(k)}$.

The process used to ﬁnd the leading terms of asymptotic expansions \eqref{AsympExUI} can be continued to systematically construct all other terms.
For fixed $\lambda_0\in \sigma(B)\setminus \sigma(A)$ we consider the countable collection of formal approximations to eigenvalues and eigenfunctions of the perturbed problem
\begin{gather}\label{hatLambdaEps}
\Lambda_\eps^{(k)} =\lambda_0+\lambda_1^{(k)}\eps+\lambda_2^{(k)}\eps^2+\lambda_3^{(k)}\eps^3,\\\label{hatUeps}
\hat{u}_\eps^{(k)} (x)=
    \begin{cases}
        v_0^{(k)}(x)+\eps v_1^{(k)}(x)+\eps^2 v_2^{(k)}(x)+\eps^3 v_3^{(k)}(x) & \text{if } x\in \Omega\setminus \omega_\eps,\\
w_0^{(k)}(s,\rep)+\eps w_1^{(k)}(s,\rep)+\eps^2 w_2^{(k)}(s,\rep)+\eps^3 w_3^{(k)}(s,\rep) & \text{if } x\in \omega_\eps.
    \end{cases}
\end{gather}

Let $H$ be a Hilbert space with  norm $\|\cdot\|$ and let $T$ be a self-adjoint operator in $H$ with a domain $\dom T$.
We say a pair $(\mu, u)\in \Real\times \dom T$ is a \textit{quasimode} of  $T$ with the accuracy $\delta$, if $\|u\|=1$  and
$\|(T-\mu I)u\|\leq\delta$. Of course, if $\delta=0,$ then $\mu$ is an eigenvalue of $T$ with the eigenvector $u$.

\begin{prop}[\hglue-0.1pt{\cite[p.139]{PDEVinitiSpringer}}]\label{LemQuasimodes}
   Assume $(\mu, u)$ is a quasimode of $T$ with accuracy $\delta>0$ and  the spectrum of $T$ is discrete in  the interval
$[\mu-\delta, \mu+\delta]$. Then there exists an eigenvalue $\mu_*$ of  $T$ such that $|\mu_*-\mu|\leq\delta$.
\end{prop}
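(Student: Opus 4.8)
The plan is to argue directly from the spectral theorem for the self-adjoint operator $T$, the single analytic ingredient being a lower bound on $\|(T-\mu I)u\|$ in terms of the distance from $\mu$ to the spectrum. Since $T$ is self-adjoint, $\sigma(T)\subset\Real$ and $T$ admits a projection-valued spectral measure $E$ with $T=\int_{\sigma(T)}\lambda\,dE_\lambda$. For any $u\in\dom T$ one has $\|(T-\mu I)u\|^2=\int_{\sigma(T)}|\lambda-\mu|^2\,d\langle E_\lambda u,u\rangle$, and since the integrand is bounded below by $\operatorname{dist}(\mu,\sigma(T))^2$ on the support of the measure, this yields the estimate $\|(T-\mu I)u\|\ge \operatorname{dist}(\mu,\sigma(T))\,\|u\|$. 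Equivalently, when $\mu\in\varrho(T)$ this is the familiar identity $\|(T-\mu I)^{-1}\|=\operatorname{dist}(\mu,\sigma(T))^{-1}$.

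Next I would feed the quasimode into this inequality. Because $\|u\|=1$ and $\|(T-\mu I)u\|\le\delta$ by hypothesis, the bound immediately gives $\operatorname{dist}(\mu,\sigma(T))\le\delta$. Hence $\sigma(T)$ meets the closed interval $[\mu-\delta,\mu+\delta]$, so there is a spectral point $\mu_*\in\sigma(T)$ with $|\mu_*-\mu|\le\delta$. Finally I would invoke the discreteness hypothesis to upgrade this spectral point to a genuine eigenvalue: by assumption $\sigma(T)$ is discrete in $[\mu-\delta,\mu+\delta]$, i.e.\ it consists there of isolated points of finite multiplicity, and for a self-adjoint operator every isolated point of the spectrum is an eigenvalue. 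Thus the point $\mu_*$ produced above is an eigenvalue of $T$ with $|\mu_*-\mu|\le\delta$, which is exactly the assertion.

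I do not anticipate any serious obstacle; the entire argument rests on the elementary spectral-measure estimate above. The only step deserving care is the last one, where discreteness is precisely what guarantees that the nearest point of $\sigma(T)$ within the interval is a true eigenvalue rather than, for instance, a point of continuous spectrum — without that hypothesis one would obtain only a point of $\sigma(T)$ at distance at most $\delta$ from $\mu$, not an eigenvalue.
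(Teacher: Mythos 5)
Your argument is correct and complete: the spectral-measure estimate $\|(T-\mu I)u\|\ge\operatorname{dist}(\mu,\sigma(T))\,\|u\|$, combined with $\|u\|=1$ and the quasimode bound, forces a spectral point into $[\mu-\delta,\mu+\delta]$, and the discreteness hypothesis (together with the fact that isolated spectral points of a self-adjoint operator are eigenvalues) upgrades it to an eigenvalue. The paper itself gives no proof of this proposition --- it only cites the reference --- and what you wrote is the standard argument that the cited source uses, so there is nothing to compare against and nothing to add.
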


In order to construct the quasimodes of $T_\eps$, we must modify the approximations of eigenfunctions, because
they do not  belong to $\dom T_\eps$. By construction, the functions $v_j^{(k)}$ and $w_j^{(k)}$ are smooth, since the coefficients $a$, $\rho$ and $q$ are smooth.
But, in general,  $\hat{u}_\eps^{(k)}$ have jump discontinuities on  $\partial\omega_\eps$.

\begin{figure}[h]
  \centering
  \includegraphics[scale=2]{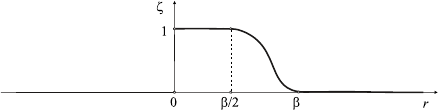}\\
  \caption{Plot of the function $\zeta$.}\label{FigPlotZeta}
\end{figure}

Let us define the function $\zeta$ plotted in Fig.~\ref{FigPlotZeta}.  This function is smooth outside the origin,  $\zeta(r)=1$ for $r\in [0,\beta/2]$ and $\zeta(r)=0$ in the set $\Real\setminus [0,\beta)$.
We  can choose $\beta$ small enough such that the local coordinates $(s,r)$ are well defined in $\omega_{2\beta}$. Set
\begin{multline}\label{EtaCorrector}
  \eta_\eps^{(k)}=\big([\hat{u}_\eps^{(k)}]_{\eps}+[\partial_r\hat{u}_\eps^{(k)}]_{\eps}\,
  (r-\eps)\big)\,\zeta(r-\eps)\\
  +\big([\hat{u}_\eps^{(k)}]_{-\eps}+[\partial_r\hat{u}_\eps^{(k)}]_{-\eps}\,(r+\eps)\big)
  \,\zeta(-r-\eps).
\end{multline}
The function is different from zero in the set $\omega_{\beta+\eps}\setminus\omega_\eps$ only. And it is easy to check  that $\eta_\eps^{(k)}$ and $\partial_r\eta_\eps^{(k)}$ have the same jumps across the boundary of $\omega_\eps$ as $\hat{u}_\eps^{(k)}$ and $\partial_r\hat{u}_\eps^{(k)}$ respectively.
Therefore the function $U_\eps^{(k)}=\hat{u}_\eps^{(k)}-\eta_\eps^{(k)}$
belongs to the domain of $T_\eps$. Moreover we have not changed $\hat{u}_\eps^{(k)}$ too much, since
\begin{equation}\label{EtaEpsEstimate}
  \sup_{x\in \Omega\setminus \overline{\omega}_\eps}\big(|\eta_\eps^{(k)}(x)|+|\Delta\eta_\eps^{(k)}(x)|\big)\leq c\eps^3.
\end{equation}
It follows from the explicit formula for $\eta_\eps^{(k)}$ and  smallness of
 jumps of $\hat{u}_\eps^{(k)}$, $\partial_r\hat{u}_\eps^{(k)}$ across $\partial\omega_\eps$. All the jumps are of order $O(\eps^3)$, as $\eps\to 0$, by construction.

We will henceforth write $n_\eps^{(k)}=\|U_\eps^{(k)}\|^{-1}_{L^2(\rho_\eps,\Omega)}$.

\begin{lem}\label{LemmaQuasimodesTeps}
 For each $k\in \mathbf{N}$, the pair $(\Lambda_\eps^{(k)}, n_\eps^{(k)}U_\eps^{(k)})$ constructed above is a quasimode of the operator $T_\eps$ with the accuracy $O(\eps^2)$ as $\eps\to 0$.
\end{lem}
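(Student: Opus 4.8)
The plan is to verify the two defining properties of a quasimode for the self-adjoint operator $T_\eps$ in $L^2(\rho_\eps,\Omega)$: that $n_\eps^{(k)}U_\eps^{(k)}$ is normalized, which holds by the very definition of $n_\eps^{(k)}$, and that the residual $\|(T_\eps-\Lambda_\eps^{(k)}I)(n_\eps^{(k)}U_\eps^{(k)})\|_{L^2(\rho_\eps,\Omega)}$ is $O(\eps^2)$. Writing $(T_\eps-\Lambda_\eps^{(k)})\phi=\rho_\eps^{-1}(-\Delta\phi+a\phi-\Lambda_\eps^{(k)}\rho_\eps\phi)$, I would estimate the residual separately on the inner domain $\omega_\eps$ and on the outer domain $\Omega\setminus\omega_\eps$, since the weight $\rho_\eps$ and the form of $U_\eps^{(k)}$ differ there. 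A preliminary and crucial step is to fix the order of the normalizing constant: because the mass is concentrated, the dominant contribution to $\|U_\eps^{(k)}\|^2_{L^2(\rho_\eps,\Omega)}$ comes from $\omega_\eps$, where $\rho_\eps=\eps^{-2}q$ and $U_\eps^{(k)}\approx w_0^{(k)}=b^{(k)}y$; passing to the coordinates $(s,n)$ with volume element $\eps(1-\eps n\kappa)\,ds\,dn$ gives $\|U_\eps^{(k)}\|^2_{L^2(\rho_\eps,\Omega)}=\eps^{-1}\|b^{(k)}\|^2_{L_2(\gamma)}\|y\|^2_{L_2(q,(-1,1))}(1+O(\eps))=\eps^{-1}(1+O(\eps))$, so that $n_\eps^{(k)}=\eps^{1/2}(1+O(\eps))$.

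On $\omega_\eps$ I would insert the expansion \eqref{LaplaceExpansion} of the Laplacian in the coordinates $(s,n)$ and expand $a$ about $n=0$, collecting the operator $-\Delta+a-\eps^{-2}q\Lambda_\eps^{(k)}$ applied to $W=\sum_{j=0}^{3}\eps^j w_j^{(k)}$ in powers of $\eps$. The coefficients of $\eps^{-2}$, $\eps^{-1}$, $\eps^{0}$ and $\eps^{1}$ vanish identically: these are precisely the equations \eqref{P0EqW}, \eqref{EqW1new}, \eqref{EqW2} and the analogous order-$\eps$ equation satisfied by $w_3^{(k)}$ that is used to determine $\lambda_3^{(k)}$. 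Hence the inner residual of $-\Delta+a-\eps^{-2}q\Lambda_\eps^{(k)}$ is $O(\eps^2)$ pointwise, and after multiplying by $\rho_\eps^{-1}=\eps^2 q^{-1}$ and integrating with the weight $\rho_\eps$ and volume element $\eps(1-\eps n\kappa)\,ds\,dn$, I obtain $\|(T_\eps-\Lambda_\eps^{(k)})U_\eps^{(k)}\|_{L^2(\rho_\eps,\omega_\eps)}=O(\eps^{7/2})$.

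On $\Omega\setminus\omega_\eps$, where $\rho_\eps=\rho$ and $U_\eps^{(k)}=\sum_{j=0}^{3}\eps^j v_j^{(k)}-\eta_\eps^{(k)}$, I would use \eqref{EqV1}, \eqref{EqV2} and the order-$\eps^3$ analogue to check that $(-\Delta+a-\Lambda_\eps^{(k)}\rho)\sum_{j=0}^{3}\eps^j v_j^{(k)}=O(\eps^4)$ pointwise, whose $L^2(\rho,\Omega\setminus\omega_\eps)$ norm over this bounded region is $O(\eps^4)$; the contribution of $\eta_\eps^{(k)}$ is controlled by \eqref{EtaEpsEstimate} together with the fact that $\eta_\eps^{(k)}$ is supported in the band $\omega_{\beta+\eps}\setminus\omega_\eps$ of measure $O(1)$, which yields $O(\eps^3)$. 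Combining the two regions gives $\|(T_\eps-\Lambda_\eps^{(k)})U_\eps^{(k)}\|_{L^2(\rho_\eps,\Omega)}=O(\eps^3)$, and multiplying by $n_\eps^{(k)}=O(\eps^{1/2})$ produces the accuracy $O(\eps^{7/2})\subset O(\eps^2)$, as claimed; note that $n_\eps^{(k)}U_\eps^{(k)}\in\dom T_\eps$ by the choice of $\eta_\eps^{(k)}$ and $\Lambda_\eps^{(k)}\in\Real$, so the pair is admissible.

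The main obstacle is the correct bookkeeping on $\omega_\eps$: the weight $\rho_\eps=\eps^{-2}q$ is singular and $\|U_\eps^{(k)}\|_{L^2(\rho_\eps,\Omega)}$ blows up like $\eps^{-1/2}$, so one must track every power of $\eps$ arising from the singular weight, the volume element and the normalization simultaneously. In particular, verifying that the four leading coefficients in the inner expansion annihilate — which hinges on the cascade \eqref{P0EqW}, \eqref{EqW1new}, \eqref{EqW2} and the order-$\eps$ equation for $w_3^{(k)}$ being exactly the compatibility relations enforced during the construction — is the delicate part; once the pointwise order $O(\eps^2)$ of the inner residual is secured, the weighted-norm estimate and the normalization are routine.
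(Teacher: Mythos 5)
Your proof follows the same route as the paper's: split the residual $(T_\eps-\Lambda_\eps^{(k)})U_\eps^{(k)}$ into the contributions from $\omega_\eps$ and $\Omega\setminus\omega_\eps$, use the cascade of equations satisfied by $w_0,\dots,w_3$ (via the expansion \eqref{LaplaceExpansion}) and by $v_0,\dots,v_3$ together with the corrector estimate \eqref{EtaEpsEstimate} to bound each piece, and combine with the bound $n_\eps^{(k)}=O(\eps^{1/2})$ coming from the mass concentrated in $\omega_\eps$. Your bookkeeping of the factor $\rho_\eps^{-1}=\eps^{2}q^{-1}$ on the inner region is in fact slightly more careful than the paper's and yields the sharper accuracy $O(\eps^{7/2})$, which of course implies the stated $O(\eps^{2})$.
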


\begin{proof}
For simplicity we shall drop the index $k$ in the sequel and write $(\Lambda_\eps, U_\eps)$, $v_j$, $w_j$ and $\lambda_j$ instead of $(\Lambda_\eps^{(k)}, U_\eps^{(k)})$, $v_j^{(k)}$, $w_j^{(k)}$ and $\lambda_j^{(k)}$. Set $F_\eps=(T_\eps-\Lambda_\eps)U_\eps$. Then we have
\begin{equation*}
 F_\eps=(-\Delta+a-\Lambda_\eps \rho)U_\eps=\sum_{j=0}^3\eps^j(-\Delta v_j+av_j-\rho\sum_{i=0}^{j}\lambda_i  v_{j-i})
     +\Delta\eta_\eps-a\eta_\eps+\Lambda_\eps \rho\eta_\eps
\end{equation*}
outside $\omega_\eps$. From our choice of $v_j$, we derive that the first sum in the right-hand side vanishes.
Therefore $\sup_{x\in\Omega\setminus \omega_\eps}|F_\eps(x)|\leq c_1\eps^3$, because of \eqref{EtaEpsEstimate}.
Applying representation \eqref{LaplaceExpansion} of the Laplace operator in $\omega$, we have
\begin{equation*}
\Delta -a+\eps^{-2}\Lambda_\eps q
= \sum_{j=0}^3\eps^{j-2} p_j+\eps^2 P_\eps-a_\eps,
\end{equation*}
where $p_0=\partial^2_n+\lambda_0 q$, $p_1=-\kappa \partial_n+\lambda_1 q$, $p_2=-n\kappa^2 \partial_n+\partial_s^2-a(s,0)+\lambda_2 q$,
\begin{equation*}
  p_3=2n\kappa\partial^2_s+n\kappa'\partial_s-n^2\kappa^3\partial_n-na(s,0)\partial_n+\lambda_3 q,
\end{equation*}
and $a_\eps(s,n)=a(s,\eps n)-a(s,0)-\eps n\partial_na(s,0)$.
Then, in the  domain $\omega_\eps$, we obtain
\begin{equation*}
  F_\eps=(-\Delta+a-\eps^{-2}\Lambda_\eps q)U_\eps=\sum_{j=0}^3\eps^{j-2} \sum_{i=0}^{j-i}p_iw_{j-i}-\eps^2 P_\eps U_\eps+a_\eps=-\eps^2 P_\eps U_\eps+a_\eps,
\end{equation*}
since the functions $w_0,\ldots,w_3$ solve the equations $\sum_{i=0}^{j-i}p_iw_{j-i}=0$, by construction.
Consequently $\sup_{x\in\omega_\eps}|F_\eps(x)|=\eps^2 \sup_{x\in\omega_\eps}|P_\eps U_\eps(x)|+\sup_{x\in\omega_\eps}|a_\eps(x)|\leq c_2\eps^2$.
Hence
\begin{equation*}
  \|F_\eps\|^2_{L^2(\rho_\eps,\Omega)}\leq \int_{\Omega\setminus \omega_\eps}\rho|F_\eps|^2\,dx+\eps^{-2}\int_{\omega_\eps}q|F_\eps|^2\,dx\leq c_3\eps^6+c_4|\omega_\eps|\eps^2\leq c_5\eps^3.
\end{equation*}

The main contribution  to the $L^2(\rho_\eps,\Omega)$-norm of $U_\eps$ is given by the integral $\eps^{-2}\int_{\omega_\eps}q|w_0|^2\,dx$ which is of order $O(\eps^{-1})$, as $\eps\to 0$. Hence $\|U_\eps\|_{L^2(\rho_\eps,\Omega)}\geq c_6\eps^{-1/2}$ for $\eps$ small enough, i.e., $n_\eps\leq c_7\eps^{1/2}$.
This bound along with
$ \|F_\eps\|_{L^2(\rho_\eps,\Omega)}\leq c_8\eps^{3/2}$ yields
\begin{equation}\label{EstQuasimodes}
 \|(T_\eps-\Lambda_\eps I)(n_\eps U_\eps)\|_{L^2(\rho_\eps,\Omega)}=n_\eps\|F_\eps\|_{L^2(\rho_\eps,\Omega)}\leq c_9 \eps^{2},
\end{equation}
and this is precisely the assertion of the lemma.
\end{proof}

\begin{thm}\label{ThmCaseII}
Suppose that $\lambda_0$ is an eigenvalue of the limit operator $\cP$ such that $\lambda_0\in\sigma(B)\setminus \sigma(A)$.
Assume the spectrum $\{\lambda_1^{(k)}\}_{k\in \mathbb{N}}$ of the  operator $N_{\lambda_0}$ is simple.
Then there exists a countable set of eigenvalues $\lambda_{\eps,k}$, $k\in \mathbb{N}$, in the spectrum of $T_\eps$ that
converge to $\lambda_0$  and admit the asymptotics
\begin{equation}\label{lambdaEpsAsymp}
\lambda_{\eps,k}=\lambda_0+\eps\lambda_1^{(k)}+O(\eps^2), \quad\text{as }  \eps\to 0.
\end{equation}
\end{thm}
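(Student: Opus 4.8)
The plan is to convert the formal expansions into genuine spectral information via the abstract quasimode principle, so that the only analytic input beyond what is already proved is the extraction of the leading asymptotics and a separation argument. I would fix $k\in\mathbb{N}$ and invoke Lemma~\ref{LemmaQuasimodesTeps}, which supplies the quasimode $(\Lambda_\eps^{(k)}, n_\eps^{(k)}U_\eps^{(k)})$ of the self-adjoint operator $T_\eps$ with accuracy $\delta_\eps^{(k)}=c_k\eps^2$, where $c_k$ is the constant from \eqref{EstQuasimodes}; by construction $\|n_\eps^{(k)}U_\eps^{(k)}\|_{L^2(\rho_\eps,\Omega)}=1$. Since the spectrum of $T_\eps$ is real and discrete, it is in particular discrete in $[\Lambda_\eps^{(k)}-\delta_\eps^{(k)},\,\Lambda_\eps^{(k)}+\delta_\eps^{(k)}]$, so Proposition~\ref{LemQuasimodes} applies and furnishes an eigenvalue $\lambda_{\eps,k}\in\sigma(T_\eps)$ with
\begin{equation*}
  |\lambda_{\eps,k}-\Lambda_\eps^{(k)}|\le c_k\eps^2.
\end{equation*}
Recalling \eqref{hatLambdaEps}, $\Lambda_\eps^{(k)}=\lambda_0+\lambda_1^{(k)}\eps+\lambda_2^{(k)}\eps^2+\lambda_3^{(k)}\eps^3$, so the higher-order terms together with the quasimode error are $O(\eps^2)$, which gives both $\lambda_{\eps,k}\to\lambda_0$ and the asymptotics \eqref{lambdaEpsAsymp}.

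It then remains to check that distinct indices yield distinct eigenvalues, and here the simplicity of $\sigma(N_{\lambda_0})$ enters. For $k\neq m$ we have $\lambda_1^{(k)}\neq\lambda_1^{(m)}$, whence
\begin{equation*}
  |\Lambda_\eps^{(k)}-\Lambda_\eps^{(m)}|=|\lambda_1^{(k)}-\lambda_1^{(m)}|\,\eps+O(\eps^2)\ge\tfrac12|\lambda_1^{(k)}-\lambda_1^{(m)}|\,\eps
\end{equation*}
for all sufficiently small $\eps$. The right-hand side is of order $\eps$, while the localisation radii $c_k\eps^2$ and $c_m\eps^2$ are of order $\eps^2$; hence the intervals $[\Lambda_\eps^{(k)}-c_k\eps^2,\Lambda_\eps^{(k)}+c_k\eps^2]$ and $[\Lambda_\eps^{(m)}-c_m\eps^2,\Lambda_\eps^{(m)}+c_m\eps^2]$ are disjoint for small $\eps$, forcing $\lambda_{\eps,k}\neq\lambda_{\eps,m}$. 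Since $N_{\lambda_0}$ is bounded below with compact resolvent, its eigenvalues accumulate only at $+\infty$, so each pairwise gap $|\lambda_1^{(k)}-\lambda_1^{(m)}|$ is strictly positive and the separation is never vacuous; ranging over $k\in\mathbb{N}$ thus produces the asserted countable family.

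The principal analytic difficulty has in fact already been discharged in Lemma~\ref{LemmaQuasimodesTeps}, namely the competition between the smallness of the residual $F_\eps=(T_\eps-\Lambda_\eps)U_\eps$ and the blow-up of the normalising factor $n_\eps$: because the mass is concentrated in $\omega_\eps$, one has $\|U_\eps\|_{L^2(\rho_\eps,\Omega)}\sim\eps^{-1/2}$, so only the sharp bound $\|F_\eps\|_{L^2(\rho_\eps,\Omega)}=O(\eps^{3/2})$ yields the accuracy $O(\eps^2)$ after normalisation. What remains genuinely delicate at the level of the theorem is the uniformity of the separation argument: both $c_k$ and $|\lambda_1^{(k)}-\lambda_1^{(m)}|$ depend on the indices, so distinctness of $\lambda_{\eps,k}$ from every $\lambda_{\eps,m}$ with $m\neq k$ is obtained, for each fixed $k$, only below an $\eps$-threshold depending on $k$. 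It is precisely the accumulation of $\sigma(N_{\lambda_0})$ solely at $+\infty$ that keeps these thresholds consistent and secures a bona fide countable set of eigenvalues of $T_\eps$ carrying the stated expansion.
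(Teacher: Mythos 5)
Your proof is correct and follows essentially the same route as the paper: Lemma~\ref{LemmaQuasimodesTeps} plus Proposition~\ref{LemQuasimodes} produce an eigenvalue within $O(\eps^2)$ of each $\Lambda_\eps^{(k)}$, and simplicity of $\sigma(N_{\lambda_0})$ separates them since the $O(\eps)$ gaps dominate the $O(\eps^2)$ localisation radii. The only cosmetic difference is bookkeeping: the paper fixes $I$ first and uses a single constant $c_I$ valid for $k\le I$ (so all $I$ eigenvalues are pairwise distinct below one threshold $\eps_I$), whereas you argue pairwise with thresholds depending on $(k,m)$ --- both yield the same statement, though your closing appeal to the accumulation of $\sigma(N_{\lambda_0})$ at $+\infty$ is not really needed (and would not by itself control the growth of the constants $c_m$); what matters is only that each fixed pairwise gap is positive.
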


\begin{proof}
Fix $I\in \mathbb{N}$.
In view of  Proposition~\ref{LemQuasimodes} and Lemma~\ref{LemmaQuasimodesTeps}, there exist  eigenva\-lues $\lambda_{\eps,1},\dots,\lambda_{\eps,I}$ of $T_\eps$ and a constant $c_I$ such that
\begin{equation}\label{LambdaEpsIEsts}
  |\lambda_{\eps,k}-\lambda_0-\eps\lambda_1^{(k)}|\leq c_I\eps^2
\end{equation}
for $k=1,2,\dots,I$ and $\eps$ small enough. Moreover these eigenvalues are pairwise different. Suppose, contrary to our claim, that some eigenvalue $\lambda_\eps$ of $T_\eps$ simultaneously satisfies two  estimates \eqref{LambdaEpsIEsts}, for example when $k=1$ and $k=2$. Then $\lambda_\eps-\lambda_0-\eps\lambda_1^{(1)}\leq c_I\eps^2$ and $\lambda_0+\eps\lambda_1^{(2)}-\lambda_\eps\leq c_I\eps^2$. Adding these inequalities yields $\lambda_1^{(2)}-\lambda_1^{(1)}\leq 2c_I\eps$
for all $\eps$ small enough. But this is impossible, because the spectrum of  $N_{\lambda_0}$ is simple and therefore $\lambda_1^{(1)}<\lambda_1^{(2)}$. Write
\begin{equation*}
  \delta_I^\eps=\eps \max_{k=1,\dots,I}|\lambda_1^{(k)}|+c_I\eps^2.
\end{equation*}
Hence the interval $[\lambda_0-\delta_I^\eps,\lambda_0+\delta_I^\eps]$ contains at least $I$ eigenvalues of  $T_\eps$ that possess the asymptotics \eqref{lambdaEpsAsymp}.
\end{proof}

\begin{figure}[t]
  \centering
  \includegraphics[scale=0.9]{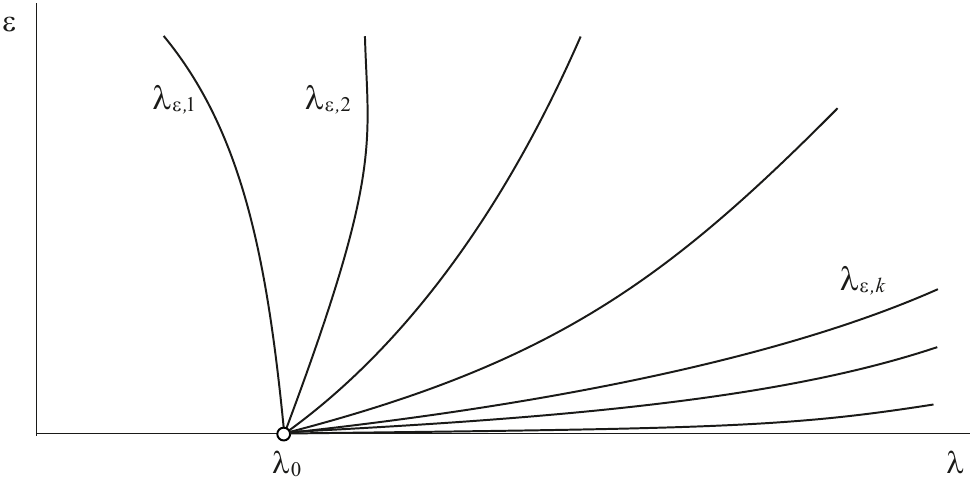}\\
  \caption{Bifurcation of the eigenvalue $\lambda_0\in \sigma(B)\setminus \sigma(A)$.}\label{FigBifurcation}
\end{figure}

\section{Asymptotics of eigenvalues  in the case $\lambda_0\in \sigma(A)\cap\sigma(B)$} \label{SectCaseIII}
In view of Theorem~\ref{TheoremSpectrumStructure}, if the set $\sigma(A)\cap\sigma(B)$ is non-empty, the  operator $\cP$ possesses generalized  eigenvectors of rank $2$. This requires changing the structure of  asymptotics
\begin{align}\label{AsympExLambdaIII}
\lambda^\eps&\sim\lambda_0+\lambda_{1/2}\eps^{1/2}+\lambda_1\eps+\cdots,\\\label{AsympExUIII}
u_\eps(x)&\sim
    \begin{cases}
        v_0(x)+\eps^{1/2}v_{1/2}(x)+\eps v_1(x)+\cdots & \text{for } x\in \Omega\setminus \omega_\eps,\\
        w_0(s,\rep)+\eps^{1/2}w_{1/2}(s,\rep)+\eps w_1(s,\rep)+\cdots & \text{for } x\in \omega_\eps.
    \end{cases}
\end{align}
Here $(v_0,w_0)$ is an non-zero element of the eigenspace spanned by vectors \eqref{BasisCaseIII}. If $y$ is a normalized eigenfunction of \eqref{SturmLiouville} corresponding to  $\lambda_0$, then $w_0(s,n)=b_0(s)y(n)$ for some $b_0\in L_2(\gamma)$.
As above, substituting the series in \eqref{PEproblem} in particular yields
\begin{align}\label{EqV1/2}
        &-\Delta v_{1/2}+a v_{1/2}=\lambda_0 \rho v_{1/2}+\lambda_{1/2} \rho v_0 \text{ in }\Omega\setminus\gamma,\quad \ell v_{1/2}=0\text{ on } \partial\Omega,\\\label{EqW1/2}
         & -\partial^2_s w_{1/2}=\lambda_0 qw_{1/2}+\lambda_{1/2} qw_0
         \quad\text{in } \omega,\\\label{NeumannCondW1/2}
        &\phantom{-}\partial_n w_{1/2}(\,\cdot\,,-1)=0,\quad
         \partial_n w_{1/2}(\,\cdot\,,1)=0,\\\label{CouplingV1/2W1/2}
         &\phantom{-}v_{1/2}^-=w_{1/2}(\,\cdot\,,-1),\quad
          v_{1/2}^+=w_{1/2}(\,\cdot\,,1).
\end{align}
Since \eqref{EqW1/2}, \eqref{NeumannCondW1/2} can be written as $(B-\lambda_0)w_{1/2}=\lambda_{1/2}b_0y$ and $B$ is self-adjoint, a solution $w_{1/2}$  exists  if only if
\begin{equation}\label{lambda12b0}
\lambda_{1/2} b_0=0.
\end{equation}
 This condition is a branching point of our algorithm.

\subsection*{Integer power asymptotics: case $w_0\neq 0$}\label{SecIPA}
If $b_0$ is a non-zero function, then $\lambda_{1/2}=0$ by necessity. Then problem \eqref{EqV1/2}--\eqref{CouplingV1/2W1/2} turns into
the limit problem \eqref{P0EqV}--\eqref{P0Coupling} and $(v_{1/2},w_{1/2}) \in X^0_{\lambda_0}$.
Without loss of generality we assume that  $(v_{1/2},w_{1/2})=0$, i.e.,  this vector  is absorbed by the leading term  of the asymptotics. Moreover, a trivial verification shows that all terms $\lambda_{j/2}$, $(v_{j/2},w_{j/2})$ with half-integer indexes in \eqref{AsympExLambdaIII}, \eqref{AsympExUIII}
can be treated as equal to zero. In this case,  we come back to the integer power asymptotics  \eqref{AsympExUI}, but the construction of quasimodes needs a slight modification.
The next terms $\lambda_1$, $(v_1, w_1)$ solve  problem \eqref{EqV1}--\eqref{CouplingV1W1}, and therefore  condition \eqref{CompactibilityW1} must hold. But now we cannot rewrite \eqref{CompactibilityW1} in the form of the spectral equation for $N_{\lambda_0}$, because this operator  is not defined for $\lambda_0\in \sigma(A)$.

We will ``extend'' $N_\lambda$ to $\sigma(A)$  by means of a restriction of the space in which it acts. A slight change in the proof of Proposition~\ref{PropSolvabilityV*} actually shows that both problems \eqref{DtoNProblemMinus} and \eqref{DtoNProblemPlus} are solvable for $\lambda \in \sigma(A)$ if the function $\varphi$ in the boundary condition on $\gamma$ is orthogonal to the subspace $H_\lambda\subset L_2(\gamma)$ spanned by functions \eqref{PsiKset}. Although  solutions of the problems, in this case, are ambiguously determined, we can subject them to some additional condition. Namely, there exists a unique solution $z$ of \eqref{DtoNProblemMinus} (or \eqref{DtoNProblemPlus})  satisfying the condition $\partial_r z|_{\gamma}\perp H_\lambda$.
So we can define the Dirichlet-to-Neumann map   $M^-_\lambda\varphi=\partial_r z|_{\gamma}$ on $H_\lambda^\perp$, where $z$ is a solution of \eqref{DtoNProblemMinus} belonging to $H_\lambda^\perp$. Similarly, we define $ M^+_\lambda\varphi=-\partial_r z|_{\gamma}$,  where $z\in H_\lambda^\perp$ is a solution of \eqref{DtoNProblemPlus}.
Both operators are well defined for  $\lambda\in \mathbb{C}$. In fact, we have
\begin{equation*}
 M^\pm_\lambda=(I-P_\lambda)N^\pm_\lambda(I-P_\lambda),
\end{equation*}
 where $P_\lambda$ is the orthogonal projector onto the subspace $H_\lambda$.
Moreover, $M^\pm_\lambda=N^\pm_\lambda$ for  $\lambda\in\mathbb{C}\setminus\sigma(A)$, since the subspace $H_\lambda$ is trivial in this case.

Now solvability condition \eqref{CompactibilityW1} becomes $M_{\lambda_0}b_0=\lambda_1 b_0$, where
$$
M_\lambda= \theta_\lambda^- M^-_\lambda+\theta_\lambda^+ M^+_\lambda+\tfrac12(\theta_\lambda^{+}
-\theta_\lambda^{-})\kappa.
$$
The pseudodifferential operator $M_\lambda$ has the same properties as $N_\lambda$.
Suppose that the spectrum of $M_{\lambda_0}$ is simple and $\lambda_1^{(k)}$ is the $k$-th eigenvalue of $M_\lambda$ with the normalized eigenfunction $b^{(k)}$.
We set $w_0=b^{(k)}y$.

Assume $\lambda_0$ is an $K$-fold eigenvalue of $A$ with the eigenspace $\mathcal{V}_{\lambda_0}$. In order to shorten notation, we introduce the vector $\bV=(V_1,\dots,V_K)$, where the eigenfunctions $V_k$ are subject to condition \eqref{NormalizeCondVj}.
Then the leading term $v_0=v_0^{(k)}$ in \eqref{AsympExUIII} solves \eqref{ProblemVz} for $\lambda=\lambda_0$ and $b=b^{(k)}$ and has the form
\begin{equation*}
 v_0^{(k)}(x)=\hat{v}_0^{(k)}(x)+\alpha_0\cdot \bV(x),
\end{equation*}
where $\alpha_0$ is an arbitrary vector in $\Real^K$ and $\hat{v}_0^{(k)}$ is a partial solution of \eqref{ProblemVz} such that  $\hat{v}_0^{(k)}\perp\mathcal{V}_{\lambda_0}$.
The dot denotes the scalar product in $\Real^K$.
To determine $v_0^{(k)}$ uniquely, we should calculate  $\alpha_0$. Next, we have
\begin{equation}\label{w1kRepresII}
  w_1(s,n)=\hat{w}_1^{(k)}(s,n)+b_1(s)y(n),
\end{equation}
where $\hat{w}_1^{(k)}$ is a partial solution of \eqref{EqW1new}, \eqref{NeumannCondW1} subject to condition \eqref{OrthoCondW1}, and $b_1$ is an arbitrary $L_2(\gamma)$-function. Assume that  $b_1=g_0+g$, where $g_0\in H_{\lambda_0}$ and $g\in H_{\lambda_0}^\perp$.
Then problem \eqref{EqV1}, \eqref{CouplingV1W1} for $v_1$ admits  solutions
\begin{equation}\label{v1kRepresII}
  v_1=\phi+\psi+\alpha_1\cdot \bV,\quad \alpha_1\in \Real^K,
\end{equation}
where $\phi$ and $\psi$ solve  the problems
\begin{align*}
       &-\Delta \phi+a\phi=\lambda_0 \rho\phi\text{ \ in }\Omega\setminus\gamma,\quad \ell\phi=0\text{ \ on } \partial\Omega,\quad
         \phi=y(\pm1)\, g\text{ \ on }\gamma_\pm;
\\
         &\begin{aligned}
        &-\Delta \psi+a \psi=\lambda_0 \rho\psi+\lambda_1^{(k)} \rho \left(\hat{v}_0^{(k)}+\alpha_0\cdot \bV\right)\text{ \ in }\Omega\setminus\gamma,\quad \ell\psi=0\text{ \ on } \partial\Omega,\\
         &\phantom{-}\psi^\pm= \hat{w}^{(k)}_1(\,\cdot\,,\pm1)\mp \partial_r \hat{v}_0^{(k)}+y(\pm1)\, g_0
         \mp\alpha_0\cdot \partial_r\bV \text{ \ on }\gamma
        \end{aligned}
\end{align*}
respectively. Since $g$ is orthogonal to  $H_{\lambda_0}$, the first problem is solvable and admits a solution  belonging to  $\mathcal{V}_{\lambda_0}^\perp$. As for the second one,  its solvability conditions  can be written in the form $\big(C_{\lambda_0}-\lambda_1^{(k)}I\big)\alpha_0=f$,
where $C_{\lambda_0}$ is a matrix with the entries
$$
   c_{ij}=\int_{\gamma_+} \partial_r V_i\:\partial_r V_j\,d\gamma+\int_{\gamma_-} \partial_r V_i\:\partial_r V_j\,d\gamma,\quad i,j=1,\dots,K,
$$
and $f$ is a vector  with the components
 \begin{multline*}
  f_i=\lambda_1^{(k)} \int_{\Omega}\rho \hat{v}_0^{(k)} V_i\,dx+ \int_{\gamma_+}\big(\hat{w}_1^{(k)}-\partial_r \hat{v}_0^{(k)}+y(1)\, g_0\big)\,\partial_r V_i\,d\gamma\\
  -\int_{\gamma_-}\big(\hat{w}_1^{(k)}+\partial_r \hat{v}_0^{(k)}+y(-1)\, g_0\big)\,\partial_r V_i\,d\gamma,
  \quad i=1,\dots,K.
 \end{multline*}
If we suppose that $\lambda_1^{(k)}$ is not an eigenvalue of $C_{\lambda_0}$, then the solvability conditions for $\psi$ can be fulfilled for any $g\in H_{\lambda_0}$. It is enough to set
$\alpha_0=\big(C_{\lambda_0}-\lambda_1^{(k)}I\big)^{-1}f$. Then the problem has a solution $\phi\in \mathcal{V}_{\lambda_0}^\perp$.

Note that the vector $\alpha_0$  has not yet been defined, because $f$ depends on the unknown function $g_0$.
Using representations \eqref{w1kRepresII} and \eqref{v1kRepresII}  along with the fact that $g\in H_{\lambda_0}^\perp$, we can write the solvability condition for \eqref{EqW2}, \eqref{NeumannCondW2} in the form
\begin{equation}\label{Mz1-lz1=f1}
  (M_{\lambda_0} -\lambda_1^{(k)}I)g=\lambda_2^{(k)} b^{(k)}+\lambda_1^{(k)}g_0+h_1^{(k)},
\end{equation}
where $h_1^{(k)}$ is given by \eqref{H1}.
For the equation \eqref{Mz1-lz1=f1} to be meaningful, we need to ensure that the  right hand side is orthogonal to  $H_{\lambda_0}$. Obviously, $b^{(k)}$ belongs to $H_{\lambda_0}^\perp$. Next, if $\lambda_1^{(k)}$ is different from zero, there exists a unique vector $g_0\in H_{\lambda_0}$ such that  $\lambda_1^{(k)}g_0 +h_1^{(k)}\in  H_{\lambda_0}^\perp$.
With  $g_0$ in hand, we can uniquely defined $f$, $\alpha_0$, and finally the leading term $(v_0^{(k)},w_0^{(k)})$.
The solvability condition for  \eqref{Mz1-lz1=f1} has the form $\lambda_2^{(k)}=-(h_1^{(k)}+\lambda_1^{(k)}g_0,b^{(k)})_{L_2(\gamma)}$.
Then there exists a unique solution $g$ satisfying the condition $(g,b^{(k)})_{L_2(\gamma)}=0$. And finally,  we can uniquely define  $w_1^{(k)}$  and $v_1^{(k)}$  (up to the vector $\alpha_1$). This process can be continued to systematically construct all other terms $\lambda_j^{(k)}$, $v_j^{(k)}$ and $w_j^{(k)}$ in the approximation $\Lambda_\eps^{(k)}$, $\hat{u}_\eps^{(k)}$ of the form \eqref{hatLambdaEps}, \eqref{hatUeps}. As in the previous section, $\hat{u}_\eps^{(k)}$ can be improved to the element $U_\eps^{(k)}=\hat{u}_\eps^{(k)}-\eta_\eps^{(k)}$ from the domain of operator $T_\eps$, where  $\eta_\eps^{(k)}$ is given by \eqref{EtaEpsEstimate}.

Summarizing results of the above calculations, we obtain the following statement.

\begin{lem}\label{LemmaQuasimodesTepsCaseIIIa}
The pairs $(\Lambda_\eps^{(k)},\|U_\eps^{(k)}\|^{-1}_{L^2(\rho_\eps,\Omega)}U_\eps^{(k)})$, $k\in \mathbb{N}$, are quasimodes of  $T_\eps$ with the accuracy $O(\eps^2)$ as $\eps\to 0$ that approximate the part of spectrum lying in a vicinity of  $\lambda_0\in \sigma(A)\cap\sigma(B)$.
\end{lem}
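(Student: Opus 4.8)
The plan is to argue exactly as in the proof of Lemma~\ref{LemmaQuasimodesTeps}, because by the construction carried out above the functions $v_j^{(k)}$ and $w_j^{(k)}$ satisfy the same hierarchy of recurrence relations, and the approximations $\Lambda_\eps^{(k)}$, $\hat{u}_\eps^{(k)}$ have the identical form \eqref{hatLambdaEps}, \eqref{hatUeps}. First I would set $F_\eps=(T_\eps-\Lambda_\eps^{(k)})U_\eps^{(k)}$ with $U_\eps^{(k)}=\hat{u}_\eps^{(k)}-\eta_\eps^{(k)}$, recall that $U_\eps^{(k)}\in\dom T_\eps$ and $T_\eps$ is self-adjoint, and split $\Omega$ into $\Omega\setminus\omega_\eps$ and $\omega_\eps$.

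Outside $\omega_\eps$ I would insert the truncated series into $-\Delta+a-\Lambda_\eps^{(k)}\rho$; by the choice of the $v_j^{(k)}$ as solutions of \eqref{EqV1} and its higher-order analogues the polynomial-in-$\eps$ part cancels identically, so only the contribution of the boundary-layer corrector $\eta_\eps^{(k)}$ remains, and estimate \eqref{EtaEpsEstimate} gives $\sup_{\Omega\setminus\omega_\eps}|F_\eps|\le c\eps^3$. Inside $\omega_\eps$ I would pass to the stretched variable $n=r/\eps$, use representation \eqref{LaplaceExpansion} of the Laplacian and collect the coefficients of $\eps^{j-2}$ in the expansion $\Delta-a+\eps^{-2}\Lambda_\eps^{(k)} q=\sum_j\eps^{j-2}p_j+\eps^2 P_\eps-a_\eps$. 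Since the $w_j^{(k)}$ were constructed precisely to annihilate the operators $p_i$, only the remainder $-\eps^2 P_\eps U_\eps^{(k)}+a_\eps$ survives, whence $\sup_{\omega_\eps}|F_\eps|\le c\eps^2$. Combining the two pointwise bounds with the weight $\eps^{-2}$ on $\omega_\eps$ and $|\omega_\eps|\sim\eps$ yields
\[
\|F_\eps\|^2_{L^2(\rho_\eps,\Omega)}\le c\eps^6+c|\omega_\eps|\eps^2\le c\eps^3 .
\]

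The only point that needs separate attention is the lower bound on $\|U_\eps^{(k)}\|_{L^2(\rho_\eps,\Omega)}$. As before, its leading behaviour is governed by $\eps^{-2}\int_{\omega_\eps}q\,|w_0^{(k)}|^2\,dx$, and here $w_0^{(k)}=b^{(k)}y$ with $b^{(k)}$ a \emph{normalized} eigenfunction of $M_{\lambda_0}$, so $w_0^{(k)}\not\equiv 0$; hence this integral is of order $O(\eps^{-1})$ and $\|U_\eps^{(k)}\|\ge c\eps^{-1/2}$ for $\eps$ small. This is exactly where the hypothesis $w_0\neq0$ of the present integer-power branch enters, and verifying it is the main thing distinguishing this case from the degenerate half-integer branch $w_0=0$, in which this dominant term disappears and the whole scaling must be revised. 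Dividing $\|F_\eps\|\le c\eps^{3/2}$ by $\|U_\eps^{(k)}\|$ produces the accuracy $O(\eps^2)$, and Proposition~\ref{LemQuasimodes} then applies.

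I do not expect any serious obstacle beyond this lower bound: the novelties of the case $\lambda_0\in\sigma(A)\cap\sigma(B)$, namely the eigenspace component $\alpha_0\cdot\bV$ in $v_0^{(k)}$, the finite-dimensional solvability system involving $C_{\lambda_0}$, and the projected Dirichlet-to-Neumann maps $M^\pm_{\lambda_0}$, affect only the \emph{selection} of the $v_j^{(k)}$ and $w_j^{(k)}$ and the form of the solvability conditions such as \eqref{Mz1-lz1=f1}. They do not enter the residual estimate, which depends solely on the smoothness of the constructed profiles and on the recurrence relations being satisfied; consequently the bound on $F_\eps$ is obtained verbatim as in Lemma~\ref{LemmaQuasimodesTeps}.
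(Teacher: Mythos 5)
Your proposal is correct and follows exactly the route the paper intends: the paper gives no separate argument for this lemma, stating only that it ``can be proved similarly to Lemma~\ref{LemmaQuasimodesTeps}'', and your reconstruction — residual estimate unchanged because the recurrence relations are satisfied, plus the lower bound $\|U_\eps^{(k)}\|\geq c\eps^{-1/2}$ coming from $w_0^{(k)}=b^{(k)}y\neq 0$ — is precisely that adaptation. You also correctly single out the non-vanishing of $w_0$ as the one point that genuinely needs checking, which is the same feature the paper exploits to distinguish this case from the half-integer branch of Lemma~\ref{LemmaQuasimodesTepsCaseIIIb}.
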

The lemma can be proved similarly to Lemma~\ref{LemmaQuasimodesTeps}.

\subsection*{Integer power asymptotics: case $w_0=0$}\label{SecIPA0}
Suppose now that $\lambda_{1/2}$ and $b_0$ are equal to zero simultaneously. In this case,  we also construct the integer power asymptotics  \eqref{AsympExUI}. Since $w_0=0$, problem \eqref{EqV1}--\eqref{CouplingV1W1} becomes
\begin{align}\label{EqV100}
        &-\Delta v_1+av_1=\lambda_0 \rho v_1+\lambda_1 \rho v_0\quad\text{in }\Omega\setminus\gamma,\qquad \ell v_1=0\quad\text{on } \partial\Omega,\\\label{EqW1new00}
         &-\partial^2_n w_1=\lambda_0 qw_1
         \text{ \ in } \omega, \quad \partial_n w_1(\,\cdot\,,-1)=\partial_r v_0^-,\quad
        \partial_n w_1(\,\cdot\,,1)=\partial_r v_0^+,\\\label{CouplingV1W100}
         &\phantom{-}v_1^-=w_1(\,\cdot\,,-1)+\partial_r v_0^-,\quad
          v_1^+=w_1(\,\cdot\,,1)-\partial_r v_0^+.
\end{align}
Compatibility condition \eqref{CompactibilityW1} for problem \eqref{EqW1new00} takes the form
\begin{equation}\label{YpartialV0}
y(-1)\,\partial_r v_0^--y(1)\,\partial_r v_0^+= 0.
\end{equation}
If we set $v_0=\alpha \cdot\bV$, then the last condition can be written as $\alpha \cdot\Psi=0$,
where $\alpha \in \Real^K\setminus\{0\}$, $\Psi=(\Psi_1,\dots,\Psi_K)$ and $\Psi_k$ are given by \eqref{PsiKset}. Let $y_*$ be the solution of the Cauchy problem $-y_*''=\lambda_0 q y_*$ in $(-1,1)$,\; $y_*(-1)=0$, \mbox{$y_*'(-1)=1$.}
Problem \eqref{EqW1new00} admits the solution $w_1(s,n)= y_*(n)\,\alpha \cdot\partial_r \bV^-(s)+y(n)b_1(s)$, where $b_1$ is an arbitrary $L_2(\gamma)$-function. Obviously, $\partial_n w_1(\,\cdot\,,-1)=\partial_r v_0^-$. The Lagrange identity yields $(yy_*'-y'y_*)|_{-1}^1=0$, and hence, $y_*'(1)=y(-1)/y(1)$. Then condition \eqref{YpartialV0} becomes $\alpha \cdot \partial_r \bV^+= y_*'(1)\,\alpha \cdot \partial_r \bV^- $, and we have
\begin{equation*}
  \partial_n w_1(\,\cdot\,,1)=y_*'(1)\,\alpha \cdot\partial_r \bV^-= \alpha \cdot \partial_r \bV^+=\partial_r v_0^+.
\end{equation*}
Next, we can rewrite \eqref{EqV100} and \eqref{CouplingV1W100} in the form
\begin{align}\label{v1finite00}
        &-\Delta v_1+av_1=\lambda_0 \rho v_1+\lambda_1 \rho \,\alpha \cdot\bV\;\;\text{in }\Omega\setminus\gamma,\quad \ell v_1=0\;\;\text{on } \partial\Omega,\\\label{condv1finite00}
         &\quad v_1^-=\alpha \cdot\partial_r \bV^-+y(-1)b_1,\quad    v_1^+=(y_*(1)-y_*'(1))\,\alpha \cdot\partial_r \bV^++y(1)b_1.
\end{align}
To achieve  solvability of the problem one needs to choose proper vectors $\alpha $ along with the parameter $\lambda_1$. Multiplying the equation in \eqref{v1finite00} by $V_1,\dots,V_K$ in turn and integrating by parts
twice in view of the boundary conditions \eqref{condv1finite00} yield the equation $L\alpha =\lambda_1\alpha $, where the symmetric matrix $L=(l_{ij})_{i,j=1}^K$ has the entries
\begin{equation}\label{MatrixLw0}
  l_{ij}=\int_\gamma \big(
  (y_*(1)-y_*'(1))\,\partial_r V_i^+\partial_r V_j^+
  +\partial_r V_i^-\partial_r V_j^-
  \big)\,d\gamma.
\end{equation}
Note that $L$ does not depend on $b_1$, because of \eqref{YpartialV0}.

Hence, the number $\lambda_1$ and non-zero vector $\alpha $ satisfy two conditions $L\alpha =\lambda_1\alpha $ and $\alpha \cdot\Psi=0$. We must find the eigenvectors of $L$ which are ``orthogonal'' to $\Psi$. The dimension of the space $h_{\lambda_0}=\{\alpha\in \mathbb{C}^K\colon \alpha\cdot\Psi=0\}$ is $K-d$, since $\dim H_{\lambda_0}=d$.
Assume that $L$ has the simple eigenvalues $\nu_1,\dots,\nu_p$ with the eigenvectors $\alpha_1,\dots,\alpha_p$ belonging to $h_{\lambda_0}$. Obviously, $p\le K-d$.   For any pair $(\nu_k, \alpha_k)$, we can solve \eqref{v1finite00}, \eqref{condv1finite00} and find
\begin{equation*}
v_1^{(k)}=\hat{v}_1^{(k)}+\alpha_1^{(k)}\cdot\bV
\end{equation*}
up to the vector $\alpha_1^{(k)}\in\Real^K$. Also, it follows from the first condition in \eqref{condv1finite00} that
\begin{equation*}
  b_1^{(k)}(s)= y(-1)^{-1}\hat{v}_1^{(k)}(s,-1)-\alpha_k \cdot\partial_r \bV(s,-1),
\end{equation*}
and now the function $w_1^{(k)}=y_*\;\alpha_k \cdot\partial_r \bV^-+y\;b_1^{(k)}$ is uniquely defined.
We continue in this fashion obtaining $p$  quasimodes of  $T_\eps$ of the form
\begin{gather}\label{hatLambdaEps00}
\Lambda_\eps^{(k)} =\lambda_0+\nu_k\eps+\lambda_{2,k}\eps^2+\lambda_{3,k}\eps^3,\\\label{hatUeps00}
W_\eps^{(k)}(x)=
    \begin{cases}
       \alpha_k \cdot\bV(x)+\eps v_1^{(k)}(x)+\eps^2 v_2^{(k)}(x)+\eps^3 v_3^{(k)}(x)-\eta_\eps^{(k)}(x) & \text{if } x\in \Omega\setminus \omega_\eps,\\
\phantom{\alpha_k \cdot\bV(x)+\;}\eps w_1^{(k)}(s,\rep)+\eps^2 w_2^{(k)}(s,\rep)+\eps^3 w_3^{(k)}(s,\rep) & \text{if } x\in \omega_\eps.
    \end{cases}
\end{gather}

Now we summarize the results of our calculations. The next lemma can be proved similarly to Lemma~\ref{LemmaQuasimodesTeps}.

\begin{lem}\label{LemmaQuasimodesTepsCaseIII0}
The pairs $(\Lambda_\eps^{(k)},\|W_\eps^{(k)}\|^{-1}_{L^2(\rho_\eps,\Omega)}W_\eps^{(k)})$, $k=1,\dots,p$, are quasimodes of  $T_\eps$ with the accuracy $O(\eps^2)$ as $\eps\to 0$ that approximate a part of spectrum lying in a vicinity of  $\lambda_0\in \sigma(A)\cap\sigma(B)$.
\end{lem}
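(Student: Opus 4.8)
The plan is to mimic the proof of Lemma~\ref{LemmaQuasimodesTeps} step by step, the single genuinely new ingredient being the size of the normalizing constant. Writing $W_\eps$ for $W_\eps^{(k)}$ and dropping the index $k$, I would set $F_\eps=(T_\eps-\Lambda_\eps I)W_\eps=\rho_\eps^{-1}R_\eps$ with $R_\eps=(-\Delta+a-\Lambda_\eps\rho_\eps)W_\eps$ and estimate $R_\eps$ pointwise on the two regions separately. On $\Omega\setminus\omega_\eps$ the functions $v_0=\alpha\cdot\bV$, $v_1,v_2,v_3$ were chosen so that the coefficients of $\eps^0,\dots,\eps^3$ in $R_\eps$ vanish, the equation at order $\eps$ being \eqref{v1finite00}; hence $R_\eps$ reduces to the order-$\eps^4$ truncation tail plus the contribution of the smoothing corrector $\eta_\eps$ already subtracted in \eqref{hatUeps00}, which is $O(\eps^3)$ by \eqref{EtaEpsEstimate}. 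On $\omega_\eps$ I would substitute expansion \eqref{LaplaceExpansion}; because $w_0=0$ the profiles obey $p_0w_1=0$, $p_0w_2+p_1w_1=0$ and $p_0w_3+p_1w_2+p_2w_1=0$ by construction (the first of these is \eqref{EqW1new00}), so the coefficients of $\eps^{-1},\eps^0,\eps^1$ all cancel and $R_\eps=O(\eps^2)$ pointwise in the inclusion.

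Next I would pass to the weighted norm. Since $T_\eps=\rho_\eps^{-1}(-\Delta+a)$ is self-adjoint in $L^2(\rho_\eps,\Omega)$, the residual norm is $\|F_\eps\|^2_{L^2(\rho_\eps,\Omega)}=\int_\Omega\rho_\eps^{-1}|R_\eps|^2\,dx$, and the weight $\rho_\eps^{-1}=\eps^2q^{-1}$ on $\omega_\eps$ is what damps the interior term: with $|\omega_\eps|=O(\eps)$ and $|R_\eps|=O(\eps^2)$ there, the inclusion contributes $\eps^2\int_{\omega_\eps}q^{-1}|R_\eps|^2\,dx=O(\eps^7)$, while the exterior contributes $O(\eps^6)$. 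Thus $\|F_\eps\|_{L^2(\rho_\eps,\Omega)}=O(\eps^3)$.

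The decisive difference from Lemma~\ref{LemmaQuasimodesTeps} appears in the normalization. Because $w_0=0$, the mass of $W_\eps$ is no longer concentrated in the thin inclusion: the leading term $v_0=\alpha\cdot\bV$ lives on $\Omega$, the interior part starts only at order $\eps$, and using \eqref{NormalizeCondVj} one gets $\|W_\eps\|^2_{L^2(\rho_\eps,\Omega)}=|\alpha|^2+O(\eps)$. Hence the factor $\|W_\eps\|^{-1}_{L^2(\rho_\eps,\Omega)}$ is $O(1)$, in sharp contrast to the $O(\eps^{1/2})$ of Case~II. Multiplying, the accuracy of the normalized pair is $\|W_\eps\|^{-1}_{L^2(\rho_\eps,\Omega)}\|F_\eps\|_{L^2(\rho_\eps,\Omega)}=O(\eps^3)$, which is a fortiori the asserted $O(\eps^2)$; applying Proposition~\ref{LemQuasimodes} to each of the $p$ pairs then places an eigenvalue of $T_\eps$ within $O(\eps^2)$ of $\lambda_0+\nu_k\eps$, with pairwise distinctness following, as in Theorem~\ref{ThmCaseII}, from the simplicity of the $\nu_k$.

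I expect the only real obstacle to be exactly this absence of a concentration factor. In Case~II the blow-up $\|U_\eps\|\sim\eps^{-1/2}$ absorbed part of the residual, whereas here $\|W_\eps\|=O(1)$ gives no such help, so all the smallness must be produced by $F_\eps$ itself; the estimate goes through only because one weights the residual by $\rho_\eps^{-1}$ (not $\rho_\eps$) in the inclusion, so that the pointwise $O(\eps^2)$ interior remainder is suppressed to $O(\eps^3)$ in the relevant norm. The auxiliary facts — that the jumps of $W_\eps$ and $\partial_r W_\eps$ across $\partial\omega_\eps$ are $O(\eps^3)$, so that the corrector $\eta_\eps$ of \eqref{EtaCorrector} does place $W_\eps$ in $\dom T_\eps$, and that all the profiles $v_j,w_j$ are smooth — are routine and carry over unchanged from the Case~II argument.
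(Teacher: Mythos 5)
Your proposal is correct and follows exactly the route the paper prescribes: the paper offers no proof beyond ``can be proved similarly to Lemma~\ref{LemmaQuasimodesTeps}''. There is, however, one point at which your argument is not a mere transcription but a necessary repair, and it is precisely the point you flag at the end. In the proof of Lemma~\ref{LemmaQuasimodesTeps} the residual norm is estimated through $\int_{\Omega\setminus\omega_\eps}\rho|F_\eps|^2\,dx+\eps^{-2}\int_{\omega_\eps}q|F_\eps|^2\,dx$, i.e.\ the pointwise remainder $R_\eps=(-\Delta+a-\Lambda_\eps\rho_\eps)U_\eps$ is weighted by $\rho_\eps$ rather than by $\rho_\eps^{-1}$; this yields only $\|F_\eps\|_{L^2(\rho_\eps,\Omega)}\leq c\eps^{3/2}$, which suffices there solely because the normalizing factor $n_\eps$ contributes an extra $\eps^{1/2}$. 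Transplanted verbatim to the present case, where $w_0=0$ forces $\|W_\eps^{(k)}\|_{L^2(\rho_\eps,\Omega)}$ to stay bounded away from $0$ and $\infty$, that computation would give only accuracy $O(\eps^{3/2})$, short of the stated $O(\eps^2)$. Your observation that the correct quantity is $\|(T_\eps-\Lambda_\eps I)W_\eps^{(k)}\|^2_{L^2(\rho_\eps,\Omega)}=\int_\Omega\rho_\eps^{-1}|R_\eps|^2\,dx$, so that the factor $\rho_\eps^{-1}=\eps^2q^{-1}$ suppresses the pointwise $O(\eps^2)$ interior remainder to an $O(\eps^7)$ contribution to the squared norm, is what actually delivers accuracy $O(\eps^3)$ and hence, a fortiori, the claimed $O(\eps^2)$. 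The remaining ingredients --- the vanishing of the exterior coefficients up to order $\eps^3$, the interior cascade $p_0w_1=0$, $p_0w_2+p_1w_1=0$, $p_0w_3+p_1w_2+p_2w_1=0$ leaving an $O(\eps^2)$ remainder, the $O(\eps^3)$ jumps absorbed by $\eta_\eps^{(k)}$, and $\|W_\eps^{(k)}\|^2_{L^2(\rho_\eps,\Omega)}=|\alpha_k|^2+O(\eps)$ via \eqref{NormalizeCondVj} --- are all exactly as in the paper's construction, so your proof is complete.
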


\subsection*{Half-integer power asymptotics}

The set of quasimodes in Lemma~\ref{LemmaQuasimodesTepsCaseIIIa}  does not approximate all eigenvalues of $T_\eps$ that converge to $\lambda_0$.
We  assume that $\lambda_{1/2}$ in \eqref{AsympExUIII} is different from zero, and then $b_0=0$, by \eqref{lambda12b0}. Recalling now \eqref{BasisCaseIII},
we have $v_0=\beta_0\cdot \bV$, where $\beta_0$ is an arbitrary vector in $\Real^K$ such that $\|\beta_0\|=1$. In this case, we will use some finite-dimensional operator instead of $M_{\lambda_0}$ to split the limit multiple eigenvalue $\lambda_0$.
Reasoning as above we deduce that the problem \eqref{EqV1/2}-\eqref{CouplingV1/2W1/2} has a solution of the form $w_{1/2}(s,n)=b_{1/2}(s)y(n)$, $v_{1/2}=\hat{v}_{1/2}+\beta_{1/2}\cdot\bV$
where $\beta_{1/2}\in \Real^K$ and $\hat{v}_{1/2}$ is a partial solution of the problem
\begin{align}\label{EqV1/2RV}
        &-\Delta v_{1/2}+av_{1/2}=\lambda_0 \rho v_{1/2}+\lambda_{1/2} \rho v_0 \text{ \ in }\Omega\setminus\gamma,\quad \ell v_{1/2}=0\text{ \ on } \partial\Omega,
         \\\label{CouplingV1/2W1/2RV}
         &\phantom{-}v_{1/2}^-=y(-1)\, b_{1/2},\qquad
          v_{1/2}^+=y(1)\, b_{1/2}
\end{align}
that is orthogonal to  $\mathcal{V}_{\lambda_0}$ in $L_2(\rho, \Omega)$.
By Proposition~\ref{PropSolvabilityV*}, the solvability conditions for  \eqref{EqV1/2RV}, \eqref{CouplingV1/2W1/2RV} can be written in the vector form
\begin{equation}\label{AlhpaJRV}
   \int_\gamma b_{1/2}\Psi\,d\gamma+\lambda_{1/2}\beta_0=0.
\end{equation}
For the next terms we have
\begin{align}\label{EqV1II}
        &-\Delta v_1+av_1=\rho(\lambda_0 v_1+\lambda_{1/2}  v_{1/2}+\lambda_1  v_0) \text{ \ in }\Omega\setminus\gamma,\quad \ell v_1=0\text{ \ on } \partial\Omega,\\
        \label{EqW1II}
         & -\partial^2_n w_1=\lambda_0 qw_1+\lambda_{1/2} qw_{1/2}
         \quad\text{in } \omega,
         \\\label{NeumannCondW1II}
          &\phantom{-}\partial_n w_1(\,\cdot\,,-1)=\partial_r v_0^-,\qquad
          \partial_n w_1(\,\cdot\,,1)=\partial_r v_0^+,
          \\\label{CouplingV1W1II}
         &\phantom{-}v_1^-=w_1(\,\cdot\,,-1)+\partial_r v_0^-,\qquad
          v_1^+=w_1(\,\cdot\,,1)-\partial_r v_0^+.
\end{align}
Problem \eqref{EqW1II}, \eqref{NeumannCondW1II} is solvable if $y(1)\,\partial_r v_0^+-y(-1)\,\partial_r v_0^-+\lambda_{1/2} b_{1/2}=0.$
Since $\partial_r v_0^\pm=\beta_0\cdot \partial_r \bV^\pm$, it can be written in the form
\begin{equation*}
  \beta_0\cdot\big(y(1)\,\partial_r \bV^+-y(-1)\,\partial_r \bV^-\big)+\lambda_{1/2} b_{1/2}=0.
\end{equation*}
Using notation \eqref{PsiKset}, we have
\begin{equation}\label{CompactibilityW1II}
  \beta_0\cdot\Psi+\lambda_{1/2} b_{1/2}=0.
\end{equation}
Multiplying this equality by $\Psi^\top$, integrating over $\gamma$ and recalling  \eqref{AlhpaJRV}, we finally discover
 $\bigl(G_{\lambda_0} -\lambda_{1/2}^2\bigr)\,\beta_0=0$,
where $G_{\lambda_0}$ is the Gram matrix of $\Psi_1$,\ldots,$\Psi_K$.
This matrix is semi-positive and its rank is equal to  the dimension of $H_{\lambda_0}$.

Suppose  $\omega^2$ is a positive simple eigenvalue of $G_{\lambda_0}$ with the eigenvector $\beta_0$.
So there exist two different correctors $\lambda_{1/2}=\omega$ and $\lambda_{1/2}=-\omega$
in  asymptotics \eqref{AsympExLambdaIII} with the same leading term $v_0=\beta_0\cdot \bV$
in  approximation \eqref{AsympExUIII}. First assume that $\lambda_{1/2}=\omega$.  From \eqref{CompactibilityW1II}, we have
$b_{1/2}=-\omega^{-1} \beta_0\cdot\Psi$.
Up to a function $b_1$, we can find $w_1(s,n)=\hat{w}_1(s,n)+b_1(s)y(n)$,
where $\hat{w}_1$ is a partial solution of \eqref{EqW1II}, \eqref{NeumannCondW1II} subject to  the condition $(\hat{w}_1,y)_{L_2(q,(-1,1) )}=0$.
Next, problem \eqref{EqV1II}, \eqref{CouplingV1W1II} is solvable if and only if
 $\int_\gamma b_{1}\Psi\,d\gamma+\omega\beta_{1/2}+\lambda_{1}\beta_{0}=h$,
where
\begin{equation*}
  h=\int_{\gamma_+}\big(\hat{w}_1-\partial_r v_0\big)\,\partial_r \bV_k\,\,d\gamma-\int_{\gamma_-}\big(\hat{w}_1+\partial_r v_0\big)\,\partial_r \bV\,d\gamma -\omega\int_{\Omega} \rho \hat{v}_{1/2}\bV_k\,dx.
\end{equation*}
Also, a solution of the problem
\begin{align*}
& -\partial^2_n w_{3/2}=q(\lambda_0w_{3/2}+\omega w_1+\lambda_{1} w_{1/2})
         \quad\text{in } \omega,\\
&\quad\partial_n w_{3/2}(\,\cdot\,,-1)=\partial_r v_{1/2}^-,\quad
         \partial_n w_{3/2}(\,\cdot\,,1)=\partial_r v_{1/2}^+.
\end{align*}
exists if and only if
\begin{equation}\label{CompactibilityW3/2II}
  y(-1)\,\partial_r v_{1/2}^--y(1)\,\partial_r v_{1/2}^+=
  \omega b_{1}+\lambda_{1} b_{1/2}.
\end{equation}
Reasoning as in the previous step, we can rewrite this condition in the form
\begin{equation*}
  \bigl(G_{\lambda_0}-\omega^2 I\bigr)\beta_{1/2}=2\omega\lambda_{1}\beta_0-\omega h+f,
\end{equation*}
where
\begin{equation*}
f=y(-1)\, \int_{\gamma_-}\partial_r \hat{v}_{1/2} \Psi\,d\gamma  -y(1)\,\int_{\gamma_+}\partial_r \hat{v}_{1/2} \Psi\,d\gamma.
\end{equation*}
Since $\omega^2$ is an eigenvalue of $G_{\lambda_0}$, the  system admits a solution $\beta_{1/2}$ if and only if
\begin{equation}\label{Lambda1III}
\lambda_1=\tfrac1{2\omega}\,\beta_0\cdot (\omega h-f).
\end{equation}
Although the unit vector $\beta_0$ is defined up to the change of sign, $\lambda_1$ is uniquely defined by \eqref{Lambda1III}, because the transformation $\beta_0\mapsto -\beta_0$ implies that the vectors $h$ and $f$ also change their sings.
We fix this solution such that  $\beta_{1/2}\cdot\beta_0=0$. Then
\begin{equation*}
   b_{1}=\tfrac1{\omega}\,\Big(y(-1)\,\partial_r \hat{v}_{1/2}^--y(1)\,\partial_r \hat{v}_{1/2}^+-\beta_{1/2}\cdot\Psi-\lambda_{1} b_{1/2}\Big),
\end{equation*}
by \eqref{CompactibilityW3/2II}. Assuming that $\lambda_{1/2}^+=\omega$  we have calculated $\lambda_1^+$, $w_{1/2}^{(+)}$, $w_1^{(+)}$ and $v_{1/2}^{(+)}$ in  \eqref{AsympExLambdaIII}, \eqref{AsympExUIII}. We can continue in this fashion obtaining the next terms of the asymptotics.
Taking $\lambda_{1/2}^-=-\omega$  we can compute analogously the terms $\lambda_k^-$, $w_{k}^{(-)}$ and $v_{k}^{(-)}$ in the asymptotics of other eigenvalue and eigenfunction. A simple analysis of the foregoing formulas shows that $w_{1/2}^{(-)}=-w_{1/2}^{(+)}$ and $v_{1/2}^{(-)}=-v_{1/2}^{(+)}$. Moreover, by construction, we have
\begin{equation*}
  w_{1/2}^{(\pm)}(s,n)=\mp \omega^{-1}\,y(n)\, \beta_0\cdot\Psi(s), \qquad v_{1/2}^{(\pm)}(x)=\pm v_*(x),
\end{equation*}
where $v_*$ is a solution of the problem
\begin{equation*}
  -\Delta v+av=\lambda_0 \rho v+\omega \rho\, \beta_0\cdot\bV \text{ in }\Omega\setminus\gamma,\quad \ell v=0\text{ on } \partial\Omega,\quad
  v=-\omega^{-1} y(\pm1)\, \beta_0\cdot \Psi\text{ \ on } \gamma_\pm
\end{equation*}
that is orthogonal to  $\mathcal{V}_{\lambda_0}$.

Let us summarize the above considerations in the following lemma.
\begin{lem}\label{LemmaQuasimodesTepsCaseIIIb}
  Let $d$ be the dimension of $H_{\lambda_0}$, where $\lambda_0\in \sigma(A)\cap\sigma(B)$. Suppose that all non-zero eigenvalues $\omega_1^2,\dots, \omega_d^2$ of the matrix $G_{\lambda_0}$ are simple and $\beta_{0,1},\dots,\beta_{0,d}$ are the corresponding normalized eigenvectors. Then the operator $T_\eps$ possesses $d$ pairs of  the quasimodes $(\hat{\mu}_{\eps, j}^{\pm}, n^{\pm}_{\eps, j} V^{(\pm)}_{\eps, j})$, $j=1,\ldots,d$, with the accuracy $O(\eps^{3/2})$ as $\eps\to 0$, where $n^{\pm}_{\eps, j}$ is a normalizing factor and
  \begin{align*}
&\hat{\mu}_{\eps, j}^{\pm}=
\lambda_0\pm \eps^{1/2}\omega_j+\sum_{k=2}^6\eps^{k/2}\lambda_{k/2,j}^{\pm},\\
&V^{(\pm)}_{\eps, j}=
        \beta_{0,j}\cdot\bV\pm\eps^{1/2}v_{*,j} +\sum_{k=2}^6\eps^{k/2} v_{k/2,j}^{(\pm)}-\eta^{(\pm)}_{\eps, j}  \text{ \ in } \Omega\setminus \omega_\eps,\\
    &V^{(\pm)}_{\eps, j}=
       \mp \eps^{1/2}\omega_j^{-1}y(\tfrac{r}{\eps})\, \beta_{0,j}\cdot\Psi(s)+
        \sum_{k=2}^6\eps^{k/2}w_{k/2,j}^{(\pm)}(s,\tfrac{r}{\eps})  \text{ \ in }  \omega_\eps.
\end{align*}
The small correctors  $\eta^{(\pm)}_{\eps, j}$ are defined as in \eqref{EtaCorrector}  with $V^{(\pm)}_{\eps, j}$ in place of $\hat{u}_\eps^{(k)}$.
\end{lem}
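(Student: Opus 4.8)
The plan is to argue exactly as in Lemma~\ref{LemmaQuasimodesTeps}, separately for each sign and each index $j$. The formal data $\hat\mu^{\pm}_{\eps,j}$ and $V^{(\pm)}_{\eps,j}$ have already been produced by the recursion of this section, so only two things remain: to place the approximations in $\dom T_\eps$, and to estimate the residual against the normalizing factor. For the first point I would note that, by construction, the components $v_{k/2}$ and $w_{k/2}$ are smooth and satisfy the transmission relations \eqref{PECoupling} up to the order carried; hence $V^{(\pm)}_{\eps,j}$ and its normal derivative have only high-order jumps across $\partial\omega_\eps$, and subtracting the cut-off corrector $\eta^{(\pm)}_{\eps,j}$ as in \eqref{EtaCorrector} removes these jumps, giving an element of $\dom T_\eps$ while perturbing the function and its Laplacian outside $\omega_\eps$ only negligibly, cf.~\eqref{EtaEpsEstimate}.

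Next I would set $F^{\pm}_{\eps,j}=(-\Delta+a-\hat\mu^{\pm}_{\eps,j}\rho_\eps)V^{(\pm)}_{\eps,j}$ and estimate it on $\Omega\setminus\omega_\eps$ and on $\omega_\eps$ separately. On $\Omega\setminus\omega_\eps$ the equations \eqref{EqV1/2}, \eqref{EqV1II} and their successors, solved by $v_0,v_{1/2},v_1,\dots$, cancel the whole expansion, so that $F^{\pm}_{\eps,j}$ there originates only from the truncation of $\hat\mu^{\pm}_{\eps,j}$ and from $\eta^{(\pm)}_{\eps,j}$, and is of high order. On $\omega_\eps$ I would substitute \eqref{LaplaceExpansion} and regroup $-\Delta+a-\eps^{-2}\hat\mu^{\pm}_{\eps,j}q$ by the (now half-integer) powers of $\eps$; the recursively solved problems \eqref{EqW1/2}, \eqref{EqW1II} and their successors for $w_{1/2},w_1,w_{3/2},\dots$ annihilate every term up to the order reached, leaving there, together with the remainders $a_\eps$ and $\eps^2P_\eps V^{(\pm)}_{\eps,j}$ exactly as in Lemma~\ref{LemmaQuasimodesTeps}, a pointwise residual of high order in $\eps$.

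The decisive new feature, and the source of the $O(\eps^{3/2})$ accuracy, is the size of the normalizing factor. Here $w_0=0$ while $v_0=\beta_{0,j}\cdot\bV\neq0$, so the bulk of the $L^2(\rho_\eps,\Omega)$-mass no longer concentrates in the thin tube $\omega_\eps$ but lies on $\Omega\setminus\omega_\eps$. Using \eqref{NormalizeCondVj}, $\|\beta_{0,j}\|=1$ and the normalization $\|y\|_{L_2(q,(-1,1))}=1$, the outer part of $\|V^{(\pm)}_{\eps,j}\|^2_{L^2(\rho_\eps,\Omega)}$ tends to $1$; the inner part equals $\eps^{-2}\int_{\omega_\eps}q|V^{(\pm)}_{\eps,j}|^2\,dx$, and since $\omega_j^2$ is the eigenvalue of $G_{\lambda_0}$ attached to $\beta_{0,j}$, so that $\|\beta_{0,j}\cdot\Psi\|^2_{L_2(\gamma)}=\omega_j^2$, this part tends to a positive constant as well. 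Thus $\|V^{(\pm)}_{\eps,j}\|_{L^2(\rho_\eps,\Omega)}$ stays bounded away from $0$ and $\infty$, so $n^{\pm}_{\eps,j}=O(1)$, in contrast with the $O(\eps^{1/2})$ scaling of Lemma~\ref{LemmaQuasimodesTeps}. Combined with the weighted estimate $\|F^{\pm}_{\eps,j}\|_{L^2(\rho_\eps,\Omega)}=O(\eps^{3/2})$, in which the $\eps^{-2}$ weight over the tube $\omega_\eps$ of measure $O(\eps)$ balances the high-order residual, this gives $\|(T_\eps-\hat\mu^{\pm}_{\eps,j})(n^{\pm}_{\eps,j}V^{(\pm)}_{\eps,j})\|=n^{\pm}_{\eps,j}\|F^{\pm}_{\eps,j}\|=O(\eps^{3/2})$, which is the assertion.

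The hardest part will be precisely this bookkeeping of half-integer layers against the two competing weights. Because $n^{\pm}_{\eps,j}$ is $O(1)$ rather than $O(\eps^{1/2})$, one no longer has the extra half-power that made Lemma~\ref{LemmaQuasimodesTeps} accurate to $O(\eps^2)$, so the recursion must be carried far enough, through the terms displayed in the statement, to force the residual inside $\omega_\eps$ to be small enough that, after weighting by $\eps^{-2}$ and multiplying by the $O(1)$ factor, only $O(\eps^{3/2})$ survives. Keeping track of which half-integer order each of \eqref{EqV1/2}--\eqref{CouplingV1W1II} removes, and verifying that nothing of order below $\eps^{3/2}$ is left in $\|F^{\pm}_{\eps,j}\|_{L^2(\rho_\eps,\Omega)}$, is the delicate computation; the remaining steps merely repeat the proof of Lemma~\ref{LemmaQuasimodesTeps}.
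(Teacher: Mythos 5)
Your proposal is correct and follows essentially the same route as the paper: the residual estimate $\|F_\eps\|_{L^2(\rho_\eps,\Omega)}=O(\eps^{3/2})$ is carried over verbatim from Lemma~\ref{LemmaQuasimodesTeps}, and the only new point is that $w_0=0$ makes $\|V^{(\pm)}_{\eps,j}\|_{L^2(\rho_\eps,\Omega)}$ bounded away from $0$ and $\infty$, so the normalizing factor is $O(1)$ and the accuracy drops to $O(\eps^{3/2})$. Your explicit computation of the inner contribution via $\|\beta_{0,j}\cdot\Psi\|^2_{L_2(\gamma)}=\omega_j^2$ is a correct (and slightly more detailed) version of the paper's one-line observation.
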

\begin{proof}
 The proof differs from the proof of Lemma~\ref{LemmaQuasimodesTeps} only by estimate \eqref{EstQuasimodes}. The approximations to eigenvalues and eigenfunctions have been constructed up to order $O(\eps^3)$ and therefore the remainder term $F_\eps$ (with the notation of  Lemma~\ref{LemmaQuasimodesTeps}) can be also estimated by $\|F_\eps\|_{L^2(\rho_\eps,\Omega)}\leq c_1 \eps^{3/2}$.
But the leading term $w_0$ in this asymptotics is equal to zero and hence the $L^2(\rho_\eps,\Omega)$-norm of $V^{(\pm)}_{\eps, j}$ is bounded uniformly with respect to $\eps$. Then the normalizing factor $n^{\pm}_{\eps, j}$ tends to some positive number as $\eps\to 0$ and
 \begin{equation*}
 \|(T_\eps-\hat{\mu}_{\eps, j}^{\pm}I)(n^{\pm}_{\eps, j} V^{(\pm)}_{\eps, j})\|_{L^2(\rho_\eps,\Omega)}=n^{\pm}_{\eps, j}\|F_\eps\|_{L^2(\rho_\eps,\Omega)}\leq c_2 \eps^{3/2},
\end{equation*}
which is the desired conclusion.
\end{proof}

In view of  Lemmas~\ref{LemmaQuasimodesTepsCaseIIIa}--\ref{LemmaQuasimodesTepsCaseIIIb}, we can prove  the following result in the same way as Theorem~\ref{ThmCaseII}.

\begin{thm}\label{ThmCaseIII}
Suppose that the set $\sigma(A)\cap\sigma(B)$  is non empty and $\lambda_0$ is an eigenvalue of the limit operator $\cP$ belonging to this intersection.
\begin{itemize}
  \item[\textit{(i)}] Assume the spectrum $\{\lambda_1^{(k)}\}_{k\in \mathbb{N}}$ of the operator $M_{\lambda_0}$ is simple.
 Then there exists a countable set of eigenvalues $\lambda_{\eps,k}$, $k\in \mathbb{N}$, in the spectrum of $T_\eps$ that admit the asymptotics
\begin{equation*}
\lambda_{\eps,k}=\lambda_0+\eps\lambda_1^{(k)}+O(\eps^2), \quad\text{as }  \eps\to 0.
\end{equation*}
  \item[\textit{(ii)}] Assume $\dim H_{\lambda_0}=d$ and all positive eigenvalues $\omega_j^2$ of the  matrix $G_{\lambda_0}$ are simple. Then operator $T_\eps$  possesses $2d$ eigenvalues with the asymptotics
 \begin{align*}\label{MuEpsAsympIII}
&\mu_{\eps,j}^-=\lambda_0- \eps^{1/2}\omega_j+\eps\lambda_{1,j}^-+O(\eps^{3/2}), \\
&\mu_{\eps,j}^+=\lambda_0+\eps^{1/2}\omega_j+\eps\lambda_{1,j}^++O(\eps^{3/2}),\quad j=1,\dots,d,
\end{align*}
as $\eps\to 0$, where  $\lambda_{1,j}^\pm$ can be calculated from \eqref{Lambda1III}.
\item[\textit{(iii)}] Also, the operator $T_\eps$ can have no more than $K-d$ eigenvalues of possessing the asymptotics
\begin{equation*}
    \lambda^\eps_i=\lambda_0+\nu_i\eps+O(\eps^2),\quad \text{as } \eps\to 0,
\end{equation*}
 where $\nu_i$ are simple non-zero eigenvalues of the matrix  in \eqref{MatrixLw0}.
\end{itemize}
\end{thm}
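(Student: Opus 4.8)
The plan is to deduce all three assertions from the quasimode principle of Proposition~\ref{LemQuasimodes} in exactly the manner of Theorem~\ref{ThmCaseII}, feeding it the three families of quasimodes already constructed in Lemmas~\ref{LemmaQuasimodesTepsCaseIIIa}, \ref{LemmaQuasimodesTepsCaseIII0} and \ref{LemmaQuasimodesTepsCaseIIIb}. Since $T_\eps$ is self-adjoint with discrete spectrum, each quasimode $(\mu,u)$ of accuracy $\delta$ guarantees an eigenvalue of $T_\eps$ in $[\mu-\delta,\mu+\delta]$; what remains is to translate these existence statements into the stated counts and, crucially, to verify that distinct quasimodes detect distinct eigenvalues.

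For parts (i) and (iii) I would argue exactly as in Theorem~\ref{ThmCaseII}. For (i), fix $I\in\mathbb{N}$; Lemma~\ref{LemmaQuasimodesTepsCaseIIIa} supplies quasimodes with approximate eigenvalues $\Lambda_\eps^{(k)}=\lambda_0+\eps\lambda_1^{(k)}+O(\eps^2)$ and accuracy $O(\eps^2)$ for $k=1,\dots,I$, so Proposition~\ref{LemQuasimodes} produces eigenvalues $\lambda_{\eps,k}$ with $|\lambda_{\eps,k}-\lambda_0-\eps\lambda_1^{(k)}|\le c_I\eps^2$. The simplicity of the spectrum of $M_{\lambda_0}$ separates the correctors $\lambda_1^{(k)}$, and the two-inequality argument of Theorem~\ref{ThmCaseII} then forces the $\lambda_{\eps,k}$ to be pairwise different for small $\eps$; letting $I\to\infty$ yields the countable family. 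Part (iii) is the same argument applied to the finite family of Lemma~\ref{LemmaQuasimodesTepsCaseIII0}, again of accuracy $O(\eps^2)$, whose leading correctors are the simple non-zero eigenvalues $\nu_i$ of the matrix \eqref{MatrixLw0}; their simplicity separates the associated eigenvalues at order $\eps$.

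For part (ii) the only change is the order of accuracy. Lemma~\ref{LemmaQuasimodesTepsCaseIIIb} gives $2d$ quasimodes of accuracy $O(\eps^{3/2})$ with approximate eigenvalues $\hat\mu_{\eps,j}^\pm=\lambda_0\pm\eps^{1/2}\omega_j+\eps\lambda_{1,j}^\pm+O(\eps^{3/2})$, so Proposition~\ref{LemQuasimodes} locates eigenvalues $\mu_{\eps,j}^\pm$ within $O(\eps^{3/2})$ of these values. Since the eigenvalues $\omega_1^2,\dots,\omega_d^2$ of $G_{\lambda_0}$ are simple and positive, the $2d$ numbers $\pm\omega_j$ are distinct, whence the leading correctors separate the $\mu_{\eps,j}^\pm$ already at order $\eps^{1/2}$; the sign of the branch distinguishes $\mu_{\eps,j}^+$ from $\mu_{\eps,j}^-$.

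The hard part will be the bookkeeping that guarantees no eigenvalue is counted in two different families. The decisive observation is that the half-integer eigenvalues of (ii) sit at distance $\sim\eps^{1/2}$ from $\lambda_0$, whereas the integer-power eigenvalues of (i) and (iii) sit at distance $O(\eps)$; hence for $\eps$ small the $\eps^{1/2}$-windows of (ii) are disjoint from the $\eps$-window containing (i) and (iii), and the two sets cannot overlap. Within (i) and (iii) one must still allow for the possibility that a finite corrector $\nu_i$ coincides with some $\lambda_1^{(k)}$; but since (iii) only asserts existence of at most $K-d$ eigenvalues while (i) produces a countable family, one simply counts eigenvalues of the self-adjoint operator $T_\eps$ in $\eps$-neighbourhoods of each distinct corrector, exactly as in the final step of Theorem~\ref{ThmCaseII}, and the prescribed lower bounds on the number of eigenvalues follow without contradiction.
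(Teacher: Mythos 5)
Your proposal is correct and follows essentially the same route as the paper: the paper itself proves Theorem~\ref{ThmCaseIII} by remarking that, in view of Lemmas~\ref{LemmaQuasimodesTepsCaseIIIa}--\ref{LemmaQuasimodesTepsCaseIIIb}, the argument of Theorem~\ref{ThmCaseII} (Proposition~\ref{LemQuasimodes} plus the two-inequality separation of correctors) applies verbatim. Your extra bookkeeping about the $\eps^{1/2}$-scale branches not colliding with the $\eps$-scale ones is a sensible elaboration of a point the paper leaves implicit.
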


In this case, the generalized eigenspace $X_{\lambda_0}$ contains the Jordan chains. Note that
problem \eqref{EqV1/2}--\eqref{CouplingV1/2W1/2} for $(v_{1/2},w_{1/2})$ coincides up to the multiplier $\lambda_{1/2}$ in the right-hand side with  problem \eqref{RootEqV*}--\eqref{RootCoupling} for generalized eigenvectors. Inspecting the structure of $V^{(\pm)}_{\eps, j}$ more closely we  see that
\begin{equation*}
  V^{(\pm)}_{\eps, j}=U_j\pm\eps^{1/2} U^*_j+O(\eps), \text{ as } \eps\to 0,
\end{equation*}
where the vectors $U_j=(\beta_{0,j}\cdot\bV(x),0)$, $U^*_j=\big(v_{*,j}(x), -\omega_j^{-1}y(n)\, \beta_{0,j}\cdot\Psi(s)\big)$ form a Jordan chain of $\cP$ corresponding to $\lambda_0$. This observation has the following  geometric interpretation. The operator $T_\eps$ possesses  pairs of eigenvalues with the asymptotics $\mu_{\eps,j}^\pm=\lambda_0\pm\omega_j\eps^{1/2}+O(\eps)$ for which the corresponding normalized eigenfunctions $u_{\eps, j}^{(-)}$ and $u_{\eps, j}^{(+)}$
converge in $L_2(\Omega)$ to the same function $v_{0,j}=\beta_{0,j}\cdot\bV$, as $\eps\to 0$.
Although these eigenfunctions remain orthogonal for all $\eps$ in the weighted space $L_2(\rho_\eps,\Omega)$, they make an infinitely small angle between them in $L_2(\Omega)$ with the standard norm, and stick together at the limit. In particular, it leads to the loss of completeness in $L_2(\Omega)$ for the limit eigenfunction collection.
Interestingly enough, however, the plane  $\pi_{\eps, j}$ that is the span of $u_{\eps, j}^{(-)}$ and $u_{\eps, j}^{(+)}\}$  has regular  behaviour as $\eps\to 0$.
The limit position of $\pi_{\eps, j}$ is the 2-dimensional space $\pi_j$ spanned by the functions $\beta_{0,j}\cdot\bV$ and $v_{*,j}$.

We actually have an example of singular perturbations in which the completeness property of  perturbed eigenfunction collection passes in some sense into the completeness of generalized eigenfunctions  of the limit non-self-adjoint operator. Although we didn't justify the asymptotics of eigenfunctions of  \eqref{PEproblem}, we can formally state that the non-self-adjoint operator $\cP$  contains all the information about the asymptotic behaviour of eigenvalues of the perturbed problem:
\begin{itemize}
  \item[$\circ$]  the spectrum of  $\cP$  is a limit set for the spectra $\sigma(T_\eps)$ as $\eps\to 0$;
  \item[$\circ$]  knowing the multiplicity of eigenvalues of  $\cP$, we can divide $\sigma(T_\eps)$ into finite or infinite subsets of eigenvalues with the same limits as $\eps\to 0$;
  \item[$\circ$] in the case $\lambda\in \sigma(A)\cap\sigma(B)$, the dimension of space $H_\lambda$ indicates how many eigenvalues of $T_\eps$ possess the half-integer power asymptotics;
  \item[$\circ$] the Jordan chains of $\cP$ are involved in the quasimodes of $T_\eps$ (in the formal asymptotics of  eigenfunctions of $T_\eps$).
\end{itemize}

\begin{figure}[t]
  \centering
  \includegraphics[scale=0.9]{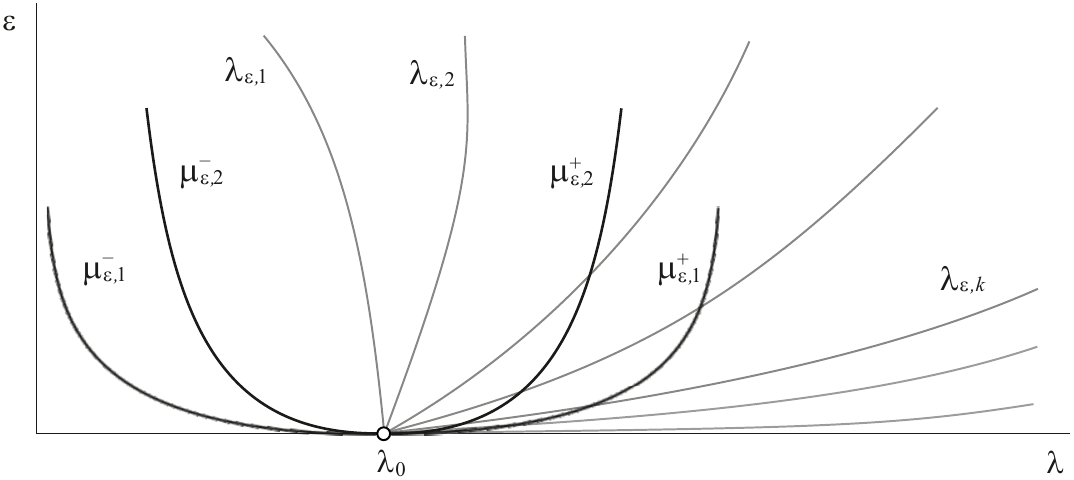}\\
  \caption{Bifurcation of the eigenvalue $\lambda_0\in \sigma(A)\cap\sigma(B)$.}\label{FigBifurcation}
\end{figure}

As we pointed out in the introduction,  the problem
\begin{equation}\label{PEproblemM3}
  - \Delta u_\eps = \lambda^\eps \rho_\eps u_\eps\quad\text{in }\Omega, \qquad  u_\eps=0 \quad \text{on } \partial\Omega
\end{equation}
with the perturbed density $\rho_\eps(x;m)=\rho(x)$ for $x\in \Omega\setminus \omega_\eps$ and $\rho_\eps(x;m)=\eps^{-m}q_\eps(x)$ for $x\in \omega_\eps$,
the Dirichlet boundary condition and  without the potential $a$ in the differential equation has been considered in \cite{GolovatyGomezLoboPerez2004}. The variational methods have been applied  to study the asymptotic behaviour of eigenvalues and eigenfunctions. In this case  the problem can be realized as the family of bounded self-adjoint operators $A_\eps$ in  $\mathring{W}^1_2(\Omega)=\{v\in W^1_2(\Omega)\colon u_\eps=0 \text{ on } \partial\Omega\}$. Due to the second representation theorem~\cite[Theorem~VI.2.23]{Kato},  $A_\eps$ is defined by the identity $\frak{a}(A_\eps u,\phi)=(\rho_\eps u,\phi)_{L_2(\Omega)}$ for all $\phi\in \mathring{W}^1_2(\Omega)$, because the Dirichlet form $\frak{a}(u,\phi)=\int_\Omega\nabla u\cdot \nabla\phi\,dx$ is a scalar product in $\mathring{W}^1_2(\Omega)$. Problem \eqref{PEproblemM3} can be written in the form $A_\eps u_\eps=\lambda_\eps^{-1} u_\eps$. For $m>1$ the operators $A_\eps$ diverge as $\eps\to 0$, and it has been shown that $\|A_\eps\|\leq c\eps^{1-m}$ with a constant $c$ being independent of~$\eps$.

The complete asymptotic analysis of  eigenvalues $\lambda_\eps$ of \eqref{PEproblemM3}  has been  carried out for $m=3$. We have proved  that the spectrum  consists of a countable number of infinite series of eigenvalues with the asymptotics
\begin{equation}\label{AsymptoticsM3}
\lambda^\eps_{kj}=\mu_k\eps+\nu_{kj}\eps^2+o(\eps^2)\quad \text{as } \eps\to 0, \quad k,j\in \mathbb{N},
\end{equation}
where $\mu_k$ is an infinite-fold eigenvalue of $B$ and $\{\nu_{kj}\}_{j=1}^\infty$ is a spectrum of pseudodifferential operator $N_{\mu_k}$ acting in $L_2(\gamma)$ (see formula \eqref{OperatorNlambda}). The operator $N_{\mu_k}$ is a self-adjoint realization of a boundary value problem for the Laplace operator in $\Omega$ with coupling conditions on $\gamma$.

Since the first eigenvalue $\mu_1$ of $B$ is equal to zero, the low-lying eigenvalues admit the asymptotics $\lambda^\eps_{1j}=\eps^2(\nu_{1j}+o(1))$, $\eps\to 0$. In this case, $\{\nu_{1j}\}_{j=1}^\infty$ are  eigenvalues of the spectral problem $-\Delta u=\nu q_0\delta_\gamma u$ in $\Omega$, $u=0$ on $\partial\Omega$,
where $q_0=\int_{-1}^1q(r)\,dr$ and $\delta_\gamma$  is a distribution in $\mathcal{D}'(\Omega)$ acting as $\langle \delta_\gamma, \phi\rangle=\int_\gamma \phi\,d \gamma$ for any $\phi\in C^\infty(\Omega)$.
The problem describes the eigenvibrations of a  membrane with the mass density $\rho_0(x)=q_0\delta_\gamma(x)$, i.e., the whole mass of the membrane is supported on $\gamma$ and the rest part of vibrating system is massless. Such series of infinitesimal eigenvalues with the asymptotics   $\lambda^\eps_{1j}=\eps^{m-1}(\nu_{1j}+o(1))$, $\eps\to 0$,  exists for any $m>1$. Hence, the estimate $\|A_\eps\|\leq c\eps^{1-m}$ is sharp. The similar result is obtained in Theorems \ref{ThmCaseII} and \ref{ThmCaseIII}  for problem \eqref{PEproblem} when  $m=2$ (see Corollary~\ref{CorLFV}).

In \cite{GolovatyGomezLoboPerez2004} some results on the convergence of eigenfunctions have been also obtained.  Interestingly enough, the complete asymptotic description of $\lambda^\eps$ and $u_\eps$ in \eqref{PEproblemM3} includes not only the operator $B$ but also the operator $A$ which is associated with the  problem $-\Delta u= \lambda u$ in $\Omega\setminus\gamma$,  $u=0$ on $\partial\Omega$ and $\gamma$ in this case. In view of \eqref{AsymptoticsM3}, any point of the positive spectral half-line is an accumulation point for eigenvalues $\lambda^\eps$, i.e., for each $\lambda>0$ there exists a sequence $\{\lambda^{\eps_i}\}_{i=1}^\infty$ of eigenvalues such that $\lambda^{\eps_i}\to \lambda$ as $\eps_i\to 0$.  We have proved that only the points of $\sigma(A)$ can be approximated by  eigenvalues $\lambda^{\eps_i}$ so that the corresponding eigenfunctions $u_{\eps_i}$ converge to  nontrivial limits in $L^2(\Omega)$. These limits are eigenfunctions of $A$ corresponding to $\lambda$.

\begin{cor}\label{CorLFV}
Problem \eqref{PEproblem} has a series of small eigenvalues that admit the asymptotics $\lambda_{\eps,k}=\eps\nu_k+O(\eps^2)$ as  $\eps\to 0$,
where $\nu_k$  are eigenvalues of the problem
\begin{equation}\label{ProblemWithDeltaM2}
        -\Delta v+av=0  \text{ in }\Omega\setminus\gamma,\quad  \ell v=0\text{ on } \partial\Omega,\quad [v]_\gamma=0, \;\; [\partial_rv]_\gamma+\nu q_0 v=0 \text{ on } \gamma
\end{equation}
with the spectral parameter $\nu$ in the coupling conditions. In addition, if $0\in\sigma(A)$, then \eqref{PEproblem} has also a finite number of small eigenvalues with asymptotics
\begin{equation*}
\mu_{\eps,j}^\pm=\pm\omega_j\sqrt{\eps}+O(\eps) \text{ as } \eps\to 0, \quad j=1,\dots,d.
\end{equation*}
\end{cor}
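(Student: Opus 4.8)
The plan is to apply the two main theorems of Sections~\ref{SectCaseII} and~\ref{SectCaseIII} to the distinguished eigenvalue $\lambda_0=0$ of the limit operator $\cP$. By Remark~\ref{remZeroP}, zero always lies in $\sigma(B)$ and is in fact its smallest, hence infinite-fold, eigenvalue; the corresponding Sturm-Liouville problem \eqref{SturmLiouville} reduces to $y''=0$ with $y'(\pm1)=0$, whose normalized solution in $L_2(q,(-1,1))$ is the constant $y\equiv q_0^{-1/2}$, where $q_0=\int_{-1}^1q\,dr$. If $0\notin\sigma(A)$ we are in the setting of Theorem~\ref{ThmCaseII}, while if $0\in\sigma(A)$ we invoke Theorem~\ref{ThmCaseIII}. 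In either case the quasimode construction produces small eigenvalues of $T_\eps$ accumulating at $0$, so the only substantive task is to identify the first-order correctors with the eigenvalues of \eqref{ProblemWithDeltaM2}.

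The key step is to compute the corrector operator $N_0$ from \eqref{OperatorNlambda} and to recognize its spectral equation as the coupling in \eqref{ProblemWithDeltaM2}. Since $y$ is constant, $\theta_0^-=\theta_0^+=y^2(\pm1)=1/q_0$, so the curvature term $\tfrac12(\theta_0^{+}-\theta_0^{-})\kappa$ drops out entirely and $N_0=q_0^{-1}(N^-_0+N^+_0)$. Letting $v$ solve $-\Delta v+av=0$ in $\Omega\setminus\gamma$ with $\ell v=0$ on $\partial\Omega$ and common Dirichlet datum $v|_\gamma=b_0$, the definitions of $N^\pm_0$ give $N^-_0 b_0=\partial_r v^-$ and $N^+_0 b_0=-\partial_r v^+$, whence $(N^-_0+N^+_0)b_0=\partial_r v^--\partial_r v^+=-[\partial_r v]_\gamma$. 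Therefore the spectral equation $N_0 b_0=\nu\,b_0$ is equivalent to $[v]_\gamma=0$ together with $[\partial_r v]_\gamma+\nu q_0 v=0$ on $\gamma$, which is exactly problem \eqref{ProblemWithDeltaM2}. Hence the correctors $\lambda_1^{(k)}$ supplied by Theorem~\ref{ThmCaseII} coincide with the numbers $\nu_k$, and the associated eigenvalues of $T_\eps$ satisfy $\lambda_{\eps,k}=\eps\nu_k+O(\eps^2)$.

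For the second assertion I would specialize Theorem~\ref{ThmCaseIII}(ii) to $\lambda_0=0$: the hypothesis $0\in\sigma(A)$ forces $d=\dim H_0\ge 1$, the Gram matrix $G_0$ of $\Psi_1,\dots,\Psi_K$ then has $d$ positive eigenvalues $\omega_1^2,\dots,\omega_d^2$, and the theorem yields $2d$ eigenvalues of $T_\eps$ with $\mu_{\eps,j}^\pm=\lambda_0\pm\eps^{1/2}\omega_j+O(\eps)=\pm\omega_j\sqrt{\eps}+O(\eps)$. The only genuinely new work is the identification of the middle paragraph; everything else is a direct reading of the main theorems. I expect the main (though modest) obstacle to be precisely that identification --- in particular, ensuring that the one-sided conventions for $\partial_r v^\pm$ and the minus sign built into $N^+_0$ combine to give $-[\partial_r v]_\gamma$ rather than $+[\partial_r v]_\gamma$ --- and, when $0\in\sigma(A)$, checking that replacing $N_0$ by its compression $M_0=(I-P_0)N_0(I-P_0)$ in Theorem~\ref{ThmCaseIII}(i) does not alter the limiting transmission problem, since the subspace $H_0$ removed by $P_0$ is exactly the solvability obstruction that \eqref{ProblemWithDeltaM2} already carries.
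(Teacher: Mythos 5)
Your proposal is correct and follows essentially the same route as the paper: existence comes from Theorems~\ref{ThmCaseII} and~\ref{ThmCaseIII} applied to $\lambda_0=0\in\sigma(B)$ (Remark~\ref{remZeroP}), and the identification of the correctors $\nu_k$ is the same computation --- the paper reads it off directly from the compatibility condition \eqref{CompactibilityW1} with $y\equiv 1$ and $w_0=b(s)$, while you phrase it through $N_0=q_0^{-1}(N^-_0+N^+_0)$, which is just \eqref{CompactibilityW1} rewritten via \eqref{OperatorNlambda}. Your sign bookkeeping $(N^-_0+N^+_0)b_0=\partial_r v^--\partial_r v^+=-[\partial_r v]_\gamma$ is right, and your closing remark about $M_0$ versus $N_0$ when $0\in\sigma(A)$ flags a fine point the paper itself does not elaborate.
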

\begin{proof}
The existence of such eigenvalues follows from Theorems \ref{ThmCaseII} and \ref{ThmCaseIII} along with the observation that $\lambda_0=0$ belongs to $\sigma(B)$ (see Remark~\ref{remZeroP}). It remains to derive the eigenvalue problem for $\nu_k$.
Let us look at  \eqref{P0EqV}--\eqref{P0Coupling} and put $\lambda=0$:
\begin{gather*}
        -\Delta v+av=0 \;\;\text{in }\Omega\setminus\gamma,\quad \ell v=0\;\;\text{on } \partial\Omega,\quad v^-=w(\,\cdot\,,-1),\quad v^+=w(\,\cdot\,,1),\;\; \text{on }\gamma,\\
         \partial^2_n w=0   \:\text{ in } \omega,\quad
        \partial_n w(\,\cdot\,,-1)=0,\;\: \partial_n w(\,\cdot\,,1)=0,
\end{gather*}
We see that $w=b(s)$  and  hence  $v^-=v^+$, i.e., $v$ is continuous on $\gamma$.  Since $y=1$ in this case, condition \eqref{CompactibilityW1} for $v_0=v$  reads as $\partial_r v^+-\partial_r v^-+ \lambda_1 q_0 b=0$ on $\gamma$. To complete the proof we replace $\lambda_1$ with $\nu$ and recall that $b=v|_{\gamma}$.
Note that problem \eqref{ProblemWithDeltaM2} can be written in the form
\begin{equation*}
        -\Delta v+av=\nu q_0\delta_\gamma v  \text{ in }\Omega,\quad  \ell v=0\text{ on } \partial\Omega.
\end{equation*}
Hence, $\nu_k$  are eigenvalues of a membrane with the mass density $\rho_0(x)=q_0\delta_\gamma(x)$ supported by the curve $\gamma$.
\end{proof}

\section{Asymptotics of eigenvalues  in the case $\lambda_0\in\sigma(A)\setminus\sigma(B)$ }\label{SectCaseI}
For the sake of completeness, we briefly discuss the perturbation of eigenvalue $\lambda_0$ of $\cP$ which does not belong to $\sigma(B)$.
In view of part \textit{(i)} of Theorem~\ref{TheoremSpectrumStructure}, $\lambda_0$ is an  eigenvalue of finite multiplicity. Also, we have $v_0=\alpha_0\cdot\bV$ and $w_0=0$ in asymptotics \eqref{AsympExUI}, where $\alpha_0\in \Real^K\setminus\{0\}$.
Then \eqref{EqW1new} and \eqref{NeumannCondW1} imply
\begin{equation*}
  -\partial^2_n w_1=\lambda_0 qw_1 \;\;\text{in } \omega,\quad
  \partial_n w_1(\,\cdot\,,-1)=\alpha_0\cdot\partial_r \bV^-,\;\;  \partial_n w_1(\,\cdot\,,1)=\alpha_0\cdot\partial_r \bV^+.
\end{equation*}
Since $\lambda_0\not\in \sigma(B)$, there exists a unique solution of the problem. Suppose the functions $W_k$, $k=1,\dots,K$, solve the problems
\begin{equation*}
  -\partial^2_n w=\lambda_0 qw \;\;\text{in } \omega,\quad
  \partial_n w(\,\cdot\,,-1)=\partial_r V_k^-,\;\;  \partial_n w_1(\,\cdot\,,1)=\partial_r V_k^+,
\end{equation*}
and $\bW=(W_1,\dots,W_K)$. Thus we have $w_1=\alpha_0\cdot\bW$. Next, we can rewrite \eqref{EqV1} and \eqref{CouplingV1W1} in the form
\begin{align}\label{v1finite}
        &-\Delta v_1+av_1=\lambda_0 \rho v_1+\lambda_1 \rho \,\alpha_0\cdot\bV\;\;\text{in }\Omega\setminus\gamma,\quad \ell v_1=0\;\;\text{on } \partial\Omega,\\\label{condv1finite}
         &\quad v_1^-=\alpha_0\cdot(\bW(\,\cdot\,,-1)+\partial_r \bV^-),\quad    v_1^+=\alpha_0\cdot(\bW(\,\cdot\,,1)-\partial_r \bV^+).
\end{align}
In general, the problem is unsolvable, because  $\lambda_0$ belongs to the spectrum of $A$ which is the direct sum of $A_-$ and $A_+$. To achieve the solvability one needs to choose
 proper vectors $\alpha_0$ along with the parameter $\lambda_1$. Multiplying the equation in \eqref{v1finite} by $V_1,\dots,V_K$ in turn and integrating by parts
twice in view of the boundary conditions \eqref{condv1finite} yield the spectral matrix equation $R\alpha_0=\lambda_1\alpha_0$, where the matrix $R=(r_{ij})_{i,j=1}^K$ has the entries
\begin{equation*}
  r_{ij}=\int_\gamma \left(
  (W_i(\,\cdot\,,1)-\partial_r V_i^+)\partial_r V_j^+
  +(W_i(\,\cdot\,,-1)+\partial_r V_i^-)\partial_r V_j^-
  \right)\,d\gamma.
\end{equation*}
The matrix $R$ is symmetric, because  it is easy to check that
\begin{equation*}
\int_\gamma W_i(\,\cdot\,,\pm1)\partial_r V_j^\pm\,d\gamma=\int_\gamma W_j(\,\cdot\,,\pm1)\partial_r V_i^\pm\,d\gamma.
\end{equation*}
Assume that $R$ has $K$ simple eigenvalues $\lambda_1^{(1)},\dots,\lambda_1^{(K)}$ with the corresponding eigenvalues $\alpha_0^{(1)},\dots,\alpha_0^{(K)}$.
For any pair $(\lambda_1^{(k)}, \alpha_0^{(k)})$, we can solve \eqref{v1finite}, \eqref{condv1finite} and find  $v_1^{(k)}=\hat{v}_1^{(k)}+\alpha_1^{(k)}\cdot\bV$ up to the vector $\alpha_1^{(k)}\in\Real^K$. We continue in this fashion obtaining $K$ different quasimodes with high enough accuracy of  the operator $T_\eps$  that approximate the part of spectrum lying in a vicinity of  $\lambda_0$.

\begin{thm}
  Suppose that $\lambda_0$ is an eigenvalue of the limit operator $\cP$ such that $\lambda_0\in\sigma(A)\setminus \sigma(B)$.
Assume $\lambda_0$  has multiplicity $K$ and  the matrix $R$ possesses the simple eigenvalues $\lambda_1^{(1)},\dots,\lambda_1^{(K)}$.
Then there exist $K$ eigenvalues in the spectrum of $T_\eps$ that
converge to $\lambda_0$  and admit the asymptotics $\lambda_{\eps,k}=\lambda_0+\eps\lambda_1^{(k)}+O(\eps^2)$ as $\eps\to 0$, $k=1,\dots,K$.
\end{thm}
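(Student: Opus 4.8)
The plan is to follow verbatim the scheme of Theorem~\ref{ThmCaseII}: attach to each eigenpair $(\lambda_1^{(k)},\alpha_0^{(k)})$ of the matrix $R$ a formal quasimode of $T_\eps$, and then extract a genuine eigenvalue by the quasimode Proposition~\ref{LemQuasimodes}. The leading-order work is already done in the discussion preceding the statement: with $w_0=0$ and $v_0=\alpha_0\cdot\bV$, solvability of \eqref{v1finite}, \eqref{condv1finite} reduces to the symmetric spectral equation $R\alpha_0=\lambda_1\alpha_0$. Fixing a pair $(\lambda_1^{(k)},\alpha_0^{(k)})$ thus pins down the first two terms $v_0^{(k)}=\alpha_0^{(k)}\cdot\bV$ and $w_1^{(k)}=\alpha_0^{(k)}\cdot\bW$ of the expansion \eqref{AsympExUI}.

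First I would continue the recursion to all coefficients of $\Lambda_\eps^{(k)}=\lambda_0+\eps\lambda_1^{(k)}+\eps^2\lambda_2^{(k)}+\eps^3\lambda_3^{(k)}$ and of the corresponding approximation $\hat u_\eps^{(k)}$ of the form \eqref{hatUeps}. The structural simplification relative to Section~\ref{SectCaseIII} is that $\lambda_0\notin\sigma(B)$, so at each order the inclusion component $w_j$ is \emph{uniquely} recovered from the lower-order data by inverting $B-\lambda_0$, and no half-integer powers occur. The only obstruction lives in the membrane equations, governed by $A-\lambda_0$ with the $K$-dimensional kernel $\spn\{V_1,\dots,V_K\}$. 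Applying the Fredholm alternative (Proposition~\ref{PropSolvabilityV*}) at order $j$ produces a finite system $(R-\lambda_1^{(k)}I)\alpha=f_j$ on $\Real^K$ with right-hand side built from the already-constructed terms; its solvability condition fixes $\lambda_{j+1}^{(k)}$, and simplicity of the spectrum of $R$ yields a unique $\alpha$ orthogonal to $\alpha_0^{(k)}$, fixing the projection of $v_j^{(k)}$ onto the eigenspace. This is the exact analogue of step \eqref{Nz1-lz1=f1}, with $R$ playing the role of $N_{\lambda_0}$.

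Next I would pass from the formal series to admissible test functions. As in \eqref{EtaCorrector}, the jumps of $\hat u_\eps^{(k)}$ and $\partial_r\hat u_\eps^{(k)}$ across $\partial\omega_\eps$ are $O(\eps^3)$ by construction, so subtracting the corrector $\eta_\eps^{(k)}$ gives $U_\eps^{(k)}=\hat u_\eps^{(k)}-\eta_\eps^{(k)}\in\dom T_\eps$ without disturbing the leading behaviour, thanks to \eqref{EtaEpsEstimate}. The quasimode estimate then proceeds exactly as in Lemma~\ref{LemmaQuasimodesTeps}; the only point deserving attention is the normalization. Since here $w_0=0$, the mass of $U_\eps^{(k)}$ is carried by the membrane term $v_0^{(k)}=\alpha_0^{(k)}\cdot\bV$ rather than concentrated in $\omega_\eps$, so $\|U_\eps^{(k)}\|_{L^2(\rho_\eps,\Omega)}$ stays bounded away from zero and the normalizing factor tends to a positive constant — the situation of Lemma~\ref{LemmaQuasimodesTepsCaseIII0} rather than Lemma~\ref{LemmaQuasimodesTeps}. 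Combining the residual bound with this normalization shows, as in \eqref{EstQuasimodes}, that $(\Lambda_\eps^{(k)},\|U_\eps^{(k)}\|^{-1}U_\eps^{(k)})$ is a quasimode of $T_\eps$ with accuracy $O(\eps^2)$.

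Finally, Proposition~\ref{LemQuasimodes} — applicable since $\sigma(T_\eps)$ is discrete — provides for each $k$ an eigenvalue $\lambda_{\eps,k}$ of $T_\eps$ with $|\lambda_{\eps,k}-\Lambda_\eps^{(k)}|\le c\eps^2$, hence the stated asymptotics $\lambda_{\eps,k}=\lambda_0+\eps\lambda_1^{(k)}+O(\eps^2)$. To see these are $K$ \emph{distinct} eigenvalues, I would reuse the separation argument of Theorem~\ref{ThmCaseII}: were one eigenvalue to satisfy the estimate for two indices $i\ne k$, adding the two inequalities would force $|\lambda_1^{(i)}-\lambda_1^{(k)}|\le 2c\eps$, contradicting simplicity of the spectrum of $R$ for small $\eps$. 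I expect the main obstacle to be purely algebraic rather than analytic: verifying that the recursion genuinely closes at every order, i.e.\ that each finite-dimensional system $(R-\lambda_1^{(k)}I)\alpha=f_j$ is consistent. This is precisely where the hypothesis that $R$ has simple eigenvalues is indispensable, exactly as simplicity of $\sigma(N_{\lambda_0})$ was used in Section~\ref{SectCaseII}.
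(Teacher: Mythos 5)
Your proposal is correct and follows essentially the same route as the paper: the leading-order reduction to the symmetric matrix equation $R\alpha_0=\lambda_1\alpha_0$, the higher-order recursion closed by inverting $B-\lambda_0$ on the inclusion and applying the Fredholm alternative (with $R-\lambda_1^{(k)}I$ playing the role of $N_{\lambda_0}-\lambda_1^{(k)}I$) on the membrane, the jump-correction $\eta_\eps^{(k)}$, Proposition~\ref{LemQuasimodes}, and the separation argument of Theorem~\ref{ThmCaseII}. The paper compresses all of this into the sentence ``we continue in this fashion obtaining $K$ different quasimodes with high enough accuracy,'' and your write-up supplies exactly the intended details, including the correct observation that with $w_0=0$ the normalization behaves as in Lemma~\ref{LemmaQuasimodesTepsCaseIII0} rather than Lemma~\ref{LemmaQuasimodesTeps}.
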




\begin{thebibliography}{1}

\bibitem{Sarpkaya2010}
 T. Sarpkaya (2010). Wave forces on offshore structures. Cambridge University Press.

\bibitem{GantmakherKrein1950}
 F. R. Gantmakher, M. G.  Krein (1950). Oscillation matrices and kernels and small oscillations of mechanical systems. Gostekhizdat, Moscow; English transl., American Mathematical Soc., 2002.


\bibitem{BatesFredrickson1990}
Bates, F. S.,  Fredrickson, G. H. (1990). Block copolymer thermodynamics: theory and experiment. Annual review of physical chemistry, 41(1), 525-557.

\bibitem{SensTurner1997}
Sens, P.,  Turner, M. S. (1997). Inclusions in Thin Smectic Films. Journal de Physique II, 7(12), 1855-1870.

\bibitem{PratibhaParkSmalyukh2010}
Pratibha, R., Park, W., Smalyukh, I. I. (2010). Colloidal gold nanosphere dispersions in smectic liquid crystals and thin nanoparticle-decorated smectic films. Journal of Applied Physics, 107(6), 063511.

\bibitem{LadbrookeChapman1969}
Ladbrooke, B. D.,  Chapman, D. (1969). Thermal analysis of lipids, proteins and biological membranes a review and summary of some recent studies. Chemistry and Physics of Lipids, 3(4), 304-356.

\bibitem{MyFirstPaperOfSP}
E. S{\'a}nchez-Palencia(1984). Perturbation of eigenvalues in thermoelasticity and vibration of systems with concentrated masses. In Trends and applications of pure mathematics to mechanics (pp. 346-368). Springer, Berlin, Heidelberg.



\bibitem{PalenciaHubertBook}
J.~Sanchez~Hubert and E.~S{\'a}nchez-Palencia. Vibration and coupling of continuous systems.
 Springer-Verlag, Berlin, 1989.



\bibitem{GolovatyNazarovOleinikSoboleva1988}
Yu.~D. Golovatyi, S.~A. Nazarov, O.~A. Oleinik, and T.~S. Soboleva.
 Natural oscillations of a string with an additional mass.
 {\em Sibirsk. Mat. Zh.}, 29(5):71--91, 237, 1988.


\bibitem{LoboPerez2003}
M. Lobo,  E. P\'{e}rez (2003). Local problems for vibrating systems with concentrated masses: a review. Comptes Rendus M\'{e}canique,
V. 331(4), pp. 303-317.


\bibitem{GolovatyjNazarovOleinik1990}
Yu. D. Golovatyi, S. A. Nazarov, O. A.  Oleinik  (1990). Asymptotic expansions of eigenvalues and eigenfunctions in problems on oscillations of a medium with concentrated perturbations. Trudy Matematicheskogo Instituta imeni VA Steklova, 192, 42-60.



\bibitem{GolovatyTrudy1992}
Yu.~D. Golovatyi.
Spectral properties of oscillatory systems with added masses.
 Trudy Moskov. Mat. Obshch., 54:29--72, 278, 1992.


\bibitem{GolovatyLavrenyuk2000}
 Yu. D. Golovaty,  A. S. Lavrenyuk (2000). Asymptotic expansions of local eigenvibrations for plate with density perturbed in neighbourhood of one-dimensional manifold. Matematychni Studii, 13(1), 51-62.


\bibitem{MelnikNazarov2001}
T. A. Mel'nik, S. A. Nazarov, Asymptotic analysis of the Neumann problem on the junction of a body and thin heavy rods, Algebra i Analiz, 12:2 (2000), 188–238; St. Petersburg Math. J., 12:2 (2001), 317–351

\bibitem{Melnik2001}
T. A. Mel'nik
Vibrations of a thick periodic junction with concentrated masses.
Mathematical Models and Methods in Applied Sciences, (2001) 11:06, 1001-1027.

\bibitem{ChechkinMelnyk2012}
G. A. Chechkin, T. A. Mel'nyk (2012) Asymptotics of eigenelements to spectral problem in thick cascade junction with concentrated masses, Applicable Analysis, 91:6, 1055-1095.


\bibitem{GolovatyHrabchak2007}
 Yu. Golovaty, H.  Hrabchak (2007). Asymptotics of the spectrum of a Sturm-Liouville operator on networks with perturbed density. \textit{Visnyk Lviv. Univ., Ser. Mech. Math.} Vol. 67, 66-83.


\bibitem{GolovatyHrabchak2010}
Yu. Golovaty, H.  Hrabchak  (2010). On Sturm-Liouville problem on starlike graphs with ``heavy'' nodes. \textit{Visnyk Lviv. Univ., Ser. Mech. Math.} Vol. 72, 63-78.


\bibitem{OleinikShamaevYosifianBook}
O. A. Oleinik, A. S. Shamaev, G. A. Yosifian, Mathematical Problems in Elasticity and Homogenization, North–Holland, Amsterdam, 1992.

\bibitem{CastroZuazuaSIAM2000}
Castro, C.,  Zuazua, E. (2000). Low frequency asymptotic analysis of a string with
rapidly oscillating density. SIAM Journal on Applied Mathematics, 60(4), 1205-1233.

\bibitem{CastroZuazuaEJAM2000}
C. Castro,  E. Zuazua (2000). High frequency asymptotic analysis of
a string with rapidly oscillating density. European Journal of Applied Mathematics, 11(6), 595-622.


\bibitem{Rybalko2002}
 V. Rybalko (2002). Vibrations of elastic systems with a large number of tiny heavy inclusions. Asymptotic Analysis, 32(1), 27-62.


\bibitem{LoboPerez1993}
M. Lobo and E. Perez, On vibrations of a body with many concentrated masses near the boundary, Math. Models Methods Appl. Sci. 3 (1993) 249-273.


\bibitem{LoboPerezSA1995}
M. Lobo and E. Perez, Vibrations of a body with many concentrated masses near the boundary: High frequency vibrations, Spectral Analysis of Complex Structures (Hermann, 1995) pp. 85-101.


\bibitem{LoboPerez1995}
M. Lobo and E. Perez (1995). Vibrations of a membrane with many concentrated masses near the boundary. Mathematical Models and Methods in Applied Sciences, 5(5), 565-585.

\bibitem{Chechkin2004}
Chechkin, G. A. (2004). Estimation of solutions of boundary-value problems in domains with concentrated masses located periodically along the boundary: Case of light masses. Mathematical Notes, 76(5-6), 865-879.

\bibitem{Chechkin2005}
 G. A. Chechkin, (2005). Asymptotic expansions of eigenvalues and eigenfunctions of an elliptic operator in a domain with many ``light'' concentrated masses situated on the boundary. Two-dimensional case. Izvestiya: Mathematics, 69(4), 805.



\bibitem{NazarovPerez2009}
S. A. Nazarov, E. Perez,
New asymptotic effects for the spectrum of problems on concentrated masses near the boundary. Comptes Rendus M\'{e}canique,
V. 337, Is. 8, 2009, pp 585-590.


\bibitem{ChechkinPerezYablokova2005}
G. A. Chechkin, M. E. P\'{e}rez,  E. I. Yablokova (2005). Non-periodic boundary homogenization and ``light'' concentrated masses. Indiana University Mathematics Journal, 321-348.

\bibitem{Yablokova2005}
Yablokova, E. I. (2005). On vibrations of bodies with concentrated masses near boundaries. Journal of Mathematical Sciences, 129(1), 3688-3695.



\bibitem{GomezNazarovPerez2006}
G\'{o}mez, D., Lobo, M., Nazarov, S. A.,  P\'{e}rez, E. (2006). Asymptotics for the spectrum of the Wentzell problem with a small parameter and other related stiff problems. Journal de mathématiques pures et appliquées, 86(5), 369-402.

\bibitem{GomezNazarovPerez2006.1}
G\'{o}mez, D., Lobo, M., Nazarov, S. A.,  P\'{e}rez, E. (2006). Spectral stiff problems in domains surrounded by thin bands: Asymptotic and uniform estimates for eigenvalues. Journal de mathématiques pures et appliquées, 85(4), 598-632.


\bibitem{NazarovPerez2018}
Nazarov, S. A.,  Pérez, M. E. (2018). On multi-scale asymptotic structure of eigenfunctions in a boundary value problem with concentrated masses near the boundary. Revista Matemática Complutense, 31(1), 1-62.



\bibitem{GomezNazarovPerez2021}
G\'{o}mez, D., Nazarov, S. A.,  P\'{e}rez-Mart\'{i}nez, M. E. (2021). Localization effects for Dirichlet problems in domains surrounded by thin stiff and heavy bands. Journal of Differential Equations, 270, 1160-1195.


\bibitem{RivaProvenzano2018}
Riva, M. D.,  Provenzano, L. (2018). On vibrating thin membranes with mass concentrated near the boundary: an asymptotic analysis. SIAM Journal on Mathematical Analysis, 50(3), 2928-2967.


\bibitem{GolovatyGomezPerezLoboCR2002}
Yu. Golovaty, D. G\'omez, M. Lobo, and E. P\'erez. Asymptotics for the eigenelements of vibrating membranes with very
  heavy thin inclusions. C. R., M\'ec., Acad. Sci. Paris, 330(11):777-782, 2002.


\bibitem{GolovatyGomezLoboPerez2004}
Yu.~D. Golovaty, D.~G{\'o}mez, M.~Lobo, and E.~P{\'e}rez.
On vibrating membranes with very heavy thin inclusions.
 Math. Models Methods Appl. Sci., 14(7):987--1034, 2004.


\bibitem{Golovaty2020}
Yu. Golovaty (2020). On spectrum of strings with $\delta'$-like perturbations of mass density.  Visnyk Lviv. Univ., Ser. Mech. Math.,   Vol. 89, 60--79.


\bibitem{MelnikConvergence2001}
 T. A. Mel'nyk (2001). Hausdorff convergence and asymptotic estimates of the spectrum of a perturbed operator. Zeitschrift f\"{u}r Analysis und ihre Anwendungen, 20(4), 941-957.

\bibitem{MugnoloNittkaPost2010}
D. Mugnolo, R. Nittka, O. Post (2010). Convergence of sectorial operators on varying Hilbert space. Operators and Matrices, Vol.7(4), 955-995.

\bibitem{GohbergKrein1978}
 I. Gohberg, M. G. Kre\v{\i}n. Introduction to the theory of linear nonselfadjoint operators (Vol. 18). American Mathematical Soc., 1978.

\bibitem{BrambleOsborn1973}
J. H. Bramble, J. E. Osborn (1973). Rate of convergence estimates for nonselfadjoint eigenvalue approximations. Mathematics of Computation, 27(123), 525-549.

\bibitem{Kato} T.~Kato,
    {Perturbation Theory for Linear Operators}, 2nd edition,
    {Grundlehren der Mathematischen Wissenschaften}, Band 132,
    Springer-Verlag, Berlin--New York, 1976.

\bibitem{ArendtMazzeo}
W. Arendt and R. Mazzeo.
Spectral properties of the Dirichlet-to-Neumann operator on
  Lipschitz domains. Ulmer Seminare Heft, 12:23-38, 2007.

\bibitem{PDEVinitiSpringer}
Fedoruyk M.V., Babich V.M., Lazutkin, V.F., \dots \& Vainberg, B. R.
(1999). Partial Differential Equations V: Asymptotic Methods for
Partial Differential Equations (Vol. 5). Springer Science \&
Business Media.

\end{thebibliography}
\end{document}